\definecolor{webgreen}{rgb}{0,.5,0}
\definecolor{webbrown}{rgb}{.8,0,0}
\definecolor{emphcolor}{rgb}{0.95,0.95,0.95}
\ifpdf \hypersetup{pdftex,
            pdfstartview=FitH, 
            bookmarksopen=true,
            bookmarksnumbered=true
} \else \hypersetup{dvips} \fi
\newcommand {\ud}{{\rm d}}
\newcommand {\B}{\mathcal{B}}
\numberwithin{equation}{section}
\newtheorem{theorem}{Theorem}[section]
\newtheorem{remark}{Remark}[section]
\newtheorem{lemma}{Lemma}[section]
\numberwithin{remark}{section} \numberwithin{proposition}{section}
\numberwithin{corollary}{section}
\newcommand {\R}{\mathbb{R}}
\newcommand {\p}{\mathbb{P}}
\newcommand {\E}{\mathbb{E}}
\newcommand{\diff}{{\rm d}}
\newcommand{\lev}{L\'{e}vy }
\newcommand{\e}{\mathbb{E}}
\begin{document}
\title[Periodic barrier strategies for a spectrally positive L\'evy process]{On the optimality of periodic barrier strategies for a spectrally positive L\'evy process}

\thanks{This version: \today.  }
\author[J. L. P\'erez]{Jos\'e-Luis P\'erez$^*$}
\thanks{$*$\, Department of Probability and Statistics, Centro de Investigaci\'on en Matem\'aticas A.C. Calle Jalisco s/n. C.P. 36240, Guanajuato, Mexico. Email: jluis.garmendia@cimat.mx.  }
\author[K. Yamazaki]{Kazutoshi Yamazaki$^\dag$}
\thanks{$\dag$\, (corresponding author) Department of Mathematics,
Faculty of Engineering Science, Kansai University, 3-3-35 Yamate-cho, Suita-shi, Osaka 564-8680, Japan. Email: kyamazak@kansai-u.ac.jp.   }
\date{}

\maketitle

\begin{abstract}
We study the optimal dividend problem in the dual model where dividend payments can only be made at the jump times of an independent Poisson process. In this context, Avanzi et al.\ \cite{ATW} solved the case with i.i.d.\ hyperexponential jumps; they showed the optimality of a (periodic) barrier strategy where dividends are paid at dividend-decision times if and only if the surplus is above some level. In this paper, we generalize the results for a general spectrally positive \lev process with additional terminal payoff/penalty at ruin, and also solve the case with classical bail-outs so that the surplus is restricted to be nonnegative. The optimal strategies as well as the value functions are concisely written in terms of the scale function. Numerical results are also given.
\\
\noindent \small{\noindent  AMS 2010 Subject Classifications: 60G51, 93E20, 91B30 \\ 
JEL Classifications: C44, C61, G24, G32, G35 \\
\textbf{Keywords:} dividends; capital injection; \lev processes; scale functions; dual 
model.
}\\
\end{abstract}

%
%



\section{Introduction}

In risk theory,  the model of periodic payments has drawn much attention recently. 
While a majority of the existing continuous-time models assume that dividends can be made at all times and instantaneously, in reality dividend decisions can only be made at some intervals.  Solving the optimal dividend problem under periodic payments is in general difficult.  However, thanks to the recent developments of the fluctuation theory, in particular, of \lev processes, it is getting more tractable.


In this paper, we consider the optimal dividend problem under the constraint that dividend payments can only be made at the jump times of an independent Poisson process.  We focus on the dual model (or the spectrally positive \lev model), which is known to be an appropriate model for a company driven by inventions or discoveries (see, e.g.\ \cite{AGS2007, AGS2008, ASW2011, BKY, BKY2, LLZ2016, MP2016, YSP2, ZWYC2016}). In this context, Avanzi et al.\ \cite{ATW} solved the case with i.i.d.\ hyperexponential jumps. Our objective is to generalize their results for a general spectrally positive \lev process with a terminal payoff (penalty) at ruin, and also solve its extension with classical bail-outs so that the surplus is restricted to be nonnegative uniformly in time.   
Recently, Zhao et al.\ \cite{ZCY} studied similar problems where they consider the case with no terminal payoff (penalty) at ruin but with fixed cost for capital injection. For a related problem with Parisian delay, see, among others, \cite{CzarnaPalmowski}.

In order to solve the problem, we use the recent results given in Avram et al.\  \cite{APY}.  As has been already confirmed in \cite{ATW}, the \emph{periodic barrier strategy} is expected to be optimal. Namely, at each dividend-decision time, dividends are paid if and only if the surplus is above some barrier and then it is pushed down to the barrier. The resulting surplus process becomes the dual of the \emph{Parisian-reflected process} considered in  \cite{APY}.  Therefore the expected net present value (NPV) of dividends can be computed concisely using the scale function, which enables one to follow the classical ``\emph{guess and verify}'' technique described below:
\begin{enumerate}
\item In the guessing step, the candidate barrier level $b^*$ is first chosen. Proceeding like in the existing literature (see, e.g., \cite{APP2007, BKY, BKY2, BLS, HPY}), $b^*$ (if strictly positive) is set so that the value function becomes ``smooth'' at the barrier.  
Differently from the classical dual model as in \cite{BKY} where the value function becomes $C^1 (0, \infty)$ (resp.\ $C^2 (0, \infty)$) for the case $X$ is of bounded (resp.\ unbounded) variation (see \cite{BKY2} for the case there is a fixed cost), we shall see in the periodic payment case that  the value function becomes $C^2 (0, \infty)$ (resp.\ $C^3 (0, \infty)$) for the case $X$ is of bounded (resp.\ unbounded) variation.  
\item In the verification step, we first obtain the verification lemma, or sufficient conditions for optimality, and then show that the candidate value function corresponding to the selected periodic barrier strategy satisfies all the conditions.  We shall see that its slope is larger (resp.\ smaller) than $1$ at the position below (resp.\ above)  the barrier.  This together with the martingales constructed using scale functions completes the proof. 
\end{enumerate}
We see that $b^* = 0$ can be possible and in this case the \emph{taking all the money and run strategy at the first opportunity} becomes optimal. 
As has been observed in \cite{ATW}, this can happen even when (the terminal payoff is zero and) the underlying \lev process drifts to infinity, while in the classical model this happens if and only if the process drifts to $-\infty$ or oscillates.

In our second problem, we consider the case with classical bail-outs, where capital must be injected so that the surplus process remains nonnegative uniformly in time; see \cite{ASW2011, BKY} for the classical case. The objective is to maximize the expected NPV of dividends minus the costs of capital injection.  Using the results in \cite{APY}, the expected NPV under the periodic barrier strategy can be computed.   Again, we select the candidate barrier $b^\dagger$ using the same smoothness conditions described above.  The optimality is shown similarly by the verification arguments.  In fact, most of the results hold verbatim because  the resulting value function admits \emph{the same form} as that for the first problem, except that the barrier level is different.

In both problems, the optimal barrier and the value function can be written concisely using the scale function. In order to confirm the obtained analytical results, we give a sequence of numerical experiments using the phase-type \lev process that admits an analytical form of scale function, and hence the solutions can be instantaneously computed.  We shall confirm the optimality and also analyze the behaviors as the frequency of dividend-decision opportunities increases.

Before closing the introduction, we discuss here the connections with the results in Zhao et al.\ \cite{ZCY}.  The first problem considered in \cite{ZCY}  is the special case of our first problem with no  terminal  payoff/cost at ruin. While our paper directly uses the results of Avram et al.\ \cite{APY} to derive the expected NPV of dividends under the periodic barrier strategy, they obtained it in a different way using the results by Albrecher et al.\ \cite{AIZ}, which gives the identities for spectrally negative \lev processes observed at Poisson arrival times.  For the selection of optimal barrier and verification of optimality, several results  in the current paper (Lemmas \ref{lemma_opt_thres} and \ref{cond_b^*}, in particular) are used.  The second problem in \cite{ZCY} is a variant of our second problem (with capital injection) where they consider the case with a fixed cost for capital injection.  With the existence of a fixed cost, the set of capital injection strategies is restricted to be a set of impulse control.  As shown in \cite{ZCY}, their value function converges, as the fixed cost decreases to zero, to that of our second problem.

The rest of the paper is organized as follows. In Section \ref{section_preliminaries}, we review the spectrally positive \lev process and define the two problems to be considered in this paper.   In Section  \ref{PR}, we define the periodic barrier strategy (with and without the classical reflection below) and construct the corresponding surplus process.  We review the scale function and give the expected NPVs  corresponding to these strategies.  Sections \ref{section_first_prob} and \ref{section_second_prob} solve the first and second problems, respectively.  
Section \ref{section_numerics} gives numerical results and Section \ref{section_conclusion} concludes the paper. 
The proofs of the verification lemmas are deferred to the appendix.

Throughout the paper,  $x+ := \lim_{y \downarrow x}$ and $x-  := \lim_{y \uparrow x}$ are used to indicate the right- and left-hand limits, respectively. 
We let 
$\Delta \zeta(s):= \zeta(s)-\zeta(s-)$ and $\Delta w(\zeta(s)):=w(\zeta(s))-w(\zeta(s-))$ for any process $\zeta$ with left-limits.

	\section{Preliminaries} \label{section_preliminaries}
\subsection{Spectrally positive L\'evy processes} Let $X=(X(t); t\geq 0)$ be a L\'evy process defined on a  probability space $(\Omega, \mathcal{F}, \p)$.  For $x\in \R$, we denote by $\p_x$ the law of $X$ when it starts at $x$ and write for convenience  $\p$ in place of $\p_0$. Accordingly, we shall write $\e_x$ and $\e$ for the associated expectation operators. In this paper, we shall assume throughout that $X$ is \textit{spectrally positive},   meaning here that it has no negative jumps and that it is not a subordinator.  We will assume throughout this work that its Laplace exponent $\psi:[0,\infty) \to \R$, i.e.
	\[
	\e\big[{\rm e}^{-\theta X(t)}\big]=:{\rm e}^{\psi(\theta)t}, \qquad t, \theta\ge 0,
	\] 
	is given, by the \emph{L\'evy-Khintchine formula}
	\begin{equation}\label{lk}
		\psi(\theta):=\gamma\theta+\frac{\sigma^2}{2}\theta^2+\int_{(0, \infty)}\big({\rm e}^{-\theta z}-1+\theta z \mathbf{1}_{\{z<1\}}\big)\Pi(\ud z), \quad \theta \geq 0,
	\end{equation}
	where $\gamma \in \R$, $\sigma\ge 0$, and $\Pi$ is a measure on $(0, \infty)$ called the L\'evy measure of $X$ that satisfies
	\[
	\int_{(0, \infty)}(1\land z^2)\Pi(\ud z)<\infty.
	\]


	It is well-known that $X$ has paths of bounded variation if and only if $\sigma=0$ and $\int_{(0,1)} z\Pi(\mathrm{d}z) < \infty$; in this case, $X$ can be written as
	\begin{equation}
		X(t)=-ct+S(t), \,\,\qquad t\geq 0,\notag
	\end{equation}
	where 
	\begin{align}
		c:=\gamma+\int_{(0,1)} z\Pi(\mathrm{d}z) \label{def_drift_finite_var}
	\end{align}
	and $(S(t); t\geq0)$ is a driftless subordinator. Note that  necessarily $c>0$, since we have ruled out the case that $X$ has monotone paths; its Laplace exponent is given by
	\begin{equation*}
		\psi(\theta) = c \theta+\int_{(0,\infty)}\big( {\rm e}^{-\theta z}-1\big)\Pi(\ud z), \quad \theta \geq 0.
	\end{equation*}
For the rest of the paper, we assume that
	\begin{align}
	\E [X(1)] = - \psi'(0+) < \infty,
	\end{align}
	so that the problem considered below will have nontrivial solutions.

\subsection{The optimal dividend problem with Poissonian dividend-decision times and terminal payoff/penalty at ruin.}\label{dividends-strategy} 

In our first problem, we will assume that the dividend payments can only be made at the arrival times of a Poisson process $N^r=( N^r(t); t\geq 0) $ with intensity $r>0$, which is independent of the L\'evy process $X$.  
The set of dividend-decision times is denoted by $\mathcal{T}_r :=(T(i); i\geq 1 )$, where $T(i)$, for each $i\geq 1$, represents the $i^{\textrm{th}}$ arrival time of the Poisson process $N^r$. This implies that $T(i)-T(i-1)$, $i\geq 1$ (with $T(0) := 0$) are exponentially distributed with mean $1/r$.  Let $\mathbb{F} := (\mathcal{F}(t); t \geq 0)$ be the filtration generated by the process $(X, N^r)$.

\par In this setting, a strategy  $\pi := \left( L^{\pi}(t); t \geq 0 \right)$ is a nondecreasing, right-continuous, and $\mathbb{F}$-adapted process where  the cumulative amount of dividends $L^{\pi}$ admits the form
\[
L^{\pi}(t)=\int_{[0,t]}\nu^{\pi}(s)\diff N^r(s),\qquad\text{$t\geq0$,}
\]
for some $\mathbb{F}$-adapted c\`agl\`ad process $\nu^{\pi}$.
Here, for each $t\geq0$, $\nu^\pi(t)$ represents the dividend payment at time $t$ associated with the strategy $\pi$. 

The surplus process $U^\pi$ after dividends are deducted is such that 
\[
U^\pi(t) := X(t)- L^\pi (t) =X(t)-\sum_{i=1}^{\infty}\nu^{\pi}(T(i))1_{\{T(i)\leq t\}}, \qquad\text{$0\leq t\leq \tau_0^{\pi}$}, 
\]
where 
\begin{align*}
\tau_0^{\pi}:=\inf\{t>0:U^{\pi}(t)<0\}
\end{align*}
 is the corresponding ruin time.  Here and throughout, let $\inf \varnothing = \infty$. While the payment of dividends is allowed to cause immediate ruin, it cannot exceed the amount of surplus currently available. In other words, we also assume that
\begin{align} \label{surplus_constraint}
0\leq  \Delta L^{\pi}(T(i))=\nu^{\pi}(T(i))\leq U^{\pi}(T(i)-), \qquad\text{for $i \geq 1$.}
\end{align}
Let $\mathcal{A}$ be the set of all admissible strategies  that satisfy all the constraints described above.

The problem is to maximize, for $q > 0$, the expected NPV of dividends paid until ruin and the terminal payoff at ruin $\rho \in \R$ (penalty if it is negative)  associated with the strategy $\pi\in \mathcal{A}$, defined as
\begin{align*}
	v_{\pi} (x) := \mathbb{E}_x \Big( \int_{[0,\tau_0^{\pi}]} {\rm e}^{-q t} \diff L^{\pi}(t) + \rho {\rm e}^{-q \tau_0^\pi}\Big) = \mathbb{E}_x \Big(\int_{[0,\tau_0^{\pi}]} {\rm e}^{-q t}  \nu^{\pi}(t)\diff N^r(t) + \rho {\rm e}^{-q \tau_0^\pi}\ \Big), \quad x \geq 0.
\end{align*}
Hence the problem is to compute the value function
\begin{equation*}
	v(x):=\sup_{\pi \in \mathcal{A}}v_{\pi}(x), \quad x \geq 0,
\end{equation*}
and obtain the optimal strategy $\pi^*$ that attains it, if such a strategy exists.

\subsection{Extension with classical bail-outs.} \label{second_prob}
In our second problem,
we consider a version  where the time horizon is infinity, and the shareholders are required to inject capital to prevent the company from going bankrupt, with extra conditions on the dividend strategy described below.


A strategy  is a pair $\bar{\pi} := \left( L^{\bar{\pi}}(t), R^{\bar{\pi}}(t); t \geq 0 \right)$ of nondecreasing, right-continuous, and $\mathbb{F}$-adapted processes where $L^{\bar{\pi}}$ is the cumulative amount of dividends and $R^{\bar{\pi}}$ is that of injected capital. The corresponding risk process is given by $U^{\bar{\pi}}(0-) := x$ and 
\begin{align*}
U^{\bar{\pi}}(t) := X(t) - L^{\bar{\pi}}(t) + R^{\bar{\pi}}(t), \quad t \geq 0,
\end{align*}
and $(L^{\bar{\pi}}, R^{\bar{\pi}})$ must be chosen so that $U^{\bar{\pi}}$ stays nonnegative uniformly in time.

\par In addition,  we will assume that the cumulative amount of dividends can only occur at the arrival times of a Poisson process in $\mathcal{T}_r$, and so, in a similar way as in Section \ref{dividends-strategy}, we have that $L^{\bar{\pi}}$ admits the form
\[
L^{\bar{\pi}}(t)=\int_{[0,t]}\nu^{\bar{\pi}}(s) \diff N^r(s),\qquad\text{$t \geq 0$,}
\]
for some $\mathbb{F}$-adapted c\`agl\`ad process $\bar{\nu}^{\pi}$.

Assuming that $\beta > 1$ is the cost per unit injected capital and $q > 0$ is the discount factor, we want to maximize 
\begin{align*}
	u_{\bar\pi} (x) := \mathbb{E}_x \left( \int_{[0, \infty)} {\rm e}^{-q t} \diff L^{\bar{\pi}}(t) - \beta \int_{[0, \infty)} {\rm e}^{-q t} \diff R^{\bar{\pi}}(t)\right), \quad x \geq 0,
\end{align*}
over  the set of all admissible strategies $\bar{\mathcal{A}}$ that satisfy all the constraints described above and 
\begin{align}
	\E_x\left(\int_{[0, \infty)} {\rm e}^{-qt} \diff R^{\bar{\pi}}(t)\right) < \infty. 
	 \label{admissibility2}
\end{align}
Hence the problem is to compute the value function
\begin{equation*}
	u(x):=\sup_{\bar{\pi} \in \bar{\mathcal{A}}}u_{\bar{\pi}}(x), \quad x \geq 0,
\end{equation*}
and obtain an optimal strategy $\bar{\pi}^*$ that attains it, if such a strategy exists.

	\section{
	Periodic barrier strategies}\label{PR}
	
Our objective for the first problem is to show the optimality of the periodic barrier strategy
, say $\pi^{b}$, with a suitable barrier level $b \geq 0$. Namely, at each Poissonian dividend-decision time, dividends are paid whenever the surplus process is above $b$ and is pushed down so that the remaining surplus becomes $b$. The controlled process, which we formally construct below, is precisely the dual process of the \emph{Parisian-reflected process} considered in \cite{APY}.

	
	
With $\mathcal{T}_r=(T(i); i \geq 1)$, the set of jump times of an independent Poisson process defined in Section \ref{section_preliminaries}, we construct the \emph{\lev process with Parisian reflection above} $U_r^{b} = (U_r^{b}(t); t \geq 0)$ as follows: the process is only observed at times $\mathcal{T}_r$ and is pushed down to $b$ if only if it is above $b$.
	
	More specifically, we have
	\begin{align} \label{X_X_r_the_same}
		U_r^{b}(t) = X(t), \quad 0 \leq t < T_b^+(1)
	\end{align}
	where
	\begin{align} T_b^+(1) := \inf\{T(i):\; X(T(i)) >b\}. \label{def_T_0_1}
	\end{align}
	The process then jumps downward by $X(T_b^+(1))-b$ so that $U_r^{b}(T_b^+(1)) = b$. For $T_b^+(1) \leq t < T_b^+(2)  := \inf\{T(i) > T_b^+(1):\; U^{b}_r(T(i)-) > b\}$, we have $U_r^{b}(t) = X(t) - (X(T_b^+(1))-b)$.  The process $U_r^{b}$ can be constructed by repeating this procedure.
	
	
	Suppose $L_r^b(t)$ is the cumulative amount of (Parisian) reflection until time $t \geq 0$. Then we have
	\begin{align*}
		U_r^{b}(t) = X(t) - L_r^b(t), \quad t \geq 0,
	\end{align*}
	with
	\begin{align}
		L_r^b(t) := \sum_{T_b^+(i) \leq t} \left(U_r^{b}(T_b^+(i)-)-b\right), \quad t \geq 0, \label{def_L_r}
	\end{align}
	where $(T_b^+(n); n \geq 1)$ can be constructed inductively by \eqref{def_T_0_1} and
	\begin{eqnarray*}T_b^+(n+1) := \inf\{T(i) > T_b^+(n):\; U_r^{b}(T(i)-) >b\}, \quad n \geq 1.
	\end{eqnarray*}
	
It is clear that the strategy $\pi^b := (L_r^b(t); t \geq 0 )$, for $b \geq 0$, is admissible for the first problem defined in Section \ref{dividends-strategy}. Its expected NPV of dividends  is given by 
	\begin{align}\label{vf}
		v_b(x):=\E_x\Big(\int_{[0,\tau_0^b]}{\rm e}^{-qt}\diff L_r^b(t) + \rho {\rm e}^{-q \tau_0^b}\Big), \quad x \geq 0,
	\end{align}
where 
	\begin{align*}
	\tau_0^b := \inf \{ t > 0: U_r^b(t) < 0 \}.
	\end{align*}
	
For the second problem, we want to show the optimality of an extension of the above strategy with additional classical reflection (capital injection) below at $0$, say $\bar{\pi}^{b}$, with a suitable Parisian reflection level $b \geq 0$. Namely, dividends are paid whenever the surplus process is above $b$ at dividend-decision times, while it is pushed upward by capital injection whenever it attempts to down-cross zero.  The controlled process, which we define formally below, is again the dual of a process considered in \cite{APY}.
	
We construct the process $U_r^{0,b}$ with additional (classical) reflection below as follows.  Let
	\begin{align*}
		Y(t) := X(t) + R(t) \quad \textrm{where } R (t):= (-\inf_{0 \leq s \leq t} X(s)) \vee 0, \quad t \geq 0,
	\end{align*}
	be the process reflected from below at $0$.  We have
	\begin{align}
		U_r^{0,b}(t) = Y(t), \quad 0 \leq t < \widehat{T}_b^{+} (1) \label{Y_matches}
	\end{align}
	where $\widehat{T}_b^{+}(1) := \inf\{T(i):\; Y(T(i)) > b\}$.
	The process then jumps downward by $Y(\widehat{T}_b^{+}(1))-b$ so that $U_r^{0,b}(\widehat{T}_b^{+}(1)) = b$. For $\widehat{T}_b^{+}(1) \leq t < \widehat{T}_b^{+}(2)  := \inf\{T(i) > \widehat{T}_b^{+}(1):\; U_r^{0,b}(T(i) -) > b\}$, $U_r^{0,b}(t)$ is the process reflected  at $0$ of  the process $X(t) - X(\widehat{T}_b^{+}(1)) +b$. 
	The process $U_r^{0,b}$ can be constructed by repeating this procedure.
	It is clear that it admits a decomposition
	\begin{align*}
		U_r^{0,b}(t) = X(t) - L_r^{0,b}(t) + R_r^{0,b}(t), \quad t \geq 0,
	\end{align*}
	where $L_r^{0,b}(t)$ and $R_r^{0,b}(t)$ are, respectively, the cumulative amounts of Parisian and classical reflection until time $t$. 

	It is clear that the strategy $\bar{\pi}^b := \{(L_r^{0,b}(t), R_r^{0,b}(t)); t \geq 0 \}$, for $b \geq 0$, is admissible for the second problem described in Section \ref{second_prob}. Its expected NPV is given by 
	\begin{align} \label{v_pi}
		u_b(x) := \mathbb{E}_x \left( \int_{[0, \infty)} {\rm e}^{-q t} \diff L_r^{0,b}(t) - \beta \int_{[0, \infty)} {\rm e}^{-q t} \diff R^{0,b}_r(t)\right), \quad x \geq 0.
	\end{align}

\subsection{Computation of the expected NPVs \eqref{vf} and \eqref{v_pi}}
The expected NPVs of dividends (minus capital injection) as in \eqref{vf} and \eqref{v_pi} can be computed directly by using the fluctuation theory.  Toward this end, we first review the scale function.

	Fix $q > 0$. We use $W^{(q)}$ for the scale function of the spectrally negative \lev process $-X$.  This is the mapping from $\R$ to $[0, \infty)$ that takes value zero on the negative half-line, while on the positive half-line it is a continuous and strictly increasing function that is defined by its Laplace transform:
	\begin{align} \label{scale_function_laplace}
		\begin{split}
			\int_0^\infty  \mathrm{e}^{-\theta x} W^{(q)}(x) \diff x &= \frac 1 {\psi(\theta)-q}, \quad \theta > \Phi(q),
		\end{split}
	\end{align}
	where $\psi$ is as defined in \eqref{lk} and
	\begin{align}
		\begin{split}
			\Phi(q) := \sup \{ \lambda \geq 0: \psi(\lambda) = q\} . 
		\end{split}
		\label{def_varphi}
	\end{align}
	We also define, for $x \in \R$, 
	\begin{align*}
		\overline{W}^{(q)}(x) &:=  \int_0^x W^{(q)}(y) \diff y, \\ 
		Z^{(q)}(x) &:= 1 + q \overline{W}^{(q)}(x), \\
		\overline{Z}^{(q)}(x) &:= \int_0^x Z^{(q)} (z) \diff z = x + q \int_0^x \int_0^z W^{(q)} (w) \diff w \diff z.
	\end{align*}
	Because $W^{(q)}(x) = 0$ for $-\infty < x < 0$, we have
	\begin{align}
		\overline{W}^{(q)}(x) = 0,\quad Z^{(q)}(x) = 1,
		  \quad \textrm{and} \quad \overline{Z}^{(q)}(x) = x, \quad x \leq 0.  \label{z_below_zero}
	\end{align}
	
If we define $\tau_0^- := \inf \left\{ t \geq 0: X(t) < 0 \right\}$ and $\tau_b^+ := \inf \left\{ t \geq 0: X(t)>  b \right\}$ for any $b > 0$, 
then, for $x \geq 0$,
\begin{align}
\begin{split}
\E_x \left( {\rm e}^{-q \tau_0^-} 1_{\left\{ \tau_b^+ > \tau_0^- \right\}}\right) &= \frac {W^{(q)}(b-x)}  {W^{(q)}(b)}, \\
 \E_x \left( {\rm e}^{-q \tau_b^+} 1_{\left\{ \tau_b^+ < \tau_0^- \right\}}\right) &= Z^{(q)}(b-x) -  Z^{(q)}(b) \frac {W^{(q)}(b-x)}  {W^{(q)}(b)}.
\end{split}
 \label{laplace_in_terms_of_z}
\end{align}
	
	\begin{remark} \label{remark_smoothness_zero}
			Regarding the asymptotic behaviors near zero, as in Lemmas 3.1 and 3.2 of \cite{KKR},
			\begin{align}\label{eq:Wqp0}
				\begin{split}
					W^{(q)} (0) &= \left\{ \begin{array}{ll} 0 & \textrm{if $X$ is of unbounded
							variation,} \\ \frac 1 {c} & \textrm{if $X$ is of bounded variation,}
					\end{array} \right. \\
					W^{(q)\prime} (0+) &:= \lim_{x \downarrow 0}W^{(q)\prime} (x) =
					\left\{ \begin{array}{ll}  \frac 2 {\sigma^2} & \textrm{if }\sigma > 0, \\
						\infty & \textrm{if }\sigma = 0 \; \textrm{and} \; \Pi(0,\infty)= \infty, \\
						\frac {q + \Pi(0,\infty)} {c^2} &  \textrm{if }\sigma = 0 \; \textrm{and} \; \Pi(0,\infty) < \infty.
					\end{array} \right.
				\end{split}
			\end{align}
			On the other hand, as in Lemma 3.3 of \cite{KKR},
			\begin{align}
				\begin{split}
					{\rm e}^{-\Phi(q) x}W^{(q)} (x) \nearrow \psi'(\Phi(q))^{-1}, \quad \textrm{as } x \uparrow \infty.
				\end{split}
				\label{W_q_limit}
			\end{align}
	\end{remark}
	Along this work we also define, for $q, r > 0$ and $x \in \R$,
	\begin{align}
		J^{(q,r)}(x) &:={\rm e}^{\Phi(q+r) x} \left( 1 -r \int_0^{x} {\rm e}^{-\Phi(q+r) z} W^{(q)}(z) \diff z	\right) > 0, \label{z1}\end{align}
		where the positivity holds because, by \eqref{scale_function_laplace},
\begin{align*}
r\int_0^x {\rm e}^{-\Phi(q+r)z}W^{(q)}(z)\diff z<r\int_0^{\infty}{\rm e}^{-\Phi(q+r)z}W^{(q)}(z) \diff z=\frac{r}{(q+r)-q}=1,
\end{align*}
	and define
	\begin{align}
		Z^{(q,r)}(x) &:=\frac{r}{r+q} Z^{(q)}(x)
		+\frac{q}{r+q} J^{(q,r)}(x) \label{z2}.
	\end{align}
Note that
\begin{align}
 Z^{(q,r)\prime}(x) = \frac {q} {r+q} \Phi(q+r) J^{(q,r)}(x), \quad x \in \R. \label{Z_q_r_der}
\end{align}


The expected NPVs \eqref{vf} and \eqref{v_pi} can be written concisely by the scale functions defined above.  All the fluctuation identities required here are essentially computed in \cite{APY}; they studied the spectrally negative \lev case where it is reflected from below at Poisson arrival times, and also its variation with additional classical reflection from above. Our processes $U_r^b$ and $U^{0,b}_r$ are the dual of these processes and hence their results can be directly used.
\begin{lemma} For all $b \geq 0$ and $x \geq 0$,
\begin{align}\label{vf3}
v_b(x)&=\frac{H^{(q,r)}(b) +\rho}{Z^{(q,r)}(b)}Z^{(q,r)}(b-x)-H^{(q,r)}(b-x), \\
\label{vf_db_a}
u_b(x)&=\left(\frac{r Z^{(q)} (b)}{ r+q}-\beta\right)\frac{Z^{(q,r)}(b-x)}{Z^{(q,r)\prime}(b)}-H^{(q,r)}(b-x),
\end{align}
	where 
	\begin{align*}
		H^{(q,r)}(y)&:=\frac{r}{r+q}\left(\overline{Z}^{(q)}(y)+\frac{\psi'(0+)}{q}\right), \quad y \in \R. 
	\end{align*}
\end{lemma}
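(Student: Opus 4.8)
The plan is to reduce both identities to fluctuation results already available in \cite{APY} by the duality between our processes and theirs. First I would observe that $-X$ is a spectrally negative \lev process, and that the Parisian-reflected-above process $U_r^b$ associated to $X$ corresponds, under the map $y \mapsto b - y$, to the Parisian-reflected-below process of $-X$ studied in \cite{APY}; likewise $U_r^{0,b}$ is the dual of the doubly (Parisian-above and classically-below) reflected process. Under this change of variables the ruin time $\tau_0^b$ of $U_r^b$ becomes the first up-crossing of level $b$ of the reflected-below process for $-X$ started from $b-x$, and the Parisian dividend process $L_r^b$ becomes the cumulative Parisian reflection of that process. Therefore \eqref{vf3} should follow by specializing the NPV formula of \cite{APY} (their expected discounted total reflection plus a discounted terminal term weighted by $\rho$), after translating their $q$-scale-function expressions into our $W^{(q)}, Z^{(q)}, J^{(q,r)}, Z^{(q,r)}$ via \eqref{z1}–\eqref{z2}. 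The key algebraic point is that the combination appearing in \cite{APY} is exactly $Z^{(q,r)}$, and that the ``running'' part collects into $H^{(q,r)}$, where the constant $\psi'(0+)/q$ comes from the identity $\E_x\big(\int_{[0,\infty)} e^{-qt}\,\diff R(t)\big)$-type terms or, equivalently, from the asymptotics \eqref{W_q_limit} together with $\overline{Z}^{(q)}$.

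For \eqref{vf3} concretely, I would write $v_b(x) = \E_x\big(\int_{[0,\tau_0^b]} e^{-qt}\diff L_r^b(t)\big) + \rho\, \E_x(e^{-q\tau_0^b})$ and evaluate the two pieces separately. The second piece is a Parisian-type two-sided exit quantity: $\E_x(e^{-q\tau_0^b}) = Z^{(q,r)}(b-x)/Z^{(q,r)}(b)$, which is the dual of \cite{APY}'s formula for the discounted probability that the Parisian-reflected-below process up-crosses $b$ before anything else; here the role of the ordinary scale-function exit identity \eqref{laplace_in_terms_of_z} is played by its ``Poisson-observed'' refinement, which is precisely why $Z^{(q)}$ is replaced by $Z^{(q,r)}$. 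For the dividend (reflection) piece I would use the \cite{APY} identity for the expected discounted cumulative reflection, which in the dual picture has the form $\dfrac{H^{(q,r)}(b)}{Z^{(q,r)}(b)}Z^{(q,r)}(b-x) - H^{(q,r)}(b-x)$ — the function $H^{(q,r)}$ being the natural antiderivative object that reduces to $\tfrac{r}{r+q}\overline Z^{(q)}$ up to the additive constant fixing the correct behaviour as the surplus falls below $0$ (using \eqref{z_below_zero}). Adding the two pieces and grouping the $Z^{(q,r)}(b-x)$ terms gives exactly the stated right-hand side. For \eqref{vf_db_a} the argument is parallel but now with the extra classical reflection at $0$: the dividend part is unchanged in form, while the capital-injection penalty contributes the $-\beta$ term, and the normalizing denominator becomes $Z^{(q,r)\prime}(b)$ rather than $Z^{(q,r)}(b)$; here \eqref{Z_q_r_der} is used to rewrite $Z^{(q,r)\prime}(b) = \tfrac{q}{r+q}\Phi(q+r)J^{(q,r)}(b)$, matching the denominators appearing in the doubly-reflected NPV formula of \cite{APY}. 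The prefactor $\tfrac{r Z^{(q)}(b)}{r+q} - \beta$ is exactly ``(marginal NPV of an extra unit of surplus at $b$ paid as dividends) minus (cost of a unit of capital injection)'', which is the standard structure of reflected-dividend-with-bailout value functions.

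The main obstacle I anticipate is not conceptual but bookkeeping: carefully matching the sign conventions, the direction of reflection, and the starting points between \cite{APY}'s spectrally negative setting and our spectrally positive one, and checking that the additive constant $\psi'(0+)/q$ in $H^{(q,r)}$ is the correct one so that $v_b$ and $u_b$ have the right value when the process starts below the barrier (in particular at $x=0$, and in the regime $b=0$ where the ``take the money and run'' strategy is recovered). A secondary point to verify is that the admissibility/integrability conditions — finiteness of $\psi'(0+)$ and, for the second problem, \eqref{admissibility2} — are genuinely used to justify that the expectations are finite and that the \cite{APY} identities apply without extra localization. Once the dictionary between the two sets of scale-function notations is pinned down (especially $J^{(q,r)}$ and $Z^{(q,r)}$ versus the corresponding objects in \cite{APY}), both formulas follow by direct substitution and the elementary regrouping described above.
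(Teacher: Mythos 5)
Your proposal matches the paper's proof: both decompose $v_b$ into the expected discounted dividends plus $\rho\,\E_x(e^{-q\tau_0^b})$ and quote the corresponding identities from \cite{APY} (Corollaries 3.1(ii) and 3.2(ii) there), and both obtain $u_b$ by subtracting $\beta$ times the capital-injection identity (Corollary 3.3) from the dividend identity (Corollary 3.4), using the duality with the spectrally negative Parisian-reflected processes. The paper's proof is exactly this citation-and-substitution argument, so your route is the same.
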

\begin{proof} 
	\par Using Corollaries 3.1 (ii) and 3.2 (ii) in \cite{APY}, we obtain that, for $x \geq 0$, 
	\begin{align}\label{fi1}
	\begin{split}
			\E_x\Big( \int_{[0,\tau_0^b]} {\rm e}^{-qt}\diff L_r^b(t)\Big) &=\frac{H^{(q,r)}(b)}{Z^{(q,r)}(b)}Z^{(q,r)}(b-x)-H^{(q,r)}(b-x), \\
			\E_x\Big( {\rm e}^{-q\tau_0^b} \Big) & =\frac{Z^{(q,r)}(b-x)}{Z^{(q,r)}(b)},
			\end{split}
	\end{align}
which show \eqref{vf3}.

	On the other hand, we have using Corollary 3.4 in \cite{APY} that
	\begin{align}\label{fi2_a}
		\E_x\Big( \int_{[0, \infty)} {\rm e}^{-qt}\diff L_r^{0,b}(t) \Big) =\frac{r }{ r+q} \frac{Z^{(q,r)}(b-x)}{Z^{(q,r)\prime}(b)} Z^{(q)} (b)-H^{(q,r)}(b-x), \quad x\geq 0,
	\end{align}
	and using Corollary 3.3 in \cite{APY} we obtain
	\begin{align}\label{fi2_b}
		\E_x\Big( \int_{[0, \infty)} {\rm e}^{-qt}\diff R_r^{0,b}(t) \Big)= \frac{Z^{(q,r)}(b-x)}{Z^{(q,r)\prime}(b)}, \quad x\geq 0.
	\end{align}
	Subtracting the latter (times $\beta$) from the former, we have \eqref{vf_db_a}.
	\end{proof}

It is noted that the expressions \eqref{vf3} and \eqref{vf_db_a} also hold for $x \geq b \geq 0$ with 
\begin{align*}
			v_b(x) &=(r+q)^{-1} \Big[ \frac{H^{(q,r)}(b) +\rho}{Z^{(q,r)}(b)}\Big( r 
		+q {\rm e}^{(b-x)\Phi(q+r)} \Big) - r \left(b-x+\frac{\psi'(0+)}{q}\right) \Big], \\
		u_b(x) &=(r+q)^{-1} \Big[\frac{1}{Z^{(q,r) \prime}(b)}\left(\frac{r Z^{(q)} (b)}{ r+q}-\beta\right)\Big( r
		+q  {\rm e}^{(b-x)\Phi(q+r)} \Big) -r\left(b-x+\frac{\psi'(0+)}{q}\right) \Big].\end{align*}

\section{Solutions to the Optimal dividend problem with Poissonian dividend-decision times} \label{section_first_prob}
In this section, we solve the first problem defined in Section \ref{dividends-strategy}. Focusing on the periodic barrier strategies $(\pi^b; b \geq 0)$, we shall first identify the candidate barrier $b^*$ so that the expected NPV $v_{b^*}$, if $b^* > 0$, gets smoother at $b^*$.  We shall then show its optimality by verifying that $v_{b^*}$ solves the required variational inequalities.


	
\subsection{Smooth fit}\label{sf_d} 
Motivated by many papers in the literature (see, e.g., \cite{AGS2007, AGS2008,ASW2011,ATW,BKY,BKY2}), we shall choose the barrier so that \emph{the degree of smoothness there increases by one}.  Differently from the classical dual model as in \cite{BKY} where the value function becomes $C^1 (0, \infty)$ (resp.\ $C^2 (0, \infty)$) for the case $X$ is of bounded (resp.\ unbounded) variation, we shall see in this case that we will have $C^2 (0, \infty)$ (resp.\ $C^3 (0, \infty)$) for the case $X$ is of bounded (resp.\ unbounded) variation.

Here, we shall show that the desired smoothness at $b$ is satisfied on condition that 
\begin{equation}\label{opt_thres}
\mathfrak{C}_b: -\frac{H^{(q,r)}(b)+\rho}{Z^{(q,r)}(b)}=\frac{1}{\Phi(q+r)}.
\end{equation}

For all $b > 0$ and $x \in (0, \infty) \backslash \{b\}$, by differentiating \eqref{vf3}, 
\begin{align}\label{vf_der1}
\begin{split}
v_b'(x)&=\frac{r}{r+q}Z^{(q)}(b-x)-\frac{H^{(q,r)}(b)+\rho}{Z^{(q,r)}(b)}\frac{q}{r+q}\Phi(q+r)J^{(q,r)}(b-x),\\
v_b''(x)&=-\frac{rq}{r+q}W^{(q)}(b-x)+\frac{H^{(q,r)}(b)+\rho}{Z^{(q,r)}(b)}\frac{q}{r+q}\left[\Phi^2(q+r) J^{(q,r)}(b-x)-r \Phi(q+r) W^{(q)}(b-x)\right],\\
v_b'''(x-)&=\frac{rq}{r+q}W^{(q)\prime}((b-x)+) -\frac{H^{(q,r)}(b)+\rho}{Z^{(q,r)}(b)}\frac{q}{r+q} \\ &\times \left[\Phi^3(q+r) J^{(q,r)}(b-x)-r \Phi^2(q+r) W^{(q)}(b-x)- r \Phi(q+r) W^{(q)\prime}((b-x)+)\right],
\end{split}
\end{align}
where in particular, for $x>b$,
\begin{align}\label{vf_der2}
\begin{split}
	v_b'(x) &=\frac{r}{r+q} - \Phi(q+r) \frac{H^{(q,r)}(b)+\rho}{Z^{(q,r)}(b)}  \frac{q}{r+q} {\rm e}^{\Phi(q+r)(b-x)}, \\
	v_b''(x) &= \Phi^2(q+r) \frac{H^{(q,r)}(b)+\rho}{Z^{(q,r)}(b)}  \frac{q}{r+q} {\rm e}^{\Phi(q+r)(b-x)}, \\
	v_b'''(x) &=-\Phi^3(q+r) \frac{H^{(q,r)}(b)+\rho}{Z^{(q,r)}(b)}  \frac{q}{r+q} {\rm e}^{\Phi(q+r)(b-x)}.
	\end{split}
\end{align}

(i) First, it is immediate that $v_b$ is continuously differentiable for any choice of $b > 0$ with
\begin{align*}
v_b'(b+) = v_b'(b-)=\frac{r}{r+q}-\frac{q}{r+q}\Phi(q+r)\frac{H^{(q,r)}(b)+\rho}{Z^{(q,r)}(b)}.
\end{align*}


(ii) For the second derivative, 
\begin{align*}
v_b''(b+)
&=\frac{q\Phi^2(q+r)}{r+q}\frac{H^{(q,r)}(b)+\rho}{Z^{(q,r)}(b)} \qquad\text{and}\\
v_b''(b-)&=-\frac{rq}{r+q}W^{(q)}(0)+\frac{H^{(q,r)}(b)+\rho}{Z^{(q,r)}(b)}\frac{q}{r+q}\Phi(q+r)\big[ \Phi(q+r)-rW^{(q)}(0) \big].
\end{align*}
Hence, in order for the function $v_b$ to be twice continuously differentiable, we need to ask that 
\[
\frac{rq}{q+r}W^{(q)}(0) \Big[1+\frac{H^{(q,r)}(b)+\rho}{Z^{(q,r)}(b)}\Phi(q+r) \Big]=0.
\]
This means, in view of \eqref{eq:Wqp0}, that while the twice continuous differentiability automatically holds for the unbounded variation case, for the bounded variation case it holds if and only if $\mathfrak{C}_b$ holds.

(iii) For the unbounded variation case, we will look for the continuity of the third derivative of the function 
$v_b$. Using \eqref{eq:Wqp0}, \eqref{vf_der1}, and \eqref{vf_der2},  we obtain
\begin{align*}
	v_b'''(b+)
	&=-q\frac{\Phi^3(q+r)}{r+q}\frac{H^{(q,r)}(b)+\rho}{Z^{(q,r)}(b)}, \\
v_b'''(b-)&=\frac{rq}{r+q}W^{(q)\prime}(0+)
-\frac{H^{(q,r)}(b)+\rho}{Z^{(q,r)}(b)}\frac{q}{r+q}\Phi(q+r)\left[\Phi^2(q+r)-rW^{(q)\prime}(0+)\right]. 
\end{align*}
Therefore the value function $v_b$ will have a continuous third derivative if \[
\frac{rq}{r+q}W^{(q)\prime}(0+) \Big[ 1+\frac{H^{(q,r)}(b)+\rho}{Z^{(q,r)}(b)} \Phi(q+r)\Big] =0,
\]
which holds if and only if $\mathfrak{C}_b$ holds.

We shall now summarize the results obtained above.
\begin{lemma} \label{smooth_fit_prob1}Suppose $b > 0$ is such that the condition $\mathfrak{C}_b$ as in \eqref{opt_thres} is satisfied. Then, $v_{b}$ is $C^2 (0, \infty)$ for the case $X$ is of bounded variation, while it is $C^3 (0, \infty)$ for the case $X$ is of unbounded variation.
\end{lemma}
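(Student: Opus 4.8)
The plan is to verify directly, by computing one-sided derivatives at $b$, that imposing $\mathfrak{C}_b$ raises the order of smoothness by exactly one, using the already-derived formulas \eqref{vf_der1} and \eqref{vf_der2} together with the boundary behavior of $W^{(q)}$ in \eqref{eq:Wqp0}. Since $v_b$ agrees with a combination of scale functions on $(0,b)$ and with the explicit exponential expression on $(b,\infty)$, everything away from $b$ is automatically smooth; the only issue is matching at the single point $x=b$.

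First I would record that $v_b \in C^1(0,\infty)$ for every $b>0$ with no extra condition: substituting $x=b$ into \eqref{vf_der1} and into \eqref{vf_der2} and using $Z^{(q)}(0)=1$, $J^{(q,r)}(0)=1$ shows $v_b'(b-)=v_b'(b+)$ unconditionally. Next I would turn to the second derivative. From \eqref{vf_der1} and \eqref{vf_der2}, the difference $v_b''(b-)-v_b''(b+)$ collapses (again using $J^{(q,r)}(0)=1$) to a multiple of $W^{(q)}(0)\bigl[1+\Phi(q+r)(H^{(q,r)}(b)+\rho)/Z^{(q,r)}(b)\bigr]$. By \eqref{eq:Wqp0}, $W^{(q)}(0)=0$ in the unbounded variation case, so $v_b\in C^2(0,\infty)$ automatically there; in the bounded variation case $W^{(q)}(0)=1/c\neq 0$, so $C^2$-smoothness holds iff the bracket vanishes, i.e.\ iff $\mathfrak{C}_b$ holds. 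For the unbounded variation case I would then compute $v_b'''(b-)-v_b'''(b+)$ from the third lines of \eqref{vf_der1} and \eqref{vf_der2}, again with $J^{(q,r)}(0)=1$ and $W^{(q)}(0)=0$, obtaining a multiple of $W^{(q)\prime}(0+)\bigl[1+\Phi(q+r)(H^{(q,r)}(b)+\rho)/Z^{(q,r)}(b)\bigr]$, which vanishes iff $\mathfrak{C}_b$ holds (noting $W^{(q)\prime}(0+)\in(0,\infty]$, so its coefficient is never zero). This yields $C^3(0,\infty)$ in the unbounded variation case under $\mathfrak{C}_b$.

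Finally I would assemble these three observations into the statement of Lemma~\ref{smooth_fit_prob1}: on $(0,b)$ and $(b,\infty)$ the function $v_b$ is smooth because $W^{(q)}$, $Z^{(q)}$, $J^{(q,r)}$ are smooth there (with $W^{(q)}$ continuously differentiable on $(0,\infty)$, and the relevant higher derivatives existing one-sidedly), and at $x=b$ the matching of derivatives up to the claimed order follows from the computations above once $\mathfrak{C}_b$ is in force. The only mildly delicate point is bookkeeping with one-sided derivatives of $W^{(q)}$ at $0$ (the quantities $W^{(q)\prime}((b-x)+)$ appearing in \eqref{vf_der1}) and confirming that no further condition is hidden at higher order — but since the problem only asks for an increase of smoothness by one, I would stop at $C^2$ (bounded variation) and $C^3$ (unbounded variation) and not pursue the next derivative. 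I expect the main — and only real — obstacle to be organizing the one-sided derivative computation at $b$ cleanly so that the common bracket $1+\Phi(q+r)(H^{(q,r)}(b)+\rho)/Z^{(q,r)}(b)$ is exhibited transparently in both the second- and third-derivative jumps; everything else is algebraic substitution into formulas already established in the excerpt.
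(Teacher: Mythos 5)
Your proposal is correct and follows essentially the same route as the paper: checking that $v_b'(b-)=v_b'(b+)$ holds unconditionally, that the jump in $v_b''$ at $b$ is a multiple of $W^{(q)}(0)\bigl[1+\Phi(q+r)(H^{(q,r)}(b)+\rho)/Z^{(q,r)}(b)\bigr]$ (so $C^2$ is automatic for unbounded variation and equivalent to $\mathfrak{C}_b$ for bounded variation), and that the jump in $v_b'''$ in the unbounded-variation case is a multiple of $W^{(q)\prime}(0+)$ times the same bracket. This is exactly the computation carried out in Section 4.1 of the paper.
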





\subsection{Selection of the candidate barrier $b^*$.}
Below we obtain a necessary and sufficient condition for the existence of $b$ that satisfies $\mathfrak{C}_b$ as in \eqref{opt_thres}. 

\begin{lemma}\label{lemma_opt_thres}
There exists a unique solution $\tilde{b} >0$ to the equation \eqref{opt_thres} if and only if
\begin{equation}\label{cond_opt_thres}
\psi'(0+) < - \frac q r (q+r) \Big(\rho + \frac 1 {\Phi(q+r)} \Big) =: I_{r,q}.
\end{equation}
\end{lemma}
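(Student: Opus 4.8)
The plan is to analyze the function
\[
g(b) := -\frac{H^{(q,r)}(b)+\rho}{Z^{(q,r)}(b)}
\]
on $(0,\infty)$ and show that the equation $g(b) = 1/\Phi(q+r)$ has a unique root in $(0,\infty)$ precisely under condition \eqref{cond_opt_thres}. First I would record the boundary behavior. At $b = 0$ we have $Z^{(q,r)}(0) = 1$ (since $Z^{(q)}(0) = 1$ and $J^{(q,r)}(0) = 1$) and $H^{(q,r)}(0) = \frac{r}{r+q}\cdot\frac{\psi'(0+)}{q}$, so
\[
g(0+) = -\Big(\rho + \frac{r}{r+q}\frac{\psi'(0+)}{q}\Big).
\]
Thus $g(0+) > 1/\Phi(q+r)$ is equivalent to $-\rho - \frac{r\psi'(0+)}{q(r+q)} > \frac{1}{\Phi(q+r)}$, i.e.\ to $\psi'(0+) < -\frac{q}{r}(q+r)\big(\rho + \frac{1}{\Phi(q+r)}\big) = I_{r,q}$, which is exactly \eqref{cond_opt_thres}. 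So the condition is nothing but the statement that $g(0+)$ lies strictly above the target level $1/\Phi(q+r)$.

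Next I would pin down $g$ as $b\to\infty$ and, crucially, show $g$ is strictly monotone. Using \eqref{W_q_limit}, $J^{(q,r)}(b) \sim \mathrm{e}^{\Phi(q+r)b}\big(1 - r\int_0^\infty \mathrm{e}^{-\Phi(q+r)z}W^{(q)}(z)\,\diff z\big) + o(\cdot)$; a careful asymptotic expansion (the leading term of $J^{(q,r)}$ grows like $\mathrm{e}^{\Phi(q+r)b}$ times a constant times $\Phi(q+r)/\big(\Phi(q+r)-\Phi(q)\big)$-type correction — to be computed from \eqref{z1}) shows $Z^{(q,r)}(b)$ is dominated by the $J^{(q,r)}$ term and grows exponentially, while $H^{(q,r)}(b)$ grows only polynomially (like $\frac{r}{r+q}\overline{Z}^{(q)}(b)$, which is $O(b\,\mathrm{e}^{\Phi(q)b})$, strictly slower than $\mathrm{e}^{\Phi(q+r)b}$ since $\Phi(q+r) > \Phi(q)$). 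Hence $g(b) \to 0$ as $b\to\infty$. For monotonicity, I would compute $g'(b)$. Writing $g = -(H+\rho)/Z$ (suppressing superscripts), $g' = -\frac{H' Z - (H+\rho)Z'}{Z^2}$. Using $H^{(q,r)\prime}(b) = \frac{r}{r+q}Z^{(q)}(b)$ and the identity \eqref{Z_q_r_der}, $Z^{(q,r)\prime}(b) = \frac{q}{r+q}\Phi(q+r)J^{(q,r)}(b)$, together with $Z^{(q,r)}(b) = \frac{r}{r+q}Z^{(q)}(b) + \frac{q}{r+q}J^{(q,r)}(b)$, one finds after simplification that
\[
g'(b) = -\frac{\frac{r}{r+q}Z^{(q)}(b)\,Z^{(q,r)}(b) - (H^{(q,r)}(b)+\rho)\frac{q}{r+q}\Phi(q+r)J^{(q,r)}(b)}{Z^{(q,r)}(b)^2}.
\]
The sign of the numerator is the sign of $\frac{r}{r+q}Z^{(q)}(b) - g(b)\frac{q}{r+q}\Phi(q+r)J^{(q,r)}(b)$ times $Z^{(q,r)}(b)$; I expect the monotonicity to follow from the observation that wherever $g(b) \le 1/\Phi(q+r)$, this quantity is strictly positive (using $Z^{(q,r)}(b) \ge \frac{q}{r+q}J^{(q,r)}(b)\Phi(q+r)/\Phi(q+r)$ — i.e.\ comparing $Z^{(q)}$ and $J^{(q,r)}$), so $g' < 0$ there; hence once $g$ drops to the level $1/\Phi(q+r)$ it keeps decreasing and never returns, giving at most one crossing. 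Combined with $g(0+) > 1/\Phi(q+r)$ (under \eqref{cond_opt_thres}), continuity, and $g(\infty) = 0 < 1/\Phi(q+r)$, the intermediate value theorem gives exactly one root $\tilde b > 0$; and if \eqref{cond_opt_thres} fails, $g(0+) \le 1/\Phi(q+r)$ and $g$ stays $\le 1/\Phi(q+r)$ thereafter (indeed strictly below for $b>0$ by the same sign argument), so no root exists.

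The main obstacle I anticipate is making the monotonicity argument clean: the naive derivative $g'$ does not obviously have a fixed sign globally, so the right move is a "conditional monotonicity" statement — $g' < 0$ on the set $\{g \le 1/\Phi(q+r)\}$ — which suffices for uniqueness. Establishing that requires a sharp comparison between $Z^{(q)}(b)$ and $\Phi(q+r)J^{(q,r)}(b)$, which in turn rests on the representations \eqref{z1}, \eqref{scale_function_laplace} and the Laplace-transform identity $r\int_0^\infty \mathrm{e}^{-\Phi(q+r)z}W^{(q)}(z)\,\diff z = 1$; I would also need the large-$b$ asymptotics of $J^{(q,r)}$ and $\overline{Z}^{(q)}$ from \eqref{W_q_limit} to nail down $g(\infty) = 0$. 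All other steps — the value of $g(0+)$, continuity, and the translation of $g(0+) > 1/\Phi(q+r)$ into \eqref{cond_opt_thres} — are routine substitutions. I would present the proof in the order: (1) compute $g(0+)$ and identify \eqref{cond_opt_thres} with $g(0+) > 1/\Phi(q+r)$; (2) show $g(b)\to 0$ as $b\to\infty$; (3) prove the conditional monotonicity; (4) conclude by IVT and uniqueness in both directions.
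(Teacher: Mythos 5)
Your overall strategy---analyze the boundary behavior of $g(b) := -(H^{(q,r)}(b)+\rho)/Z^{(q,r)}(b)$ and combine a limit at infinity with a monotonicity argument---can be made to work, but two of the key steps as you state them are wrong. First, the claim that $g(b)\to 0$ as $b\to\infty$ is false. The factor $1 - r\int_0^\infty {\rm e}^{-\Phi(q+r)z}W^{(q)}(z)\,\diff z$ which you implicitly keep as the leading coefficient in front of ${\rm e}^{\Phi(q+r)b}$ is exactly $0$ by \eqref{scale_function_laplace}; working out the next order, $J^{(q,r)}(b) = {\rm e}^{\Phi(q+r)b}\, r\int_b^\infty {\rm e}^{-\Phi(q+r)z}W^{(q)}(z)\,\diff z$ grows like ${\rm e}^{\Phi(q)b}$, i.e.\ at the \emph{same} exponential rate as $Z^{(q)}(b)$ and $\overline{Z}^{(q)}(b)$, so $H^{(q,r)}$ and $Z^{(q,r)}$ are of the same order and $g(b)\to 1/\Phi(q+r) - 1/\Phi(q)$, a strictly negative constant. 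The conclusion you need, $\limsup_{b\to\infty} g(b) < 1/\Phi(q+r)$, still holds, but the stated asymptotic is incorrect. Second, the ``conditional monotonicity'' claim that $g'<0$ on $\{g\le 1/\Phi(q+r)\}$ is also incorrect: the sign of $-g'(b)$ equals the sign of $\frac{r}{r+q}Z^{(q)}(b) + g(b)\frac{q}{r+q}\Phi(q+r)J^{(q,r)}(b)$, which is certainly positive when $g(b)\ge 0$ but can change sign when $g(b)$ is sufficiently negative (possible, e.g., for $\rho$ large), and such $b$'s lie inside your set. What you actually need is the weaker and correct statement that $g'(b)<0$ at every $b$ where $g(b)=1/\Phi(q+r)$ (since there the displayed quantity equals $Z^{(q,r)}(b)>0$); that forces only downward crossings and hence at most one crossing of that level, which, combined with $g(0+)>1/\Phi(q+r) \Leftrightarrow \eqref{cond_opt_thres}$ and $\lim_{b\to\infty}g(b)<1/\Phi(q+r)$, gives the lemma.

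For comparison, the paper sidesteps both difficulties by clearing the denominator: condition \eqref{opt_thres} is rewritten as $f(b)=0$ with $f(b):=\overline{Z}^{(q)}(b)+\frac{\psi'(0+)}{q}+\frac{q+r}{r}\rho+\frac{1}{\Phi(q+r)}\frac{r+q}{r}Z^{(q,r)}(b)$, whose derivative $f'(b)=Z^{(q)}(b)+\frac{q}{r}J^{(q,r)}(b)$ is manifestly positive. Global strict monotonicity of $f$ plus $f(\infty)=\infty$ (immediate from \eqref{W_q_limit}) reduces the lemma to checking $f(0)<0$, which is exactly \eqref{cond_opt_thres}. This avoids the delicate asymptotics of the ratio and the sign ambiguity of $g'$ entirely, and is the route you should take if you wish to keep your computations short.
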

\begin{proof}
First it is clear that the condition $\mathfrak{C}_b$ is equivalent to the condition $f(b) = 0$ where 
\begin{align}
f(b):=\overline{Z}^{(q)}(b)+\frac{\psi'(0+)}{q} +\frac {q+r} r \rho+ \frac{1}{\Phi(q+r)} \frac {r+q} r Z^{(q,r)}(b). \label{def_f}
\end{align}
Differentiating this and by \eqref{Z_q_r_der},
\begin{align*}
	f'(b)
	&=Z^{(q)}(b)+ \frac{q}{r}J^{(q,r)}(b)>0.
\end{align*}
Hence, the function $f$ is strictly increasing, and we note that, by \eqref{W_q_limit}, $\lim_{b\to\infty}f(b)=\infty$.

Therefore there exists a unique point $\tilde{b}>0$ such that $f(\tilde{b})=0$ if and only if $f(0)<0$, which is equivalent to \eqref{cond_opt_thres} because $f(0)=\frac{\psi'(0+)}{q}  +\frac {q+r} r (\rho +\frac{1}{\Phi(q+r)})$.
\end{proof}

In view of Lemma \ref{lemma_opt_thres}, we will take, as the candidate optimal barrier $b^*$, the unique root of \eqref{opt_thres} if \eqref{cond_opt_thres} holds. For the case in which
\begin{equation}
\psi'(0+)\geq I_{r,q} 
\label{b_zero_criteria}
\end{equation}
holds, we will take the candidate optimal barrier as $b^*=0$; namely, the corresponding strategy \emph{takes all the money and runs at the first opportunity}, which occurs at the first Poissonian dividend-decision time. As has been observed in \cite{ATW} (when $\rho = 0$), this can happen even when $\E X_1 = -\psi'(0+) > 0$, while in the classical model this happens if and only if $\E X_1 \leq 0$ (see \cite{BKY}).

\begin{remark} Suppose $\rho = 0$.
In view of \eqref{cond_opt_thres}, the threshold $I_{r,q}$ vanishes in the limit as $r \rightarrow \infty$. In other words, the criterion for $b^* = 0$ converges to that in the classical case as the frequency of dividend-decision opportunities increases to infinity.

On the other hand, as $r \rightarrow 0$, $I_{r,q} \rightarrow -\infty$, which means  $b^* = 0$ for small enough $r > 0$.  This suggests to take all the money and run at the first opportunity if one needs to expect a long time until the next dividend-decision time. 
\end{remark}
\subsection{Verification.}\label{ver_div}

With $b^* \geq 0$ defined above, we shall now show the optimality of the obtained periodic barrier strategy 
 $\pi^{b^*}$.

For the case $b^* > 0$, because $b^*$ satisfies $\mathfrak{C}_b$, the expected NPV \eqref{vf3} can be succinctly written
\begin{align}\label{vf5}
	v_{b^*}(x)=-H^{(q,r)}(b^*-x)-\frac{Z^{(q,r)}(b^*-x)}{\Phi(q+r)},\qquad\text{for $x\geq 0$}.
\end{align}
On the other hand, when $b^* = 0$, we have \begin{align} \label{value_function_zero}
\begin{split}
v_{b^*}(x) =v_0(x)&=-H^{(q,r)}(-x)+\frac{H^{(q,r)}(0) +\rho}{Z^{(q,r)}(0)}Z^{(q,r)}(-x) \\
&= \frac r {r+q} \Big[ x -  \frac {\psi'(0+)} {r+q}  \Big( 1 -  {\rm e}^{- \Phi(q+r)x}\Big) \Big] + \rho \Big( \frac r {r+q} + \frac q {r+q} {\rm e}^{-\Phi(q+r) x}\Big) \\
&= \frac r {r+q} \Big[ x -  \Big(\frac {\psi'(0+)} {r+q} - \rho \Big)   +  \Big(\frac {\psi'(0+)} {r+q} + \frac {\rho q} r\Big) {\rm e}^{- \Phi(q+r)x}\Big) \Big]  \qquad\text{for $x\geq 0$}.
\end{split}
\end{align}

Our main result of this section is given as follows.
\begin{theorem}\label{ver_theo}
The periodic barrier strategy $\pi^{b^*}$ is optimal, and the value function is given by $v(x)=v_{b^*}(x)$ for all $0\leq x<\infty$.
\end{theorem}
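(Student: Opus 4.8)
The strategy is to apply the verification lemma (stated and proved in the appendix) to the candidate $v_{b^*}$, using the closed forms \eqref{vf5} (when $b^*>0$, i.e.\ \eqref{cond_opt_thres} holds) and \eqref{value_function_zero} (when $b^*=0$, i.e.\ \eqref{b_zero_criteria} holds). The verification lemma says, roughly, that if $w$ is $C^1(0,\infty)$ in the bounded variation case (resp.\ $C^2(0,\infty)$ in the unbounded variation case), is extended by $\rho$ on $(-\infty,0)$, has the appropriate behaviour at infinity, and satisfies
\[
(\mathcal{L}-q)w(x)+r\Big(\sup_{0\le l\le x}\big[\,l+w(x-l)\,\big]-w(x)\Big)\le 0,\qquad x>0,
\]
where $\mathcal{L}$ is the infinitesimal generator of $X$, then $w\ge v_{\pi}$ for every admissible $\pi$, hence $w\ge v$. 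Since $v_{b^*}$ is by construction the expected NPV of the admissible strategy $\pi^{b^*}$, verifying the above for $w=v_{b^*}$ gives $v_{b^*}\ge v\ge v_{\pi^{b^*}}=v_{b^*}$, which proves the theorem.

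\textbf{Smoothness and slope.} When $b^*>0$, Lemma \ref{smooth_fit_prob1} already gives that $v_{b^*}$ is $C^2(0,\infty)$ (bounded variation) or $C^3(0,\infty)$ (unbounded variation), which is more than the verification lemma asks; when $b^*=0$ the function \eqref{value_function_zero} is manifestly smooth on $(0,\infty)$. The essential structural fact is the pair of slope bounds $v_{b^*}'(x)\ge 1$ for $x\in(0,b^*)$ and $v_{b^*}'(x)\le 1$ for $x\in(b^*,\infty)$, with the variant $v_0'(x)\le 1$ for all $x>0$ when $b^*=0$. For $b^*>0$ these follow from concavity of $v_{b^*}$: by \eqref{vf_der1}--\eqref{vf_der2} and $\mathfrak{C}_{b^*}$ (which gives $\tfrac{H^{(q,r)}(b^*)+\rho}{Z^{(q,r)}(b^*)}=-\tfrac1{\Phi(q+r)}$ and $v_{b^*}'(b^*)=1$), one finds that $v_{b^*}''$ is a negative multiple of a sum of $W^{(q)}(b^*-x)$ and $J^{(q,r)\prime}(b^*-x)$ below $b^*$, and of $\mathrm{e}^{\Phi(q+r)(b^*-x)}$ above $b^*$; the only point needing work is that $J^{(q,r)}$ is nondecreasing, which follows from $J^{(q,r)\prime}(y)=\Phi(q+r)J^{(q,r)}(y)-rW^{(q)}(y)$ together with $\int_0^\infty\mathrm{e}^{-\Phi(q+r)z}W^{(q)}(z)\diff z=\tfrac1r$ (from \eqref{scale_function_laplace}), the limit \eqref{W_q_limit}, and monotonicity of $W^{(q)}$. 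Then $v_{b^*}'$ is nonincreasing with $v_{b^*}'(b^*)=1$, giving the bounds. For $b^*=0$ one checks $v_0'\le 1$ directly from \eqref{value_function_zero}, the key inequality being exactly \eqref{b_zero_criteria}.

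\textbf{The variational inequality.} With the slope bounds in hand, the inequality holds and in fact is an equality throughout, which is what makes the verification go through. For $x\in(0,b^*)$ we have $x-l\in[0,b^*)$ for $l\in[0,x]$, so $\tfrac{\diff}{\diff l}\big[l+v_{b^*}(x-l)\big]=1-v_{b^*}'(x-l)\le 0$, the supremum is attained at $l=0$, the $r$-term vanishes, and it remains to check $(\mathcal{L}-q)v_{b^*}(x)=0$; this follows either from the elementary generator ODEs for $\overline{Z}^{(q)}$, $Z^{(q,r)}$ and $J^{(q,r)}$ (using that $\mathcal{L}\big[g(b^*-\,\cdot\,)\big](x)$ equals the generator of $-X$ applied to $g$ evaluated at $b^*-x$), or from the fact that $\mathrm{e}^{-q(t\wedge\tau_0^{b^*})}v_{b^*}\big(U_r^{b^*}(t\wedge\tau_0^{b^*})\big)+\int_{[0,\,t\wedge\tau_0^{b^*}]}\mathrm{e}^{-qs}\diff L_r^{b^*}(s)$ is a martingale while $U_r^{b^*}$ agrees with $X$ on $(0,b^*)$. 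For $x>b^*$ the slope bounds force the maximiser to be $l=x-b^*$, so the inequality reads $(\mathcal{L}-q)v_{b^*}(x)+r\big[(x-b^*)+v_{b^*}(b^*)-v_{b^*}(x)\big]\le 0$, which is precisely the dynamic programming identity for the strategy that at Poisson times pushes the surplus down to $b^*$, hence an equality. For $b^*=0$, $v_0'\le1$ forces $l=x$ and one verifies $(\mathcal{L}-q)v_0(x)+r\big[x+v_0(0)-v_0(x)\big]\le 0$ directly from \eqref{value_function_zero}.

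\textbf{Main difficulty.} Within the results quoted above, the substantive step is the concavity/slope analysis: although the derivative formulas are explicit, getting the sign of $v_{b^*}'-1$ \emph{uniformly} in $x$, and doing so in a way that also handles the degenerate barrier $b^*=0$, is where the scale-function identities \eqref{scale_function_laplace}, \eqref{W_q_limit} and the condition $\mathfrak{C}_{b^*}$ must be combined carefully; the remaining ingredients — the vanishing of $(\mathcal{L}-q)v_{b^*}$ below $b^*$, the dividend identity above $b^*$, and the finiteness and uniform integrability needed to run the supermartingale argument of the verification lemma — are direct transcriptions of the probabilistic meaning of $v_{b^*}$.
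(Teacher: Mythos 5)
Your proposal follows essentially the same route as the paper: verify that $v_{b^*}$ is sufficiently smooth (Lemma \ref{smooth_fit_prob1}), show strict concavity with $v_{b^*}'(b^*)=1$ when $b^*>0$ (resp.\ $v_0'\leq 1$ when $b^*=0$), identify the maximizer in the HJB as $l=0$ below the barrier and $l=x-b^*$ above, establish $(\mathcal{L}-q)v_{b^*}=0$ below $b^*$ via scale-function harmonicity, and check that the HJB holds with equality above $b^*$. The one stylistic detour is your concavity argument: you work through the monotonicity of $J^{(q,r)}$ (a true fact, provable as you indicate), whereas the paper shortcuts this by first collapsing $v_{b^*}'(x)=Z^{(q,r)}(b^*-x)$ via condition $\mathfrak{C}_{b^*}$ and then using $Z^{(q,r)\prime}=\tfrac{q}{r+q}\Phi(q+r)J^{(q,r)}>0$, which makes $v_{b^*}''(x)=-Z^{(q,r)\prime}(b^*-x)<0$ immediate from the positivity of $J^{(q,r)}$ already recorded in \eqref{z1}.
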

In order to prove Theorem \ref{ver_theo}, we shall provide the verification lemma and show that $v_{b^*}$ satisfies the stated conditions. We call a measurable function $g$ \emph{sufficiently smooth} if $g$ is $C^1 (0,\infty)$ (resp.\ $C^2 (0,\infty)$) when $X$ has paths of bounded (resp.\ unbounded) variation.
We let $\mathcal{L}$ be the operator acting on a sufficiently smooth function $g$, defined by
\begin{equation}
\begin{split}
\mathcal{L} g(x)&:= -\gamma g'(x)+\frac{\sigma^2}{2}g''(x) +\int_{(0,\infty)}[g(x + z)-g(x)-g'(x)z\mathbf{1}_{\{0<z<1\}}]\Pi(\mathrm{d}z). \label{generator}
\end{split}
\end{equation}

Here, we give a generalization of Lemma 3.4 in \cite{ATW}, for a general spectrally positive \lev process.
\begin{lemma}[Verification lemma]
	\label{verificationlemma}
	Suppose $\hat{\pi} \in \mathcal{A}$ is such that $v_{\hat{\pi}}$ is sufficiently smooth on $(0,\infty)$, right-continuous at zero with
	\begin{align}
	v_{\hat{\pi}}(0+) = \rho, \label{v_0_continuity}
	\end{align}
	and satisfies
	\begin{align}
	\label{HJB-inequality}
	(\mathcal{L} - q)v_{\hat{\pi}}(x)+r\max_{0\leq l\leq x}\{l+v_{\hat{\pi}}(x-l)-v_{\hat{\pi}}(x)\}\leq 0,  \quad x > 0.
	\end{align} 
	Then $v_{\hat{\pi}}(x)=v(x)$ for all $x\geq0$ and hence $\hat{\pi}$ is an optimal strategy.
\end{lemma}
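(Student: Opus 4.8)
The plan is to follow the standard dynamic-programming/martingale argument: show that for \emph{any} admissible $\pi \in \mathcal{A}$, the value $v_{\hat{\pi}}$ dominates $v_\pi$, so that $v_{\hat{\pi}} \geq v$; since trivially $v_{\hat{\pi}} \leq v$, equality follows and $\hat{\pi}$ is optimal. The core is an application of It\^o's formula (in the form appropriate for semimartingales with jumps) to the discounted process $e^{-qt} v_{\hat{\pi}}(U^\pi(t))$ along a fixed admissible strategy $\pi$.

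First I would fix $\pi \in \mathcal{A}$ and introduce a localizing sequence of stopping times $(T_n)_{n\geq 1}$ increasing to $\tau_0^\pi$, needed because $v_{\hat\pi}$ is only $C^1$ (resp.\ $C^2$) on $(0,\infty)$ and because the stochastic integrals against the compensated Poisson random measure of $X$ and against the jumps of $N^r$ must be made true martingales. Applying It\^o's formula to $e^{-qt} v_{\hat\pi}(U^\pi(t))$ on $[0, \tau_0^\pi \wedge T_n]$, the continuous-variation and small-jump terms of $X$ combine into $\int_0^{t} e^{-qs}(\mathcal{L}-q)v_{\hat\pi}(U^\pi(s-))\,\mathrm{d}s$ plus a local martingale; the dividend jumps at the times $T(i) \in \mathcal{T}_r$ contribute $e^{-qT(i)}\big(v_{\hat\pi}(U^\pi(T(i)-)-\nu^\pi(T(i))) - v_{\hat\pi}(U^\pi(T(i)-))\big)$, and compensating the Poisson process $N^r$ (rate $r$) turns the sum of these, together with the paid dividends $\int e^{-qs}\nu^\pi(s)\mathrm{d}N^r(s)$, into $r\int_0^t e^{-qs}\big(\nu^\pi(s) + v_{\hat\pi}(U^\pi(s-)-\nu^\pi(s)) - v_{\hat\pi}(U^\pi(s-))\big)\mathrm{d}s$ plus a martingale. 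Since $0 \leq \nu^\pi(s) \leq U^\pi(s-)$ by the admissibility constraint \eqref{surplus_constraint}, the integrand is bounded above by $\max_{0\leq l \leq U^\pi(s-)}\{l + v_{\hat\pi}(U^\pi(s-)-l) - v_{\hat\pi}(U^\pi(s-))\}$, so invoking the HJB inequality \eqref{HJB-inequality} the whole drift term is $\leq 0$. Rearranging and taking expectations kills the martingale part and yields
\[
v_{\hat\pi}(x) \geq \E_x\Big(\int_{[0,\tau_0^\pi \wedge T_n]} e^{-qs}\nu^\pi(s)\,\mathrm{d}N^r(s) + e^{-q(\tau_0^\pi\wedge T_n)} v_{\hat\pi}(U^\pi(\tau_0^\pi \wedge T_n))\Big).
\]
Then I would let $n \to \infty$: by monotone/dominated convergence the dividend term converges to $\E_x\int_{[0,\tau_0^\pi]}e^{-qs}\mathrm{d}L^\pi(s)$, and on $\{\tau_0^\pi < \infty\}$ the boundary term converges (using right-continuity of $v_{\hat\pi}$ at zero and \eqref{v_0_continuity}, since $U^\pi(\tau_0^\pi)$ can only be $0$ or negative — if it undershoots below $0$ the surplus process is killed and the terminal payoff $\rho = v_{\hat\pi}(0+)$ is collected; on $\{\tau_0^\pi=\infty\}$ the discount factor $e^{-qT_n}\to 0$ while $v_{\hat\pi}$ is controlled by its at most linear growth) to $\rho\,e^{-q\tau_0^\pi}$. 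Hence $v_{\hat\pi}(x) \geq v_\pi(x)$ for all $\pi\in\mathcal{A}$, giving $v_{\hat\pi}\geq v$ and therefore $v_{\hat\pi} = v$.

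The main obstacle will be the limiting arguments in the last step — specifically, justifying the passage to the limit in the terminal term $e^{-q(\tau_0^\pi\wedge T_n)}v_{\hat\pi}(U^\pi(\tau_0^\pi\wedge T_n))$. This requires (a) a growth bound on $v_{\hat\pi}$ — at most linear, which one reads off from the explicit forms in the excerpt — to handle the event $\{\tau_0^\pi = \infty\}$ via $e^{-qT_n}\E_x[\cdots]\to 0$; and (b) careful handling of the overshoot of $U^\pi$ at ruin, distinguishing creeping (continuous passage to $0$, only possible with a Gaussian component) from a downward jump of $X$ across $0$, and checking that $v_{\hat\pi}$ evaluated just after ruin equals $\rho$ by the assumed right-continuity at zero \eqref{v_0_continuity}. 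A secondary technical point is verifying that each stochastic-integral term produced by It\^o's formula is a genuine martingale (not merely a local one) on the stopped interval, which is where the localizing sequence $(T_n)$ earns its keep; this is routine given the integrability built into $\mathcal{A}$ and the structure of $X$'s L\'evy triplet. Since the excerpt says the proof is deferred to the appendix, I would present this as the skeleton and relegate the measure-theoretic bookkeeping there.
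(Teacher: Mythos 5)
Your proposal matches the paper's proof in all essentials: localize with stopping times that keep $U^\pi$ inside $(1/n,n)$, apply It\^o/change of variables to $e^{-q(t\wedge T_n)}v_{\hat\pi}(U^\pi(t\wedge T_n))$, compensate $N^r$ and the Poisson random measure of $X$, bound the dividend term by the maximum using \eqref{surplus_constraint}, invoke \eqref{HJB-inequality}, and pass to the limit (the paper uses Fatou, noting $v_{\hat\pi}\geq\rho\wedge 0$, together with \eqref{v_0_continuity}). The only slight inaccuracy is in your side remark on the ruin mechanism: since $X$ is spectrally positive, $U^\pi$ has no downward jumps at all and can only reach $0$ continuously (no Gaussian component needed), so there is no overshoot case to handle.
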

\begin{proof}
	See Appendix \ref{Appendix_A}.
\end{proof}
\par In the rest of this section, we will show that our candidate value function $v_{b^*}$ satisfies the sufficient condition \eqref{HJB-inequality} (the condition \eqref{v_0_continuity} is clearly satisfied). Recall from Lemma \ref{smooth_fit_prob1} that $v_{b^*}$ is sufficiently smooth.  

\begin{lemma}\label{cond_b^*}
(i) Suppose $b^* > 0$. The function $v_{b^*}$ is strictly increasing and concave on $(0, \infty)$,
 and $v_{b^*}'(b^*)=1$.

(ii) Suppose $b^* = 0$. We have that $v_{b^*}'(x)\leq 1$ for all $x > 0$.
\end{lemma}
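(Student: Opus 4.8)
**Proof proposal for Lemma \ref{cond_b^*}.**

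The plan is to work directly from the explicit formulas \eqref{vf5} and \eqref{value_function_zero}, exploiting the representation of the derivatives of $v_b$ in \eqref{vf_der1}--\eqref{vf_der2} together with the positivity facts $J^{(q,r)} > 0$, $W^{(q)} \geq 0$, and the sign of the constant $-(H^{(q,r)}(b^*)+\rho)/Z^{(q,r)}(b^*) = 1/\Phi(q+r) > 0$ furnished by $\mathfrak{C}_{b^*}$.

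For part (i), suppose $b^* > 0$. Substituting $\mathfrak{C}_{b^*}$ into the first two lines of \eqref{vf_der1} gives, for $x \in (0,\infty) \setminus \{b^*\}$,
\begin{align*}
v_{b^*}'(x) &= \frac{r}{r+q} Z^{(q)}(b^*-x) + \frac{q}{r+q} J^{(q,r)}(b^*-x) = Z^{(q,r)}(b^*-x), \\
v_{b^*}''(x) &= -\frac{rq}{r+q} W^{(q)}(b^*-x) - \frac{q}{r+q} \Phi(q+r) J^{(q,r)}(b^*-x) + \frac{rq}{r+q} W^{(q)}(b^*-x) = -\frac{q}{r+q}\Phi(q+r) J^{(q,r)}(b^*-x),
\end{align*}
where I used \eqref{z2} in the first line and the very same grouping (with the $W^{(q)}$ terms cancelling) in the second; the second identity also matches $Z^{(q,r)\prime}$ via \eqref{Z_q_r_der}. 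Since $J^{(q,r)} > 0$ everywhere by \eqref{z1}, we get $v_{b^*}'' < 0$ on $(0,\infty)$, so $v_{b^*}$ is strictly concave; the degree of smoothness from Lemma \ref{smooth_fit_prob1} guarantees $v_{b^*}''$ is continuous (resp.\ that $v_{b^*}'$ is $C^1$) so there is no issue at $b^*$. For monotonicity, note $Z^{(q,r)}(y) \geq 1$ for $y \geq 0$ (since $Z^{(q)} \geq 1$ and $J^{(q,r)} \geq 1$ there, the latter because the integral in \eqref{z1} is nonnegative and $\Phi(q+r) \geq 0$) while by \eqref{z_below_zero} and the definitions $Z^{(q,r)}(y) = \frac{r}{r+q} + \frac{q}{r+q} e^{\Phi(q+r) y} > 0$ for $y < 0$; hence $v_{b^*}' = Z^{(q,r)}(b^*-\cdot) > 0$ throughout, and at $x = b^*$ we get $v_{b^*}'(b^*) = Z^{(q,r)}(0) = 1$ directly from \eqref{z2} since $Z^{(q)}(0) = J^{(q,r)}(0) = 1$.

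For part (ii), suppose $b^* = 0$. From the last line of \eqref{value_function_zero} I differentiate to obtain
\begin{align*}
v_0'(x) = \frac{r}{r+q}\Big[ 1 - \Phi(q+r)\Big(\frac{\psi'(0+)}{r+q} + \frac{\rho q}{r}\Big) e^{-\Phi(q+r)x}\Big], \qquad x > 0.
\end{align*}
The bracketed coefficient $\frac{\psi'(0+)}{r+q} + \frac{\rho q}{r}$ is, up to the positive factor $\frac{r}{(r+q)\Phi(q+r)}$ after rearrangement, exactly $-\big(\psi'(0+) - I_{r,q}\big)$ times a positive constant; indeed, multiplying through, $\Phi(q+r)\big(\frac{\psi'(0+)}{r+q} + \frac{\rho q}{r}\big) \geq 0$ is equivalent to $\psi'(0+) \geq -\frac{q}{r}(q+r)\big(\rho + \frac{1}{\Phi(q+r)}\big) = I_{r,q}$, which is precisely the standing assumption \eqref{b_zero_criteria} under which $b^* = 0$ is selected. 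Hence the subtracted term is nonnegative for all $x > 0$, and $e^{-\Phi(q+r)x} \leq 1$ forces $v_0'(x) \leq \frac{r}{r+q}\cdot 1 < 1$ if the coefficient is strictly positive, and $v_0'(x) \leq \frac{r}{r+q} < 1$ if it is zero. (If $\Phi(q+r) = 0$, i.e.\ the degenerate case, the formula gives $v_0' \equiv \frac{r}{r+q} < 1$ directly.) Either way $v_0'(x) \leq 1$, completing the proof. The only mild subtlety — the ``hard part'' such as it is — is bookkeeping the equivalence between the sign of that coefficient and condition \eqref{b_zero_criteria}, together with making sure the concavity argument in (i) is applied on all of $(0,\infty)$ using the smoothness granted by Lemma \ref{smooth_fit_prob1} rather than separately on $(0,b^*)$ and $(b^*,\infty)$.
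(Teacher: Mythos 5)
Part (i) is correct and is essentially the paper's argument: substitute $\mathfrak{C}_{b^*}$ into \eqref{vf_der1} to get $v_{b^*}'=Z^{(q,r)}(b^*-\cdot)>0$ and $v_{b^*}''=-\tfrac{q}{r+q}\Phi(q+r)J^{(q,r)}(b^*-\cdot)<0$, with $v_{b^*}'(b^*)=Z^{(q,r)}(0)=1$. (Your side remark that $J^{(q,r)}\geq 1$ on $[0,\infty)$ is not justified by the reason you give — $J^{(q,r)}$ is a product of a factor $\geq 1$ and a factor $\leq 1$ — but it is also not needed: positivity of $J^{(q,r)}$ and $Z^{(q)}\geq 1$ already give $Z^{(q,r)}>0$.)

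Part (ii) contains a genuine error. You claim that $\Phi(q+r)\bigl(\tfrac{\psi'(0+)}{r+q}+\tfrac{\rho q}{r}\bigr)\geq 0$ is equivalent to $\psi'(0+)\geq I_{r,q}$. It is not: nonnegativity of that coefficient is equivalent to $\psi'(0+)\geq -\tfrac{\rho q(r+q)}{r}$, whereas $I_{r,q}=-\tfrac{\rho q(r+q)}{r}-\tfrac{q(q+r)}{r\Phi(q+r)}$ is strictly smaller. Hence under \eqref{b_zero_criteria} the coefficient $\tfrac{\psi'(0+)}{r+q}+\tfrac{\rho q}{r}$ may well be strictly negative — for instance $\rho=0$ and $I_{r,q}\leq\psi'(0+)<0$, which is exactly the regime the paper emphasizes where $b^*=0$ even though $X$ drifts to $+\infty$. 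In that case $v_0$ is strictly concave, $v_0'(x)>r/(r+q)$ for all $x>0$, and your bound $v_0'\leq r/(r+q)$ fails. The correct argument (the paper's) splits on the sign of the coefficient: when it is positive, $v_0$ is convex and $v_0'$ increases to $r/(r+q)<1$; when it is negative, $v_0$ is concave and $\sup_{x>0}v_0'(x)=v_0'(0+)=\tfrac{r}{r+q}\bigl[1-\Phi(q+r)\bigl(\tfrac{\psi'(0+)}{r+q}+\tfrac{\rho q}{r}\bigr)\bigr]$, which is $\leq 1$ precisely because of the full condition \eqref{b_zero_criteria}, i.e.\ $\Phi(q+r)\bigl(\tfrac{\psi'(0+)}{r+q}+\tfrac{\rho q}{r}\bigr)\geq -\tfrac{q}{r}$. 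You need to supply this second case; as written your proof of (ii) covers only a strict subset of the parameter range on which $b^*=0$.
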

\begin{proof}
(i) Suppose $b^* > 0$. By \eqref{opt_thres} and \eqref{vf_der1}, for all $x > 0$, 
\begin{align}
v_{b^*}'(x)=\frac{r}{r+q}Z^{(q)}(b^*-x)+\frac{q}{r+q}J^{(q,r)}(b^*-x) = Z^{(q,r)}(b^*-x)>0, \label{v_prime_b_star}
\end{align}
and for the second derivative, by \eqref{Z_q_r_der},
\begin{align*}
	v_{b^*}''(x)
	&=-\frac{q}{r+q}\Phi(q+r)J^{(q,r)}(b^*-x)<0.
	\end{align*}
In other words, $v_{b^*}$ is strictly increasing and concave on $(0, \infty)$. In addition,
 by \eqref{v_prime_b_star}, we have $v_{b^*}'(b^*)=1$.

(ii) Suppose $b^*=0$. Differentiating \eqref{value_function_zero}, for $x > 0$,
\begin{align}
v_0'(x)&= \frac r {r+q} \Big[1 -  \Phi(q+r) \Big(\frac {\psi'(0+)} {r+q} + \frac {\rho q} r\Big) {\rm e}^{- \Phi(q+r)x} \Big], \label{der_v_0} \\
v_0''(x)&=  \frac r {r+q}   \Phi^2(q+r) \Big(\frac {\psi'(0+)} {r+q} + \frac {\rho q} r\Big) {\rm e}^{- \Phi(q+r)x}. \label{der_v_0_2}
\end{align}
In particular, 
\begin{align*}
v_0'(0+)&= \frac r {r+q}  \Big[ 1-   \Phi(q+r) \Big(\frac {\psi'(0+)} {r+q} + \frac {\rho q} r\Big) \Big],
\end{align*}
which is less than or equal to $1$ by \eqref{b_zero_criteria}.  In addition, we have $v_0'(x) \xrightarrow{x \uparrow \infty} r / (r+q)$ by \eqref{der_v_0}. 

(a) If $\psi'(0+) + \rho q (r+q)/r> 0$, then, by \eqref{der_v_0_2}, $v_0$ is strictly convex; hence $ v_0'(0+) < v_0' (x) < r/ (r+q) \leq 1$ for $x > 0$.  

(b) If $\psi'(0+) + \rho q (r+q)/r< 0$, then, by \eqref{der_v_0_2}, $v_0$ is strictly concave; hence $1 \geq v_0'(0+) >  v_0' (x) > r/ (r+q) >0$ for $x > 0$.

(c) If $\psi'(0+) + \rho q (r+q)/r= 0$, then $1 > v_0'(x) = r / (r+q) > 0$ for $x > 0$.
\end{proof}
Next, by an application of Lemma \ref{cond_b^*} (i) and (ii), respectively, for $b^* > 0$ and $b^* = 0$, the following results are immediate.
\begin{lemma}\label{cond_max_v}
	For $b^*\geq0$ we have that
	\begin{equation}\label{max_cond}
	\max_{0\leq l\leq x} \{ l+v_{b^*}(x-l)-v_{b^*}(x) \} =
	\begin{cases} 0 &\mbox{if } x \in[0,b^*], \\ 
	x-b^*+v_{b^*}(b^*)-v_{b^*}(x) & \mbox{if } x \in (b^*,\infty). \end{cases}
	\end{equation}
\end{lemma}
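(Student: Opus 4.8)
The plan is to reduce the claim to the monotonicity and concavity properties of $v_{b^*}$ already recorded in Lemma \ref{cond_b^*}. Fix $x > 0$ and consider the function $l \mapsto \phi(l) := l + v_{b^*}(x-l) - v_{b^*}(x)$ on the interval $[0,x]$. Clearly $\phi(0) = 0$, so the maximum over $[0,x]$ is always $\geq 0$. Since $v_{b^*}$ is differentiable on $(0,\infty)$, we have $\phi'(l) = 1 - v_{b^*}'(x-l)$ for $l \in (0,x)$, and the sign of $\phi'$ is governed by where $v_{b^*}'$ crosses the value $1$.

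First I would treat the case $b^* > 0$. By Lemma \ref{cond_b^*}(i), $v_{b^*}$ is strictly increasing and concave on $(0,\infty)$ with $v_{b^*}'(b^*) = 1$; concavity then forces $v_{b^*}'(y) > 1$ for $y \in (0, b^*)$ and $v_{b^*}'(y) < 1$ for $y > b^*$. Hence $\phi'(l) = 1 - v_{b^*}'(x-l)$ is negative when $x - l < b^*$ and positive when $x-l > b^*$, i.e.\ $\phi$ is decreasing on $(\max(0,x-b^*), x)$ and increasing on $(0, x - b^*)$ when $x > b^*$. For $x \in (0, b^*]$ this gives $x - l < b^*$ for all $l \in (0,x)$, so $\phi$ is nonincreasing on $[0,x]$ and the maximum is attained at $l = 0$, giving value $0$. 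For $x > b^*$, $\phi$ is increasing up to $l = x - b^*$ and decreasing afterwards, so the maximum is attained at $l^* = x - b^*$, giving value $\phi(x-b^*) = (x - b^*) + v_{b^*}(b^*) - v_{b^*}(x)$; this is exactly the second branch of \eqref{max_cond}. (One should also note $l^* = x - b^* \in [0,x]$ since $0 < b^* < x$, so it is an admissible maximizer.)

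For $b^* = 0$, by Lemma \ref{cond_b^*}(ii) we have $v_{b^*}'(y) \leq 1$ for all $y > 0$, hence $\phi'(l) = 1 - v_{b^*}'(x - l) \geq 0$ on $(0,x)$, so $\phi$ is nondecreasing on $[0,x]$ and its maximum over $[0,x]$ is attained at $l = x$. The value is $\phi(x) = x + v_{b^*}(0) - v_{b^*}(x) = x - b^* + v_{b^*}(b^*) - v_{b^*}(x)$ since $b^* = 0$; here $v_{b^*}(0)$ is understood as the right-limit $v_{b^*}(0+) = \rho$, consistent with \eqref{v_0_continuity} and formula \eqref{value_function_zero}. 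Note also that when $b^* = 0$ the interval $[0, b^*] = \{0\}$ and the first branch of \eqref{max_cond} reads $\max_{l = 0}\{\cdots\} = 0$, which is trivially consistent. This exhausts both cases, establishing \eqref{max_cond}.

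The only mild subtlety — and the single point deserving care rather than any real obstacle — is the behaviour at the endpoint $l = x$, where $v_{b^*}(x - l) = v_{b^*}(0)$ must be interpreted via the right-continuous extension at zero; this is precisely the content of \eqref{v_0_continuity}, so no extra argument is needed. Everything else is an elementary monotonicity analysis of the single-variable function $\phi$ using the sign information on $v_{b^*}'$ supplied by the preceding lemma.
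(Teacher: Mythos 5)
Your proof is correct and follows exactly the approach the paper gestures at (the paper simply declares the result "immediate" from Lemma \ref{cond_b^*}); you make explicit the sign analysis of $\phi'(l) = 1 - v_{b^*}'(x-l)$ driven by the strict concavity and $v_{b^*}'(b^*)=1$ in case (i), and by $v_{b^*}'\leq 1$ in case (ii), which is precisely the intended argument.
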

%
We shall  next show the following.
\begin{lemma}\label{generator_on_v}
If $b^* > 0$, we have 
\begin{equation}\label{gen_on_v}
(\mathcal{L}-q)v_{b^*}(x)=
\begin{cases} 0 &\mbox{if } x \in(0,b^*], \\ 
\displaystyle \frac{qr}{r+q} \Big( (b^*-x)+\frac{1-{\rm e}^{\Phi(q+r)(b^*-x)}}{\Phi(q+r)} \Big) & \mbox{if } x \in (b^*,\infty). \end{cases}
\end{equation}
If $b^* = 0$, we have 
\begin{align}
(\mathcal{L}-q) v_0(x)
= \frac r {r+q}  \Big[ - \Big(\frac {r \psi'(0+)} {r+q} + q \rho \Big)  (1- {\rm e}^{-\Phi(q+r)x})- qx \Big],
\quad x >0.\label{generator_zero_case}
\end{align}
\end{lemma}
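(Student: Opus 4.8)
The plan is to exploit the probabilistic identities \eqref{fi1} and the scale-function machinery rather than grinding through the integro-differential operator \eqref{generator} directly. First I would treat the case $b^* > 0$. The key observation is that the function $y \mapsto Z^{(q,r)}(y)$ (and hence, from \eqref{vf3}, the function $v_{b^*}$ restricted to $x \le b^*$) is built so that it is $(\mathcal{L}-q)$-harmonic for the underlying process before reflection; more precisely, I would note that the survival probability $\E_x({\rm e}^{-q\tau_0^b})= Z^{(q,r)}(b-x)/Z^{(q,r)}(b)$ and the reflected NPV $\E_x(\int {\rm e}^{-qt}\diff L_r^b(t))$ both solve $(\mathcal{L}-q)g = 0$ on $(0,b)$ by standard strong-Markov / Dynkin arguments (exactly as in \cite{APY}), since on $(0,b)$ no reflection occurs and the process behaves like $X$ killed at rate $q$; adding the $\rho$-multiple of the first and using the simplified form \eqref{vf5}, we get $(\mathcal{L}-q)v_{b^*}(x) = 0$ on $(0,b^*]$. (Alternatively, one can verify this by direct substitution using $(\mathcal{L}-q)W^{(q)} = 0$ and $(\mathcal{L}-q)Z^{(q)} = 0$ on $(0,\infty)$, together with the analogous identity $(\mathcal{L}-q)J^{(q,r)} = r\,Z^{(q)}$ on $(0,\infty)$ which follows from \eqref{z1} and the scale-function ODEs; but the probabilistic route is cleaner and avoids the boundary subtleties at $x = b^*$.)

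For $x > b^*$ the plan is to use the explicit closed form of $v_{b^*}$ there, namely the boxed expression following \eqref{vf_db_a} (with $\mathfrak{C}_{b^*}$ plugged in so that $(H^{(q,r)}(b^*)+\rho)/Z^{(q,r)}(b^*) = -1/\Phi(q+r)$), which gives for $x > b^*$
\[
v_{b^*}(x) = (r+q)^{-1}\Big[ -\tfrac{1}{\Phi(q+r)}\big(r + q\,{\rm e}^{\Phi(q+r)(b^*-x)}\big) - r\big(b^*-x+\tfrac{\psi'(0+)}{q}\big)\Big].
\]
Here $v_{b^*}$ is affine plus an exponential ${\rm e}^{-\Phi(q+r)x}$. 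For the exponential part one uses $\psi(\Phi(q+r)) = q+r$ so that $(\mathcal{L}-q)\,{\rm e}^{\Phi(q+r)(\cdot)} = (\psi(\Phi(q+r)) - q)\,{\rm e}^{\Phi(q+r)(\cdot)} = r\,{\rm e}^{\Phi(q+r)(\cdot)}$ — but one must be careful, since for $x$ near $b^*$ the jump term in $\mathcal{L}$ reaches back below $b^*$ where $v_{b^*}$ is \emph{not} given by this formula; so the computation of $(\mathcal{L}-q)v_{b^*}(x)$ for $x > b^*$ should be done as $(\mathcal{L}-q)$ applied to the \emph{global} $v_{b^*}$, and I would handle this by writing $v_{b^*} = g_1 + g_2$ where $g_1$ is the global smooth extension of the $x>b^*$ formula and $g_2 := v_{b^*} - g_1$ is supported (as a discrepancy) on $(0,b^*)$, then computing $(\mathcal{L}-q)g_1$ by the affine-plus-exponential rule and $(\mathcal{L}-q)g_2$ via the jump integral over the region where the discrepancy lives. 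Matching this against the already-established value $0$ on $(0,b^*]$ and using continuity of $(\mathcal{L}-q)v_{b^*}$ across $b^*$ (guaranteed by the smoothness in Lemma \ref{smooth_fit_prob1}) pins down the constant, yielding the stated $\frac{qr}{r+q}\big((b^*-x) + \frac{1-{\rm e}^{\Phi(q+r)(b^*-x)}}{\Phi(q+r)}\big)$.

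The case $b^* = 0$ is a direct computation: I would differentiate the last line of \eqref{value_function_zero} (equivalently use \eqref{der_v_0}--\eqref{der_v_0_2}) and substitute into \eqref{generator}. Since $v_0(x) = A x + B + C\,{\rm e}^{-\Phi(q+r)x}$ for explicit constants $A = r/(r+q)$, $C = \frac{r}{r+q}(\frac{\psi'(0+)}{r+q}+\frac{\rho q}{r})$, one applies $(\mathcal{L}-q)$ termwise: $(\mathcal{L}-q)(Ax+B) = -A\psi'(0+) - q(Ax+B)$ using $\mathcal{L}x = -\psi'(0+)$ and $\mathcal{L}1 = 0$ (this is where $\psi'(0+) = -\E[X(1)]$ and the integrability assumption are used), and $(\mathcal{L}-q)\,{\rm e}^{-\Phi(q+r)x} = r\,{\rm e}^{-\Phi(q+r)x}$ as above; collecting terms and simplifying with $B = -\frac{r}{r+q}(\frac{\psi'(0+)}{r+q}-\rho)$ gives \eqref{generator_zero_case}. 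The main obstacle is the bookkeeping in the $x > b^*$ case — specifically, correctly accounting for the contribution of the L\'evy jump integral that straddles the barrier $b^*$, since $v_{b^*}$ is only piecewise given by elementary functions there; the smoothness result of Lemma \ref{smooth_fit_prob1} ($C^2$ resp.\ $C^3$) is exactly what makes $(\mathcal{L}-q)v_{b^*}$ well-defined and continuous across $b^*$ and is the tool I would lean on to avoid a delicate direct evaluation of that integral.
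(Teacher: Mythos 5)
Your treatment of $(0, b^*]$ by the strong Markov property of $X$ stopped at first exit from $(0,b^*)$ is sound and is essentially a repackaging of the paper's argument, which instead shows $(\mathcal{L}-q)H^{(q,r)}(b^*-\cdot)=0$, $(\mathcal{L}-q)Z^{(q)}(b^*-\cdot)=0$, and $(\mathcal{L}-q)J^{(q,r)}(b^*-\cdot)=0$ on $(0,b^*)$ separately (the latter via the identity \eqref{mart_Z}) and then combines by linearity. Your parenthetical alternative, however, asserts ``$(\mathcal{L}-q)J^{(q,r)}=r\,Z^{(q)}$ on $(0,\infty)$''; this is not correct — the martingale argument gives $(\mathcal{L}-q)J^{(q,r)}(b^*-x)=0$ for $0<x<b^*$, not $rZ^{(q)}$, and if your identity held the claimed vanishing of $(\mathcal{L}-q)v_{b^*}$ on $(0,b^*]$ would fail.

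The more serious gap is in the case $x>b^*$. You worry that ``for $x$ near $b^*$ the jump term in $\mathcal{L}$ reaches back below $b^*$,'' but this is backwards: $X$ is \emph{spectrally positive}, so $\Pi$ is supported on $(0,\infty)$ and the integrand in \eqref{generator} evaluates $v_{b^*}$ at $x+z>x>b^*$ only. Hence for $x>b^*$ the operator sees only the affine-plus-exponential expression \eqref{v_rewritten}, and the result follows from the two elementary identities $(\mathcal{L}-q)(b^*-x)=\psi'(0+)-q(b^*-x)$ and $(\mathcal{L}-q)\,{\rm e}^{\Phi(q+r)(b^*-x)}=r\,{\rm e}^{\Phi(q+r)(b^*-x)}$ (the latter because $\psi(\Phi(q+r))=q+r$), which is precisely what the paper does. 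Your proposed decomposition $v_{b^*}=g_1+g_2$ with $g_2$ supported on $(0,b^*]$ is therefore unnecessary; moreover, as described it would not close the argument, since $(\mathcal{L}-q)v_{b^*}$ on $(b^*,\infty)$ is a nonconstant function and cannot be ``pinned down'' by a single continuity condition at $b^*$ — it is determined outright by the direct computation. (It is the region $(0,b^*)$, not $(b^*,\infty)$, where the jump integral crosses the barrier; there the martingale/scale-function argument handles it because the formula \eqref{vf5} in fact holds globally.) The $b^*=0$ computation is correct and matches the paper.
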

\begin{proof} Suppose $b^* > 0$.
(i) By the proof of Theorem 2.1 in \cite{BKY}, we have that
\begin{align}
(\mathcal{L}-q)H^{(q,r)}(b^*-x)=0, \quad  0 <  x < b^*. \label{H_harmonic}
\end{align}
On the other hand, by the identity (3.19) in \cite{APP2007} and \eqref{laplace_in_terms_of_z}, it follows that, for any $0 < x < b$,
\begin{equation}\label{mart_Z}
	J^{(q,r)}(x)=\tilde{\E}_x\left({\rm e}^{-q\tilde{\tau}_0^-}{\rm e}^{\Phi(q+r)\tilde{X}(\tilde{\tau}_0^-)};\tilde{\tau}_0^-<\infty\right)+ \frac{r W^{(q)}(b) }{\Phi(q+r)-\Phi(q)} \tilde{\E}_x\left( {\rm e}^{-q \tilde{\tau}_b^+}; \tilde{\tau}_0^- > \tilde{\tau}_b^+\right),
\end{equation}
where $\tilde{\mathbb{E}}_x$ is the law of the spectrally negative \lev process $\tilde{X}:=-X$ with $\tilde{X}(0) = x$,  $\tilde{\tau}_0^- := \inf \{ t > 0: \tilde{X}(t) < 0\}$, and $\tilde{\tau}_b^+ := \inf \{ t > 0: \tilde{X}(t) > b\}$. 
By this and the  strong Markov property (see Section 3.5 of \cite{KKR}), the stopped process
\begin{equation*}
\{{\rm e}^{-q(t\wedge T_{(0,b^*)})}J^{(q,r)}(\tilde{X}(t\wedge T_{(0,b^*)})); t\geq 0\},
\end{equation*}
with $ T_{(0,b^*)}:=\inf\{t>0:\tilde{X} (t)\not\in(0,b^*)\}$ is a martingale. Hence, following the steps in the proof of Theorem 2.1 in \cite{BKY} and noting that $y \mapsto J^{(q,r)}(y)$ on $(0, \infty)$ is sufficiently smooth, we can conclude that 
\begin{align}
(\mathcal{L}-q) J^{(q,r)}(b^*-x)=0, \quad 0 < x < b^*. \label{Z_r_harmonic}
\end{align}
Finally by the proof of Theorem 2.1 in \cite{BKY} we have
\begin{align}\label{Z_q_harmonic}
(\mathcal{L}-q) Z^{(q)}(b^*-x)=0, \quad 0 < x < b^*.
\end{align}

By  \eqref{Z_r_harmonic} and \eqref{Z_q_harmonic}, we have $(\mathcal{L}-q) Z^{(q,r)} (b^* - x) = 0$.  Applying this and \eqref{H_harmonic} in \eqref{vf5}, we have the result for $0 < x < b^*$.

(ii) For the case $x>b^*$, first we note the following
 \begin{align} \label{generator_parts}
 \begin{split}
 	(\mathcal{L}-q)(b^*-x)&=\psi'(0+)- q(b^*-x)\quad\text{and}\\
 	(\mathcal{L}-q){\rm e}^{\Phi(q+r)(b^*-x)}&=r {\rm e}^{\Phi(q+r)(b^*-x)}.
	\end{split}
 \end{align}
 Now, the equality \eqref{vf5}, for $x > b^*$, can be written, 
 \begin{align} \label{v_rewritten}
 v_{b^*}(x)=-\frac{r}{r+q}(b^*-x)-\frac{r}{r+q} \Big( \frac {\psi'(0+)} q + \frac{1}{\Phi(q+r)} \Big) -\frac{q}{r+q}\frac{{\rm e}^{\Phi(q+r)(b^*-x)}}{\Phi(q+r)}.
 \end{align}
 Therefore we have for $x>b^*$, by using \eqref{generator_parts} and \eqref{v_rewritten},
 \begin{align*}
 	(\mathcal{L}-q)v_{b^*}(x)
 	=\frac{qr}{r+q} \Big( (b^*-x)+\frac{1-{\rm e}^{\Phi(q+r)(b^*-x)}}{\Phi(q+r)} \Big).
 \end{align*}

Suppose $b^* = 0$. By \eqref{value_function_zero}, 
\begin{align*}
(\mathcal{L}-q) v_0(x)
&= \frac r {r+q}  (\mathcal{L}-q) \Big[ x -  \Big(\frac {\psi'(0+)} {r+q} - \rho \Big)   +  \Big(\frac {\psi'(0+)} {r+q} + \frac {\rho q} r\Big) {\rm e}^{- \Phi(q+r)x}\Big) \Big]\\
&= \frac r {r+q}  \Big[ -\psi'(0+) - qx  +  q \Big(\frac {\psi'(0+)} {r+q} - \rho \Big) + \Big(\frac {\psi'(0+)} {r+q} + \frac {\rho q} r\Big) r {\rm e}^{-\Phi(q+r)x}\Big],
\end{align*}
which reduces to \eqref{generator_zero_case}.
\end{proof}

Now we have all the elements to prove Theorem \ref{ver_theo}.
\begin{proof}[Proof of Theorem \ref{ver_theo}]
By Lemmas \ref{smooth_fit_prob1} and  \ref{verificationlemma}, it is sufficient to show \eqref{HJB-inequality} with $v_{\hat{\pi}}$ replaced with $v_{b^*}$. In particular, we show that \eqref{HJB-inequality} holds with equality.


(i) Suppose $b^* > 0$. For $x\leq b^*$, Lemmas  \ref{cond_max_v} and  \ref{generator_on_v} show \eqref{HJB-inequality} with equality.
Now for the case $x > b^*$, by substituting \eqref{vf5} in \eqref{max_cond}, 
\begin{align*}
\max_{0\leq l\leq x} \{ l+v_{b^*}(x-l)-v_{b^*}(x) \}
=- \frac{q}{r+q} \Big[ (b^*-x)+ \frac{1- {\rm e}^{\Phi(q+r)(b^*-x)}}{\Phi(q+r)} \Big].
\end{align*}
This together with Lemma \ref{generator_on_v} shows \eqref{HJB-inequality} with equality.

(ii) Suppose now $b^* = 0$.  Again, by substituting \eqref{value_function_zero} in \eqref{max_cond},
\begin{align*}
\max_{0\leq l\leq x} \{ l+v_{b^*}(x-l)-v_{b^*}(x) \} &=
x+v_{0}(0)-v_{0}(x) \\
&= x + \rho - \frac r {r+q} \Big[ x -  \Big(\frac {\psi'(0+)} {r+q} - \rho \Big)   +  \Big(\frac {\psi'(0+)} {r+q} + \frac {\rho q} r\Big) {\rm e}^{- \Phi(q+r)x}\Big) \Big]
\\&= \frac q {r+q}  x + \frac r {r+q} \Big(\frac {\psi'(0+)} {r+q} + \frac {\rho q} r\Big) \Big( 1 -  {\rm e}^{- \Phi(q+r)x}\Big). 
\end{align*}
Combining this and \eqref{generator_zero_case}, we have \eqref{HJB-inequality} with equality, as desired.
%
 \end{proof}

\begin{remark}[Connection with the classical case] \label{remark_convergence_r}
Suppose $\rho=0$. It is expected that as $r \rightarrow \infty$ the optimal barrier $b^*$ as well as the value function $v_{b^*}$ converge to those in the classical case (assuming $\E X_1 = -\psi'(0+) > 0$): $\tilde{b}^* := (\overline{Z}^{(q)})^{-1} ( -\psi'(0+) / q )$ and
\begin{equation}\label{eq:va-star}
\tilde{v}(x) :=
 - \overline{Z}^{(q)} (\tilde{b}^* -x) - \frac {\psi'(0+)} q, \quad x \geq 0, \end{equation}
as obtained in Bayraktar et al.\ \cite{BKY}.  

This can be easily confirmed as follows.  First, it is easy to see that the function $f(b)$ as in \eqref{def_f} converges, as $r \rightarrow \infty$, to $\overline{Z}^{(q)}(b)+ {\psi'(0+)} /q$, whose root becomes $\tilde{b}^*$. Moreover, in view of the form of the value function \eqref{vf5},
\begin{align}
-H^{(q,r)}(b-x)-\frac{Z^{(q,r)}(b-x)}{\Phi(q+r)}  \xrightarrow{r \rightarrow \infty}- \overline{Z}^{(q)} (b -x) - \frac {\psi'(0+)} q. \label{value_function_conv}
\end{align}
This is a rough illustration of how the convergence holds.  In Section \ref{section_numerics}, we numerically verify the convergence.
\end{remark}

\section{Solutions to the extension with classical bail-outs} \label{section_second_prob}
In this section, we solve the second problem as  defined in Section \ref{second_prob}.  To this end, we take essentially the same steps as in the previous section: first choosing the candidate barrier $b^\dagger$ using the smoothness conditions and then showing that $u_{b^\dagger}$ solves the required variational inequalities.  Because of the similarities of the forms of $v_b$ (for $\rho = 0$) and $u_b$ as in \eqref{vf3} and \eqref{vf_db_a}, the computation will be similar.  In addition, it turns out that the value function will have the same form (with the different barrier) as that in the first problem and hence many of the results in Section \ref{section_first_prob} can be reused. 

\subsection{Smooth fit and selection of the candidate barrier $b^\dagger$}
We first note that the expected NPV given in \eqref{vf_db_a}  can be obtained from \eqref{vf3} for $\rho = 0$, just by replacing $H^{(q,r)}(b)/Z^{(q,r)}(b)$ with $[{r Z^{(q)} (b)} / (r+q)-\beta  ] /Z^{(q,r)\prime}(b)$. Hence the analysis follows verbatim from Section \ref{sf_d} and we obtain the following lemma simply by modifying Lemma \ref{smooth_fit_prob1}.
\begin{lemma} \label{lemma_smoothness_capital_injection}Suppose $b > 0$ is such that the condition
\begin{equation}\label{opt_thres_bailout}
\widehat{\mathfrak{C}}_b: -\frac{1}{Z^{(q,r) \prime}(b)}\left(\frac{r Z^{(q)} (b)}{ r+q}-\beta\right)=\frac{1}{\Phi(q+r)}
\end{equation}
 is satisfied. Then, $u_{b}$ is $C^2 (0, \infty)$ for the case $X$ is of bounded variation, while it is $C^3 (0, \infty)$ for the case $X$ is of unbounded variation.
\end{lemma}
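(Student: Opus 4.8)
The plan is to exploit the structural parallel between $u_b$ and $v_b$ that is recorded just before the statement. Write
\[
\kappa_b := \frac{1}{Z^{(q,r)\prime}(b)}\left(\frac{r Z^{(q)}(b)}{r+q} - \beta\right),
\]
which is well defined and finite since $Z^{(q,r)\prime}(b) = \frac{q}{r+q}\Phi(q+r)J^{(q,r)}(b) > 0$ by \eqref{Z_q_r_der}. Then \eqref{vf_db_a} reads $u_b(x) = \kappa_b\, Z^{(q,r)}(b-x) - H^{(q,r)}(b-x)$, which is exactly the expression \eqref{vf3} with $\rho = 0$ and with the constant $\frac{H^{(q,r)}(b)+\rho}{Z^{(q,r)}(b)}$ replaced by $\kappa_b$. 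Since $x$ enters $u_b$ only through $Z^{(q,r)}(b-x)$ and $H^{(q,r)}(b-x)$ — precisely as in $v_b$ — the derivative formulas \eqref{vf_der1}--\eqref{vf_der2} carry over verbatim to $u_b$, $u_b'$, $u_b''$, $u_b'''$ upon this substitution.

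Then I would replay the three-step smooth-fit analysis of Section \ref{sf_d} with $v_b$ replaced by $u_b$ and $\frac{H^{(q,r)}(b)+\rho}{Z^{(q,r)}(b)}$ replaced by $\kappa_b$. First, $u_b$ is $C^1$ at $b$ for every $b > 0$, because the one-sided limits at $x = b$ of the formulas for $v_b'$ in \eqref{vf_der1} and \eqref{vf_der2} agree irrespective of the value of the prefactor. Next, matching $u_b''(b-)$ with $u_b''(b+)$ requires
\[
\frac{rq}{r+q}W^{(q)}(0)\bigl[1 + \kappa_b\,\Phi(q+r)\bigr] = 0,
\]
which by \eqref{eq:Wqp0} holds automatically in the unbounded variation case and is equivalent to $\kappa_b = -1/\Phi(q+r)$ in the bounded variation case. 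Finally, in the unbounded variation case, matching $u_b'''(b-)$ with $u_b'''(b+)$ requires
\[
\frac{rq}{r+q}W^{(q)\prime}(0+)\bigl[1 + \kappa_b\,\Phi(q+r)\bigr] = 0,
\]
which by \eqref{eq:Wqp0} again amounts to $\kappa_b = -1/\Phi(q+r)$. Since the condition $\widehat{\mathfrak{C}}_b$ in \eqref{opt_thres_bailout} is precisely the statement $\kappa_b = -1/\Phi(q+r)$ — the exact analog of $\mathfrak{C}_b$ in \eqref{opt_thres} — the claimed smoothness follows, with $C^\infty$ regularity away from $b$ being immediate from the smoothness of the scale functions on $(0,\infty)$.

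I do not expect a genuine obstacle here: the argument is essentially bookkeeping, and the only points that need to be verified carefully are (a) that $\rho$ enters $v_b$ in \eqref{vf3}, and hence the entire smooth-fit computation, solely through the constant $\frac{H^{(q,r)}(b)+\rho}{Z^{(q,r)}(b)}$ — so that replacing this constant by $\kappa_b$ is the only change — and (b) that the term $H^{(q,r)}(b-x)$ is literally identical in $u_b$ and $v_b$, so its contribution to each derivative, and to each one-sided limit at $b$, is unchanged. Once these two observations are in place, the proof of Lemma \ref{smooth_fit_prob1} transfers word for word.
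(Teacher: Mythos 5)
Your proposal is correct and matches the paper's approach exactly: the paper also notes that \eqref{vf_db_a} is obtained from \eqref{vf3} with $\rho=0$ by replacing $H^{(q,r)}(b)/Z^{(q,r)}(b)$ with $[rZ^{(q)}(b)/(r+q)-\beta]/Z^{(q,r)\prime}(b)$, and then states that the smooth-fit analysis of Section \ref{sf_d} transfers verbatim. You simply spell out the bookkeeping that the paper leaves implicit.
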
 

We shall now show the existence of $b$ such that \eqref{opt_thres_bailout} holds.  Differently from the first problem (see Lemma \ref{lemma_opt_thres}), such $b$ exists all the time thanks to the assumption that $\beta > 1$.
\begin{lemma} \label{lemma_existence_b_dagger}
	There exists a unique solution $\tilde{b}>0$ to the equation \eqref{opt_thres_bailout}. 
\end{lemma}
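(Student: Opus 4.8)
The plan is to mimic the proof of Lemma \ref{lemma_opt_thres}: rewrite the condition $\widehat{\mathfrak{C}}_b$ as the vanishing of an explicit auxiliary function $g$, show $g$ is strictly increasing, identify its limits at $0$ and $\infty$, and conclude existence and uniqueness of a root. Concretely, using \eqref{Z_q_r_der} we have $Z^{(q,r)\prime}(b)=\frac{q}{r+q}\Phi(q+r)J^{(q,r)}(b)>0$, so multiplying \eqref{opt_thres_bailout} through by $-\Phi(q+r)Z^{(q,r)\prime}(b)<0$ shows that $\widehat{\mathfrak{C}}_b$ is equivalent to $g(b)=0$, where
\begin{align*}
g(b):=\Big(\frac{rZ^{(q)}(b)}{r+q}-\beta\Big)+\frac{1}{\Phi(q+r)}Z^{(q,r)\prime}(b)=\frac{r}{r+q}Z^{(q)}(b)-\beta+\frac{q}{r+q}J^{(q,r)}(b).
\end{align*}

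Next I would differentiate. Since $Z^{(q)\prime}(b)=qW^{(q)}(b)\ge 0$ and, from \eqref{z1}, $J^{(q,r)\prime}(b)=\Phi(q+r)J^{(q,r)}(b)-rW^{(q)}(b)$, a short computation gives
\begin{align*}
g'(b)=\frac{rq}{r+q}W^{(q)}(b)+\frac{q}{r+q}\big(\Phi(q+r)J^{(q,r)}(b)-rW^{(q)}(b)\big)=\frac{q}{r+q}\Phi(q+r)J^{(q,r)}(b)>0,
\end{align*}
using $J^{(q,r)}>0$ from \eqref{z1}. Hence $g$ is strictly increasing on $(0,\infty)$, so it has at most one root.

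For existence I would check the two endpoints. At $b=0$: $Z^{(q)}(0)=1$ and $J^{(q,r)}(0)=1$, so $g(0)=\frac{r}{r+q}+\frac{q}{r+q}-\beta=1-\beta<0$ by the standing assumption $\beta>1$. As $b\to\infty$: $Z^{(q)}(b)\to\infty$ (since $\overline{W}^{(q)}(b)\to\infty$), and $J^{(q,r)}(b)>0$, so $g(b)\to\infty$. Since $g$ is continuous (each of $Z^{(q)},J^{(q,r)},W^{(q)}$ is continuous on $(0,\infty)$, and the one-sided behaviour at $0$ is covered by the explicit values above), the intermediate value theorem yields a root $\tilde b>0$, which is unique by strict monotonicity.

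The only mild subtlety — and the one place to be careful rather than a genuine obstacle — is the behaviour at $b=0$ in the bounded variation case, where $W^{(q)}(0)=1/c\neq 0$: one should confirm that $g$ extends continuously to $0$ with value $1-\beta$ (it does, since $Z^{(q)}$ and $J^{(q,r)}$ are continuous at $0$ regardless of path type, the possible irregularity being only in their derivatives), so that $\widehat{\mathfrak{C}}_b$ cannot be satisfied at any boundary point and the root is genuinely in the open half-line. Everything else is a routine transcription of the argument already used for Lemma \ref{lemma_opt_thres}.
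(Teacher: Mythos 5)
Your proof is correct and is essentially the paper's argument: your auxiliary function $g(b)=\frac{r}{r+q}Z^{(q)}(b)-\beta+\frac{q}{r+q}J^{(q,r)}(b)$ is, by the definition \eqref{z2} of $Z^{(q,r)}$, identical to the paper's $\hat{f}(b)=Z^{(q,r)}(b)-\beta$, and the remaining steps (positivity of $g'$, $g(0)=1-\beta<0$ from $\beta>1$, divergence at infinity) match the paper's proof line by line.
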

\begin{proof} 
First notice, by \eqref{Z_q_r_der}, that \eqref{opt_thres_bailout} is equivalent to $\hat{f}(b) = 0$ where
	\begin{align}
	\hat{f}(b):=
	Z^{(q,r)}(b) -\beta, \quad b \geq 0. \label{def_f_hat}
	\end{align}
	Differentiating this and again by \eqref{Z_q_r_der}, 
	\begin{align*}
		\hat{f}'(b)
		=\frac{q}{r+q}\Phi(q+r) J^{(q,r)}(b)>0.
	\end{align*}
	Therefore the function $\hat{f}$ is strictly increasing, and we note that, by \eqref{W_q_limit}, $\lim_{b\to\infty}\hat{f}(b)=\infty$.
	On the other hand, $\beta>1$ implies that $\hat{f}(0)=1-\beta<0$.
Hence, there exists a unique $\tilde{b}>0$ such that $\hat{f} (\tilde{b})=0$, as desired.
	\end{proof}
\subsection{Verification.}
Let $b^{\dagger} > 0$ be the unique root of \eqref{opt_thres_bailout} as in Lemma \ref{lemma_existence_b_dagger}. By substituting \eqref{opt_thres_bailout} in \eqref{vf_db_a}, we can write
\begin{align}\label{vf_db_c}
	u_{b^\dagger}(x)=-H^{(q,r)}(b^\dagger-x)-\frac{Z^{(q,r)}(b^\dagger-x)}{\Phi(q+r)}\qquad\text{for $x \geq0$}.
\end{align}	

\begin{remark} \label{remark_same_form}
The function \eqref{vf_db_c} has the same form as the value function for the first problem \eqref{vf5}, except that the value of the barrier is different.
\end{remark}


The main result of this section is as follows.
\par 
\begin{theorem} \label{proof_main2}
The periodic barrier strategy 
$\bar{\pi}^{b^{\dagger}}$ 
with classical reflection from below at $0$
is optimal and the value function is $u(x)=u_{{b^\dagger}}(x)$ for all $0\leq x<\infty$.
\end{theorem}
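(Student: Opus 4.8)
The plan is to follow the proof of Theorem~\ref{ver_theo} almost verbatim, capitalizing on Remark~\ref{remark_same_form}: since \eqref{vf_db_c} coincides in functional form with \eqref{vf5} (only the barrier changes from $b^*$ to $b^\dagger$), every derivative and generator computation of Section~\ref{section_first_prob} transfers, and the sole genuinely new feature is the classical reflection at $0$.

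First I would establish a verification lemma for the second problem, the natural counterpart of Lemma~\ref{verificationlemma}: if $\hat\pi \in \bar{\mathcal{A}}$ is such that $u_{\hat\pi}$ is sufficiently smooth on $(0,\infty)$, satisfies $u_{\hat\pi}'(x) \leq \beta$ for all $x > 0$, and
\[
(\mathcal{L}-q)u_{\hat\pi}(x) + r \max_{0 \leq l \leq x}\{ l + u_{\hat\pi}(x-l) - u_{\hat\pi}(x)\} \leq 0, \qquad x > 0,
\]
then $u_{\hat\pi} = u$ and $\hat\pi$ is optimal. Its proof mirrors the appendix argument for Lemma~\ref{verificationlemma}: apply the change-of-variables (It\^o) formula to $e^{-qt} u_{\hat\pi}(U^{\bar\pi}(t))$ for an arbitrary $\bar\pi \in \bar{\mathcal{A}}$; the continuous part together with the compensator of the Poissonian dividend jumps produces the left-hand side of the displayed inequality, which is $\leq 0$; the singular capital-injection term contributes $\int e^{-qs}[\beta - u_{\hat\pi}'(U^{\bar\pi}(s-))]\diff R^{\bar\pi}(s) \geq 0$ thanks to $u_{\hat\pi}' \leq \beta$ and the fact that $R^{\bar\pi}$ grows only where $U^{\bar\pi}=0$; localizing with the (at most linear) growth of $u_{\hat\pi}$ and \eqref{admissibility2}, taking expectations and letting $t \to \infty$ gives $u_{\hat\pi} \geq u_{\bar\pi}$, with equality for $\hat\pi$ itself.

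Then I would check that $u_{b^\dagger}$ satisfies these three conditions. Smoothness is Lemma~\ref{lemma_smoothness_capital_injection}. Because \eqref{vf_db_c} has the form of \eqref{vf5}, the computation \eqref{v_prime_b_star} gives $u_{b^\dagger}'(x) = Z^{(q,r)}(b^\dagger - x) > 0$, and by \eqref{Z_q_r_der} the function $u_{b^\dagger}$ is strictly increasing and concave on $(0,\infty)$ with $u_{b^\dagger}'(b^\dagger) = Z^{(q,r)}(0) = 1$; since $Z^{(q,r)}$ is increasing, $\sup_{x>0} u_{b^\dagger}'(x) = u_{b^\dagger}'(0+) = Z^{(q,r)}(b^\dagger) = \beta$, the last equality being precisely $\hat f(b^\dagger) = 0$ from \eqref{def_f_hat}; hence $u_{b^\dagger}' \leq \beta$. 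Concavity together with $u_{b^\dagger}'(b^\dagger) = 1$ gives Lemma~\ref{cond_max_v} with $b^\dagger$ in place of $b^*$, and Lemma~\ref{generator_on_v} — whose proof uses only the functional form of the candidate — yields $(\mathcal{L}-q)u_{b^\dagger} = 0$ on $(0, b^\dagger]$ and the explicit expression on $(b^\dagger, \infty)$. Repeating the two computations in the proof of Theorem~\ref{ver_theo} (trivial on $(0,b^\dagger]$; on $(b^\dagger,\infty)$ by substituting \eqref{vf_db_c} into \eqref{max_cond}) shows that the displayed HJB inequality in fact holds with equality throughout $(0,\infty)$. The verification lemma then gives $u = u_{b^\dagger}$ and the optimality of $\bar\pi^{b^\dagger}$.

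The main obstacle is the verification lemma itself rather than the checking step: one must set up the stochastic calculus for the Parisian-controlled, classically reflected surplus with care ($L^{\bar\pi}$ jumps at Poisson times while $R^{\bar\pi}$ is singular and acts at the boundary), ensure that the local-martingale terms are true martingales after localization using the linear growth of $u_{b^\dagger}$ and the integrability \eqref{admissibility2}, and recognize that $u_{b^\dagger}'(0+) = \beta$ is exactly the Neumann-type matching at $0$ that makes the reflection term contribute with the correct sign. Everything downstream of the verification lemma is a transcription of Section~\ref{section_first_prob}.
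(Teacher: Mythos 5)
Your proposal is correct and follows essentially the same route as the paper: a verification lemma proved via It\^o's formula with the capital-injection term controlled by $u'\leq\beta$, followed by a transcription of the Section \ref{section_first_prob} computations using the identity of functional forms, with $u_{b^\dagger}'(0+)=Z^{(q,r)}(b^\dagger)=\beta$ coming from $\hat f(b^\dagger)=0$. The only (minor) point you gloss over is the transversality condition the paper states explicitly as \eqref{HJB-inequality_db3}, namely that $u_{\hat\pi}$ be bounded from below so that the term $\mathrm{e}^{-q(t\wedge T_n)}w(U^{\bar\pi}(t\wedge T_n))$ can be discarded in the limit; for $u_{b^\dagger}$ this follows from monotonicity and finiteness of $u_{b^\dagger}(0)$.
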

As in Section \ref{ver_div} (for the  proof of Theorem \ref{ver_theo}), we shall provide the verification lemma and then show that $u_{{b^\dagger}}$ satisfies the stated conditions.
To this end, we extend the domain of the function $u_{\bar{\pi}}$, for all $\bar{\pi} \in \bar{\mathcal{A}}$, as in \eqref{vf}, to all $\mathbb{R}$ by setting $u_{\bar{\pi}}(x):=u_{\bar{\pi}}(0)+\beta x$ for $x < 0$.
\par A sufficient condition for optimality is given as follows.
\begin{lemma}[Verification lemma]
	\label{verificationlemma_2}
	Suppose $\hat{\pi}$ is an admissible dividend strategy such that $u_{\hat{\pi}}$ is sufficiently smooth on $(0,\infty)$ and differentiable at zero (i.e.\ $u_{\hat{\pi}}'(0) = \beta$), and satisfies
	\begin{align}
		\label{HJB-inequality_db}
		(\mathcal{L} - q)u_{\hat{\pi}}(x)+r\max_{0\leq l\leq x}\{l+u_{\hat{\pi}}(x-l)-u_{\hat{\pi}}(x)\}\leq 0,  &\quad x > 0,   \\
		u_{\hat{\pi}}'(x)\leq \beta,  &\quad x > 0,  \label{HJB-inequality_db2} \\
		\inf_{x\geq0}u_{\hat{\pi}}(x)>-m, &\quad \text{for some $m > 0$}. \label{HJB-inequality_db3}
	\end{align} 
	Then $u_{\hat{\pi}}(x)=u(x)$ for all $x\geq0$ and hence $\hat{\pi}$ is an optimal strategy.
\end{lemma}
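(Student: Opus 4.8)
The plan is to show that for any competing strategy $\bar\pi \in \bar{\mathcal{A}}$ the quantity $u_{\hat\pi}(x)$ dominates $u_{\bar\pi}(x)$, by applying an Itô/change-of-variables formula to the process $t \mapsto {\rm e}^{-qt} u_{\hat\pi}(U^{\bar\pi}(t))$ and exploiting the stated inequalities. First I would fix $x \geq 0$ and an admissible $\bar\pi = (L^{\bar\pi}, R^{\bar\pi})$ with associated controlled process $U^{\bar\pi}$, which stays in $[0,\infty)$; I extend $u_{\hat\pi}$ to all of $\R$ as in the paragraph preceding the lemma, so $u_{\hat\pi}$ is (at least) $C^1$ across zero with $u_{\hat\pi}'(0)=\beta$, and sufficiently smooth on $(0,\infty)$. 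Since $X$ is a spectrally positive \lev process, $\hat X := -X$ is spectrally negative, and one applies the appropriate version of the change-of-variables / Meyer–Itô formula valid for a sufficiently smooth function (see the proof of the analogous Lemma \ref{verificationlemma} in Appendix \ref{Appendix_A}, which is the template to imitate): decompose $U^{\bar\pi}(t) = X(t) - L^{\bar\pi}(t) + R^{\bar\pi}(t)$, separate the continuous part of $L^{\bar\pi}$ (here $L^{\bar\pi}$ is in fact a pure-jump process supported on $\mathcal{T}_r$, which simplifies matters) and of $R^{\bar\pi}$, and collect the jump terms. This yields, for a suitable localizing sequence of stopping times $(T_n)$,
\begin{align*}
{\rm e}^{-q(t\wedge T_n)} u_{\hat\pi}(U^{\bar\pi}(t\wedge T_n)) = u_{\hat\pi}(x) + \int_0^{t\wedge T_n}{\rm e}^{-qs}(\mathcal{L}-q)u_{\hat\pi}(U^{\bar\pi}(s-))\,\diff s + M_t^{(n)} + (\text{injection terms}) + (\text{dividend jump terms}),
\end{align*}
where $M^{(n)}$ is a martingale (stochastic integral against the compensated jump measure of $X$ plus the Brownian part) and the drift-type terms are to be controlled by the hypotheses.

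The key steps, in order. \textbf{(1)} Use \eqref{HJB-inequality_db} to bound the Lebesgue integral: by concavity-type reasoning the max over $0\le l\le x$ in \eqref{HJB-inequality_db} encodes both $(\mathcal{L}-q)u_{\hat\pi}(x)\le 0$ and the fact that jumps of $L^{\bar\pi}$ (which push the surplus downward by $\nu^{\bar\pi}(T(i))$) contribute, after discounting, at most $\nu^{\bar\pi}(T(i)) = \Delta L^{\bar\pi}(T(i))$ minus the corresponding drop in $u_{\hat\pi}$; more precisely, $u_{\hat\pi}(U^{\bar\pi}(s-)) - u_{\hat\pi}(U^{\bar\pi}(s-) - l) \ge l - \tfrac{1}{r}[(\mathcal L - q)u_{\hat\pi}](U^{\bar\pi}(s-))\ \ge l$ is the wrong direction — rather one uses that the Poisson jumps of $L^{\bar\pi}$, together with the $r$-weighted max term, are exactly what \eqref{HJB-inequality_db} is designed to absorb, so that after taking expectations the dividend contribution is bounded above by $\E_x(\int_{[0,t\wedge T_n]}{\rm e}^{-qs}\diff L^{\bar\pi}(s))$. \textbf{(2)} Use \eqref{HJB-inequality_db2}, $u_{\hat\pi}'\le\beta$, together with $u_{\hat\pi}'(0)=\beta$ and the linear extension below zero, to bound the capital-injection terms: since $R^{\bar\pi}$ increases only when $U^{\bar\pi}$ is at (or below) $0$, each increment $\diff R^{\bar\pi}(s)$ raises $u_{\hat\pi}$ by at most $\beta\,\diff R^{\bar\pi}(s)$, so the injection contribution to ${\rm e}^{-q\cdot}u_{\hat\pi}(U^{\bar\pi})$ is at most $\beta\int{\rm e}^{-qs}\diff R^{\bar\pi}(s)$ in absolute value but enters with the correct sign to give $-\,$(cost) an \emph{upper} bound, i.e.\ the term is $\le \beta\int_{[0,t\wedge T_n]}{\rm e}^{-qs}\diff R^{\bar\pi}(s)$. \textbf{(3)} Take expectations, killing the martingale $M^{(n)}$; rearrange to get
\begin{align*}
u_{\hat\pi}(x) \ge \E_x\Big( {\rm e}^{-q(t\wedge T_n)}u_{\hat\pi}(U^{\bar\pi}(t\wedge T_n)) + \int_{[0,t\wedge T_n]}{\rm e}^{-qs}\diff L^{\bar\pi}(s) - \beta\int_{[0,t\wedge T_n]}{\rm e}^{-qs}\diff R^{\bar\pi}(s)\Big).
\end{align*}
\textbf{(4)} Pass to the limit $n\to\infty$ and $t\to\infty$. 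Use \eqref{HJB-inequality_db3}, $\inf_{x\ge 0}u_{\hat\pi}(x) > -m$, together with the growth control $u_{\hat\pi}(x)\le u_{\hat\pi}(0)+\beta x$ on $[0,\infty)$ (from \eqref{HJB-inequality_db2}) and the admissibility condition \eqref{admissibility2} $\E_x(\int_{[0,\infty)}{\rm e}^{-qt}\diff R^{\bar\pi}(t))<\infty$, to justify dominated/monotone convergence: the boundary term ${\rm e}^{-q(t\wedge T_n)}u_{\hat\pi}(U^{\bar\pi}(t\wedge T_n))$ has a nonnegative liminf (by \eqref{HJB-inequality_db3} it is bounded below by $-m{\rm e}^{-q(t\wedge T_n)}\to 0$), the two integrals converge monotonically to $\int_{[0,\infty)}$, and we get $u_{\hat\pi}(x) \ge u_{\bar\pi}(x)$. \textbf{(5)} Finally, since $\hat\pi\in\bar{\mathcal A}$ itself, $u_{\hat\pi}(x)\le u(x)$ trivially, and combined with $u_{\hat\pi}(x)\ge u_{\bar\pi}(x)$ for all $\bar\pi$ we conclude $u_{\hat\pi}(x) = u(x)$ and $\hat\pi$ is optimal.

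The main obstacle I expect is \textbf{step (4)}, the limiting argument: one must carefully verify that the boundary term ${\rm e}^{-q(t\wedge T_n)}u_{\hat\pi}(U^{\bar\pi}(t\wedge T_n))$ does not contribute a harmful positive quantity in the limit. This requires combining the lower bound \eqref{HJB-inequality_db3} (to handle $\liminf$) with an upper bound on $\E_x({\rm e}^{-qt}u_{\hat\pi}(U^{\bar\pi}(t)))$ as $t\to\infty$, for which one writes $u_{\hat\pi}(U^{\bar\pi}(t)) \le u_{\hat\pi}(0) + \beta\, U^{\bar\pi}(t) \le u_{\hat\pi}(0) + \beta(X(t) + R^{\bar\pi}(t))$ and uses $\E_x({\rm e}^{-qt}|X(t)|)\to 0$ (valid because $\E[X(1)]$ is finite, i.e.\ $\psi'(0+) > -\infty$) together with \eqref{admissibility2}; the localization $T_n$ must be chosen (e.g.\ $T_n = \inf\{t: U^{\bar\pi}(t) > n \text{ or } |X(t)| > n\}$) so that the stochastic integrals are genuine martingales and $T_n\uparrow\infty$ a.s. A secondary subtlety is the precise bookkeeping of the jump terms of $L^{\bar\pi}$ in step (1): because $L^{\bar\pi}$ jumps only at the Poisson times $\mathcal T_r$ and the generator of the killed-plus-Poisson-observed dynamics is exactly $(\mathcal L - q) + r(\text{shift-and-take-max})$, one should first rewrite the problem on the filtration including $N^r$ and recognize that the $r$-term in \eqref{HJB-inequality_db} is the compensator of the dividend jumps; this is the step where the extension of $u_{\hat\pi}$ below zero and the differentiability $u_{\hat\pi}'(0)=\beta$ (rather than merely $\le\beta$) are genuinely used, since capital injection can in principle push the reflected process' pre-jump value to exactly $0$.
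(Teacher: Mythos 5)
Your proposal follows essentially the same route as the paper's proof in Appendix \ref{Appendix_B}: Meyer--It\^o applied to ${\rm e}^{-q(t\wedge T_n)}u_{\hat\pi}(U^{\bar\pi}(t\wedge T_n))$, the $r$-weighted max term in \eqref{HJB-inequality_db} absorbing the compensated Poissonian dividend jumps, \eqref{HJB-inequality_db2} bounding the capital-injection increments by $\beta\,\diff R^{\bar\pi}$, and \eqref{HJB-inequality_db3} plus \eqref{admissibility2} handling the passage to the limit (your extra worry about an \emph{upper} bound on the boundary term is unnecessary, since that term enters the lower bound for $u_{\hat\pi}(x)$ with a plus sign and only needs to be bounded below). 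The one step you gloss over is the reduction, justified by $\beta>1$ and $U^{\bar\pi}\geq 0$, to competing strategies satisfying $\nu^{\bar\pi}(T(i))\leq U^{\bar\pi}(T(i)-)$: without this the dividend jump at $s$ need not be dominated by $\max_{0\leq l\leq U^{\bar\pi}(s-)}\{l+u_{\hat\pi}(U^{\bar\pi}(s-)-l)-u_{\hat\pi}(U^{\bar\pi}(s-))\}$, so this observation should be stated explicitly before invoking \eqref{HJB-inequality_db}.
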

\begin{proof}
See Appendix \ref{Appendix_B}.
\end{proof}

With the help of our arguments for the verification in the previous section, we shall show that $u_{b^\dagger}$ satisfies the inequalities \eqref{HJB-inequality_db}, \eqref{HJB-inequality_db2}, and \eqref{HJB-inequality_db3}. Recall that $u_{b^\dagger}$ is sufficiently smooth  on $(0, \infty)$ by Lemma \ref{lemma_smoothness_capital_injection}.
\begin{proof}[Proof of Theorem \ref{proof_main2}]
By the observation given in Remark \ref{remark_same_form}, the following analogues of Lemmas \ref{cond_b^*} (i), \ref{cond_max_v}, and \ref{generator_on_v} hold in the same way. 
\begin{lemma}\label{verificationlemma_B}
The following conditions hold for $u_{{b^\dagger}}$: 
	\begin{itemize}
		\item[(i)]The value function $u_{{b^\dagger}}$ is strictly increasing, concave, and $u_{{b^\dagger}}'({b^\dagger})=1$.
	\item[(ii)] The inequality \eqref{HJB-inequality_db} for $x > 0$ holds with $u_{\hat{\pi}}$ replaced with $u_{b^\dagger}$.
	\end{itemize}
	\end{lemma}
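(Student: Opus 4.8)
The plan is to prove Lemma \ref{verificationlemma_B} by exploiting Remark \ref{remark_same_form}: since \eqref{vf_db_c} has literally the same functional form as \eqref{vf5}, with $b^\dagger$ in place of $b^*$, every computation from Section \ref{section_first_prob} that used only the form of $v_{b^*}$ (and not the defining property $\mathfrak{C}_{b^*}$) transfers verbatim. The one point that needs care is that $b^\dagger$ satisfies $\widehat{\mathfrak{C}}_b$, not $\mathfrak{C}_b$; but by \eqref{Z_q_r_der} the condition $\widehat{\mathfrak{C}}_{b^\dagger}$ says $[rZ^{(q)}(b^\dagger)/(r+q)-\beta]/Z^{(q,r)\prime}(b^\dagger) = -1/\Phi(q+r)$, and substituting into \eqref{vf_db_a} is exactly how \eqref{vf_db_c} was derived, so $u_{b^\dagger}$ truly is $-H^{(q,r)}(b^\dagger-\cdot) - Z^{(q,r)}(b^\dagger-\cdot)/\Phi(q+r)$. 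Thus one may simply repeat the arguments with $b^\dagger$ replacing $b^*$ everywhere.

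For part (i), I would differentiate \eqref{vf_db_c} exactly as in \eqref{vf_der1}–\eqref{vf_der2}. Because $u_{b^\dagger}$ has the same form as $v_{b^*}$ under condition $\mathfrak C$, the computation in the proof of Lemma \ref{cond_b^*}(i) gives, for $x>0$,
\begin{align*}
u_{b^\dagger}'(x) = \frac{r}{r+q} Z^{(q)}(b^\dagger-x) + \frac{q}{r+q} J^{(q,r)}(b^\dagger-x) = Z^{(q,r)}(b^\dagger-x) > 0,
\end{align*}
using \eqref{z2}, and then by \eqref{Z_q_r_der},
\begin{align*}
u_{b^\dagger}''(x) = -\frac{q}{r+q}\Phi(q+r) J^{(q,r)}(b^\dagger-x) < 0,
\end{align*}
since $J^{(q,r)} > 0$ by \eqref{z1}. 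Evaluating the first display at $x = b^\dagger$ and using $Z^{(q,r)}(0) = 1$ (from \eqref{z2}, since $Z^{(q)}(0) = 1$ and $J^{(q,r)}(0) = 1$) gives $u_{b^\dagger}'(b^\dagger) = 1$. This establishes (i).

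For part (ii), the strategy is to mirror the proof of Theorem \ref{ver_theo}(i) for the case $x > b^*$, now with $b^\dagger > 0$ playing the role of $b^*$. First, the analogue of Lemma \ref{cond_max_v} holds by the same one-line argument: concavity plus $u_{b^\dagger}'(b^\dagger) = 1$ from (i) gives $\max_{0\le l\le x}\{l + u_{b^\dagger}(x-l) - u_{b^\dagger}(x)\}$ equal to $0$ on $[0,b^\dagger]$ and to $x - b^\dagger + u_{b^\dagger}(b^\dagger) - u_{b^\dagger}(x)$ on $(b^\dagger,\infty)$. Next, the analogue of Lemma \ref{generator_on_v} holds verbatim: for $0 < x < b^\dagger$ one has $(\mathcal L - q)u_{b^\dagger}(x) = 0$ by the harmonicity identities \eqref{H_harmonic}, \eqref{Z_r_harmonic}, \eqref{Z_q_harmonic} applied to \eqref{vf_db_c}, and for $x > b^\dagger$ one rewrites \eqref{vf_db_c} as in \eqref{v_rewritten} and applies \eqref{generator_parts} to get $(\mathcal L - q)u_{b^\dagger}(x) = \frac{qr}{r+q}\big((b^\dagger - x) + (1 - {\rm e}^{\Phi(q+r)(b^\dagger-x)})/\Phi(q+r)\big)$. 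Finally, combining these with the max expression exactly as at the end of the proof of Theorem \ref{ver_theo} shows that \eqref{HJB-inequality_db} holds with equality for all $x > 0$. I do not expect a genuine obstacle here — the only thing to watch is to state explicitly (as Remark \ref{remark_same_form} does) that none of the harmonicity computations used $\mathfrak C_{b^*}$, only the closed form \eqref{vf5}, so they apply to $u_{b^\dagger}$ without change; the verification of \eqref{HJB-inequality_db2} and \eqref{HJB-inequality_db3} will then be handled separately (monotonicity of $u_{b^\dagger}'$ via concavity and $u_{b^\dagger}'(b^\dagger)=1$ for the former, and the explicit form of $u_{b^\dagger}$ for the latter).
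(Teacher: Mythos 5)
Your proposal is correct and follows the same route as the paper: the paper also justifies Lemma \ref{verificationlemma_B} by appealing to Remark \ref{remark_same_form} and noting that the analogues of Lemmas \ref{cond_b^*}(i), \ref{cond_max_v}, and \ref{generator_on_v} carry over verbatim once $u_{b^\dagger}$ is put in the closed form \eqref{vf_db_c}. You have merely spelled out the transferred computations in more detail, which is fine.
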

	
Hence, we are only left to show \eqref{HJB-inequality_db2} (with the differentiability at $0$) and \eqref{HJB-inequality_db3}. For the former, by Lemma \ref{verificationlemma_B} (i), it is sufficient to show $u_{{b^\dagger}}'(0) = \beta$. This indeed holds because, by \eqref{opt_thres_bailout} and \eqref{vf_db_c}, we obtain
\begin{align*}
u_{{b^\dagger}}'(0)=\frac{r Z^{(q)} (b^\dagger)}{ r+q}+\frac{Z^{(q,r)\prime}(b^\dagger)}{\Phi(q+r)}=\frac{r Z^{(q)} (b^\dagger)}{ r+q}-\left(\frac{r Z^{(q)} (b^\dagger)}{ r+q}-\beta\right)=\beta.
\end{align*}
For the latter, it holds by Lemma \ref{verificationlemma_B} (i) and because $u_{b^\dagger}(0)$ is finite. 
\end{proof}

\begin{remark}[Connection with the classical case] \label{remark_convergence_r_capital_injection} Similarly to Remark \ref{remark_convergence_r}, as $r \rightarrow \infty$, the optimal barrier $b^\dagger$ as well as the value function $u_{b^\dagger}$ are expected to converge to those in the classical case: $\tilde{b}^\dagger :=  (Z^{(q)})^{-1}(\beta)$ and 
\begin{equation} 
\tilde{u}(x) :=  - \overline{Z}^{(q)}(\tilde{b}^\dagger-x) - \frac {\psi'(0+)} q, \quad x \geq 0;\label{v_bar_a_bailout}
\end{equation}
see Bayraktar et al.\ \cite{BKY}. 

Indeed, it is easy to see that the function $\hat{f}(b)$ as in \eqref{def_f_hat} converges, as $r \rightarrow \infty$, to $Z^{(q)}(b) - \beta$, whose root becomes $\tilde{b}^\dagger$. The form of the value function also converges to that of \eqref{v_bar_a_bailout} as in \eqref{value_function_conv}.  This is numerically verified in Section \ref{section_numerics}.
\end{remark}

\section{Numerical Examples} \label{section_numerics}

%

In this section, we confirm the analytical results obtained for the two problems through a sequence of numerical experiments.  Throughout this section, we assume that the underlying process $X$ is the spectrally positive version of the \emph{phase-type} \lev process (with a Brownian motion) of \cite{Asmussen_2004}, which admits an analytical form of scale function as in \cite{Egami_Yamazaki_2010_2}.  This process is particularly important because it can approximate any  spectrally positive \lev process (see \cite{Asmussen_2004} and \cite{Egami_Yamazaki_2010_2}).  See, e.g., \cite{Asmussen_2004, LYZ} for stochastic control problems using this process.

More specifically, for some $c \in \R$ and $\sigma > 0$, 
\begin{equation}
 X(t) - X(0)= - c t+\sigma B(t) + \sum_{n=1}^{N(t)} Z_n, \quad 0\le t <\infty, \label{X_phase_type}
\end{equation}
where $B=( B(t); t\ge 0)$ is a standard Brownian motion, $N=(N(t); t\ge 0 )$ is a Poisson process with arrival rate $\kappa$, and  $Z = ( Z_n; n = 1,2,\ldots )$ is an i.i.d.\ sequence of phase-type-distributed random variables with representation $(m,{\bm \alpha},{\bm T})$, or equivalently the first absorption time in a continuous-time Markov chain consisting of a single absorbing state and $m$ transient states with its initial distribution $\alpha$ and transition matrix ${\bm T}$ (see \cite{Asmussen_2004} for details). The processes $B$, $N$, and $Z$ are assumed mutually independent.  We refer the reader to \cite{Egami_Yamazaki_2010_2, KKR} for the forms of the corresponding scale functions.

\subsection{Numerical results for the first problem}  We first consider the first problem defined in Section \ref{dividends-strategy} and confirm the results obtained in Section \ref{section_first_prob}.  Here, for $X$ in \eqref{X_phase_type}, we set $\sigma = 0.2$ and $\kappa = 2$ and, for $Z$, we use the phase-type distribution with  $m=6$ 
that gives an approximation to the (folded) normal random variable with mean $0$ and variance $1$, which is given in \cite{leung2015analytic} (see \cite{leung2015analytic} for the values of $\alpha$ and ${\bm T}$).  For the drift parameter $c$, we consider \textbf{Case 1} with $c=0.5$  and \textbf{Case 2} with $c=2.0$ to obtain the cases $b^* > 0$ and $b^* = 0$, respectively.  For the other parameters, let $q = 0.05$, $r = 0.1$ and $\rho = 0$ unless stated otherwise.

The first step in the implementation is to compute the optimal barrier $b^*$.  In the left column of Figure \ref{problem1_optimality}, we plot the function $f$ as in \eqref{def_f} for \textbf{Cases 1} and \textbf{2}.
In both cases, it can be confirmed that $f$ is monotonically increasing.  In \textbf{Case 1}, it starts at a negative value and hence its unique root becomes $b^* > 0$; in \textbf{Case 2}, $f$ is uniformly positive and hence $b^* =0$.  In the right column, we plot the corresponding value functions $v_{b^*}$ (solid) along with suboptimal NPVs $v_{b}$ given in \eqref{vf3} (dotted) with $b \neq b^*$.  It can be confirmed in both cases that $v_{b^*}$ dominates $v_{b}$,  for $b \neq b^*$,
 uniformly in $x$.

\begin{figure}[htbp]
\begin{center}
\begin{minipage}{1.0\textwidth}
\centering
\begin{tabular}{cc}
 \includegraphics[scale=0.35]{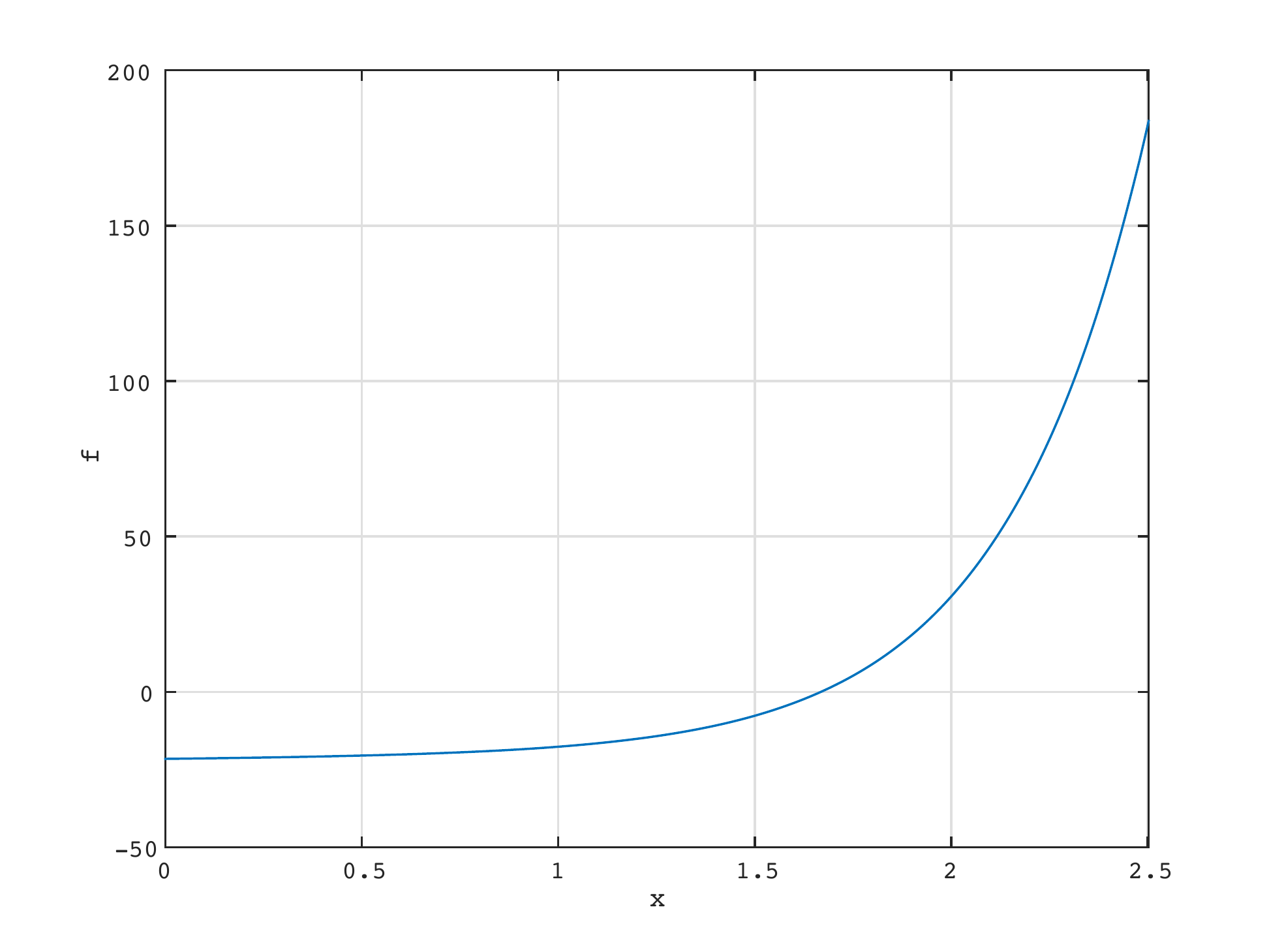} & \includegraphics[scale=0.35]{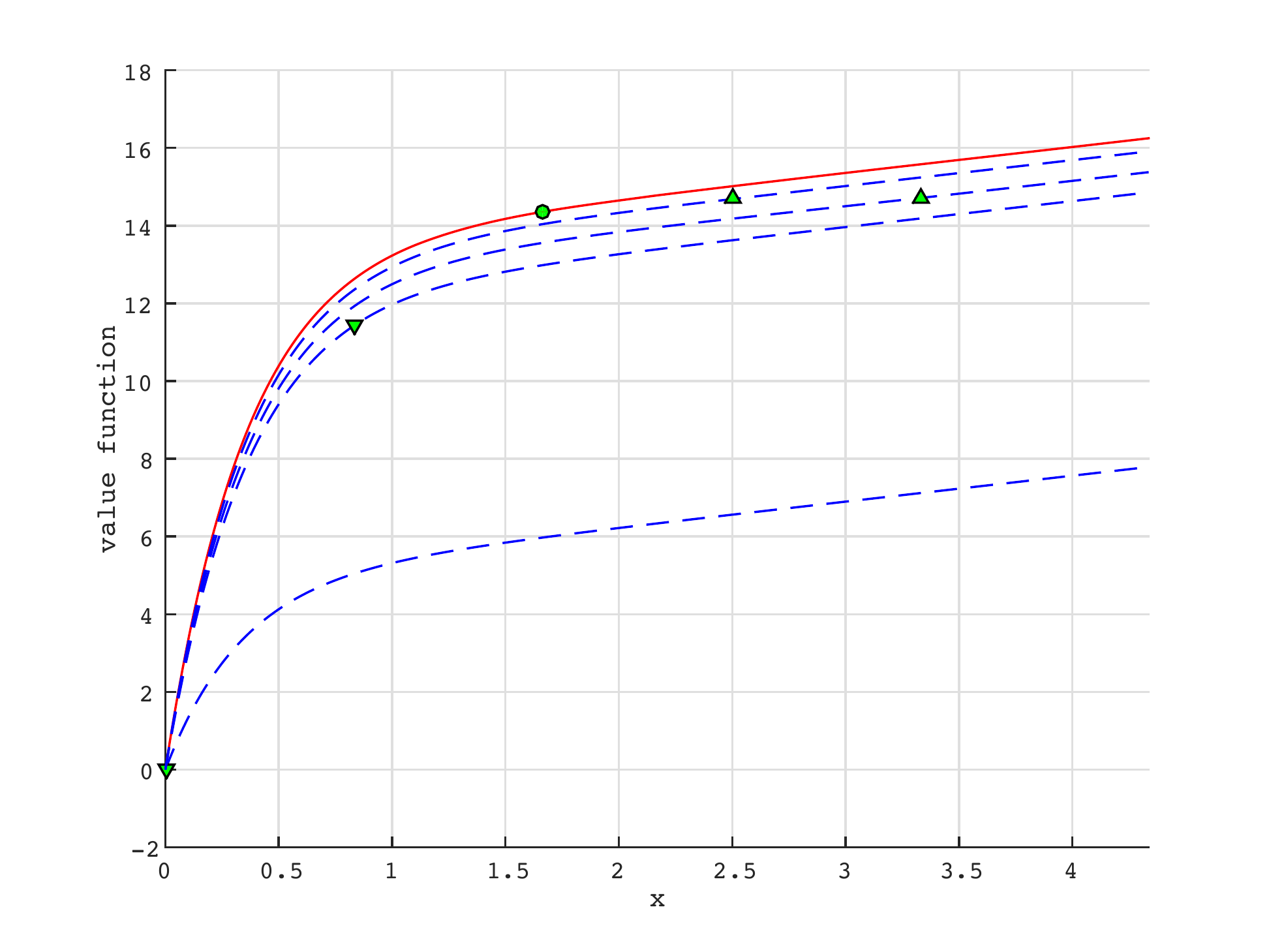}  \\
  \includegraphics[scale=0.35]{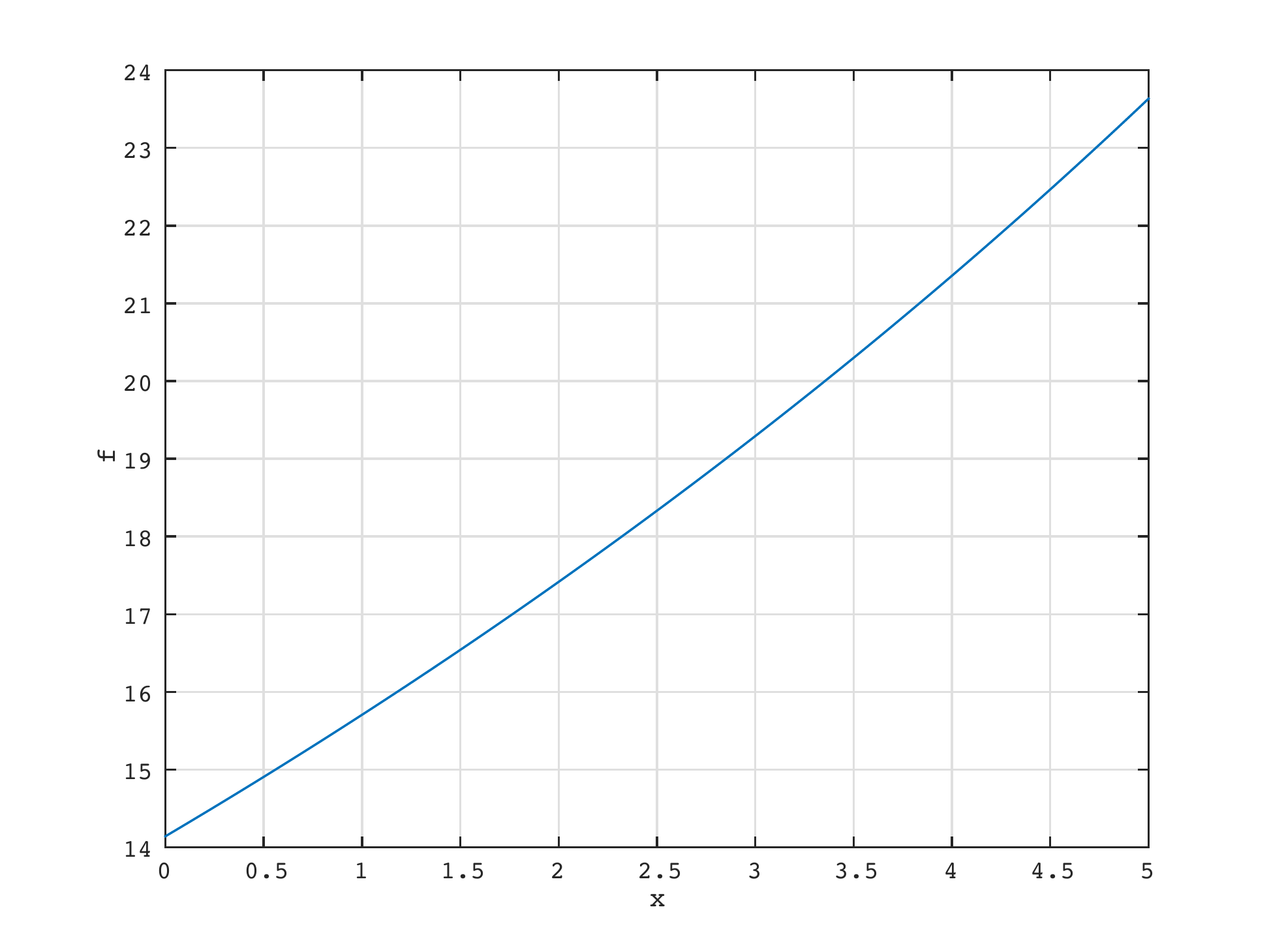} & \includegraphics[scale=0.35]{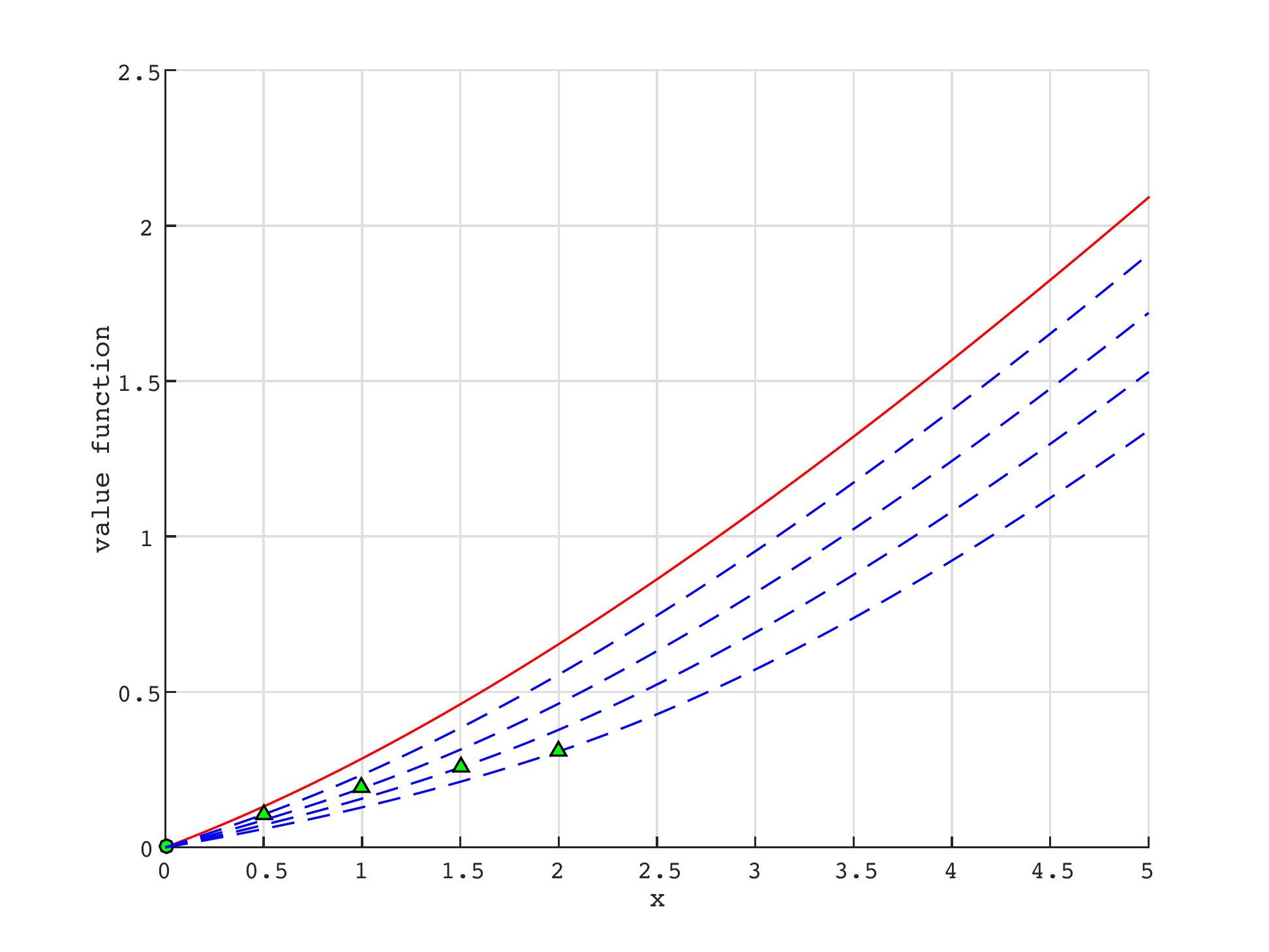}  \\
 $b \mapsto f(b)$ & $x \mapsto v_{b^*}(x)$ and $x \mapsto v_b(x)$ \end{tabular}
\end{minipage}
\caption{(Left) Plots of $f$ as in \eqref{def_f}  for \textbf{Case 1} (top) and \textbf{Case 2} (bottom). Its root, if it exists, becomes $b^* > 0$; otherwise $b^* = 0$. (Right) The corresponding value function $v_{b^*}$ (solid) along with suboptimal expected NPVs $v_b$ (dotted) for $b = 0, b^*/2, 3 b^*/2, 2 b^*$ for \textbf{Case 1} and $b = 0.5, 1, 1.5, 2$ for \textbf{Case 2}.  The values at $b^*$ are indicated by circles whereas those at the suboptimal barriers $b > b^*$ (resp.\ $b < b^*$)  are indicated by up-pointing (resp.\ down-pointing) triangles.
} \label{problem1_optimality}
\end{center}
\end{figure}

We next study the behavior of the value function $v_{b^*}$ with respect to $r$,
 as we have discussed in Remark \ref{remark_convergence_r}. 
Here we use the same parameters as \textbf{Case 1} above except for $r$. In Figure \ref{fig_rho} (i), we plot $v_{b^*}$ for an increasing sequence of $r$ along with the value function $\tilde{v}$ given in \eqref{eq:va-star} in the classical case. It is confirmed that $v_{b^*}$ increases uniformly in $x$ to $\tilde{v}$.  The convergence of the optimal barrier $b^*$ to $\tilde{b}^*$ is also confirmed.


Finally, we study the behavior of the value function $v_{b^*}$ with respect to the terminal payoff at ruin $\rho$.
Here we use the same parameters as \textbf{Case 1} above except for $\rho$. In Figure \ref{fig_rho} (ii), we plot $v_{b^*}$ for an increasing sequence of $\rho$ ranging from $-20$ to $20$. It is confirmed that, as $\rho$ increases, $v_{b^*}$ increases  uniformly in $x$ while $b^*$ decreases.  For sufficiently large $\rho$, $b^*$ becomes $0$. Interestingly, for high enough $\rho$, the value function fails to be monotonically increasing; this is due to the fact that, while one wants to liquidate as quickly as possible to enjoy the terminal payoff at ruin, one must wait until the next dividend payment opportunity.
\begin{figure}[htbp]
\begin{center}
\begin{minipage}{1.0\textwidth}
\centering
\begin{tabular}{cc}
\includegraphics[scale=0.35]{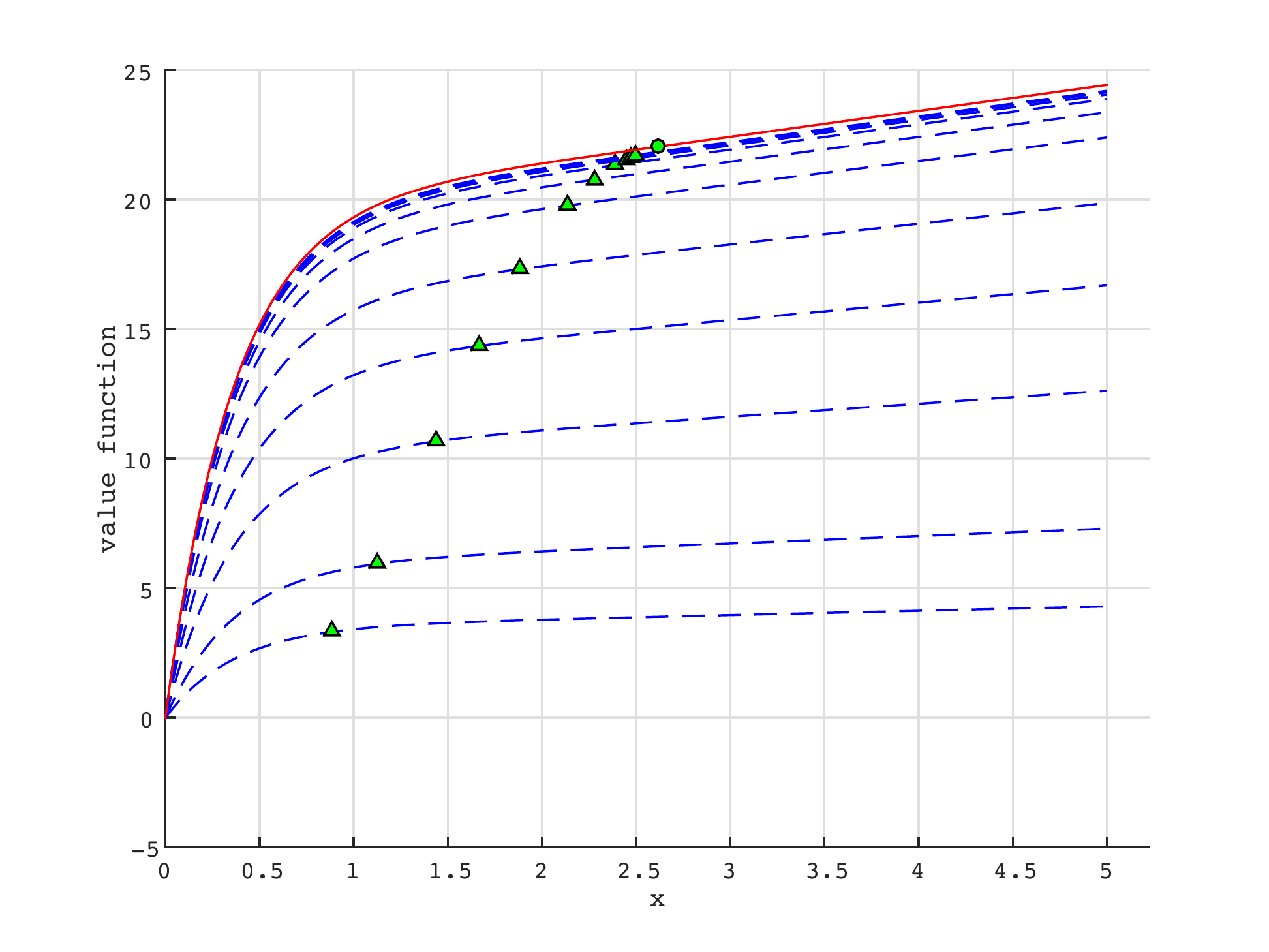} & \includegraphics[scale=0.35]{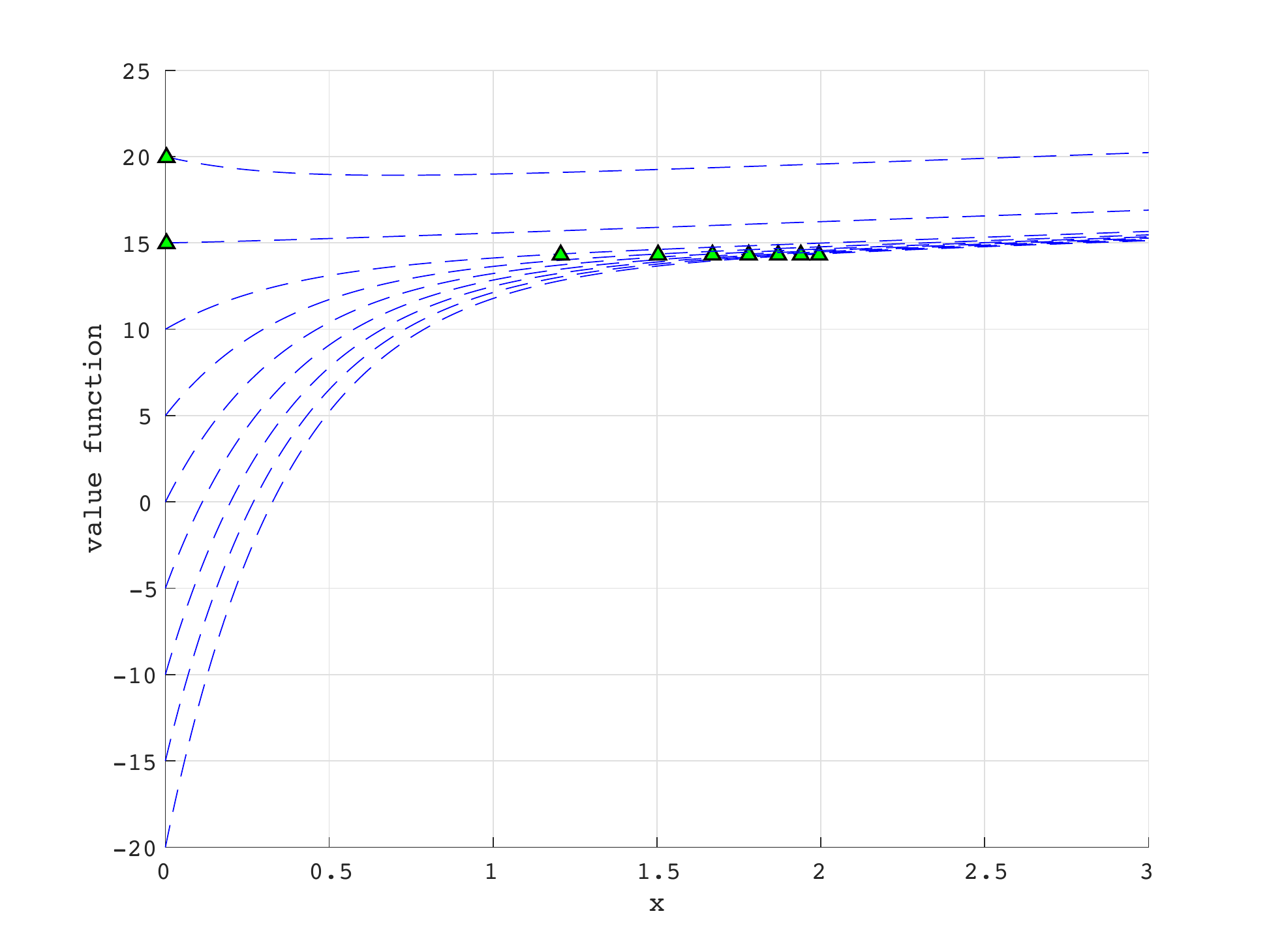}  \\
(i) with respect to $r$ & (ii) with respect to $\rho$  \end{tabular}
\end{minipage}
\caption{(i) The value functions $v_{b^*}$ (dotted) for $r = 0.01$, $0.02$, $0.05$,  $0.1$, $0.2$, $0.5$, $1$, $2$, $3$, $4$, $5$ along with the value function $\tilde{v}$ (given in \eqref{eq:va-star}) in the classical case (solid).  The up-pointing triangles show the points at $b^*$ of $v_{b^*}$; the circle shows the point at $\tilde{b}^*$ of $\tilde{v}$.
(ii) The value functions $v_{b^*}$ for $\rho = -20$, $-15$, \ldots, $15$, $20$.  The up-pointing triangles show the points at $b^*$ of $v_{b^*}$.
} \label{fig_rho}
\end{center}
\end{figure}

\subsection{Numerical results for the second problem}  We now move on to the case with capital injection and confirm the analytical results obtained in Section \ref{section_second_prob}.  Here, we set $\beta = 2$ and use the same parameters as \textbf{Case 1} above, unless stated otherwise.

In Figure \ref{figure_value_function_capital},  we plot the function $\hat{f}$ as well as the value function $u_{b^\dagger}$ along with suboptimal expected NPVs $u_{b}$ for $b \neq b^\dagger$.  Here $\hat{f}$ always starts at a negative value ($1-\beta$) and increases monotonically; its root becomes $b^\dagger$.  The function $u_{b^\dagger}$ is confirmed to dominate $u_b$ uniformly in $x$.

\begin{figure}[htbp]
\begin{center}
\begin{minipage}{1.0\textwidth}
\centering
\begin{tabular}{cc}
 \includegraphics[scale=0.35]{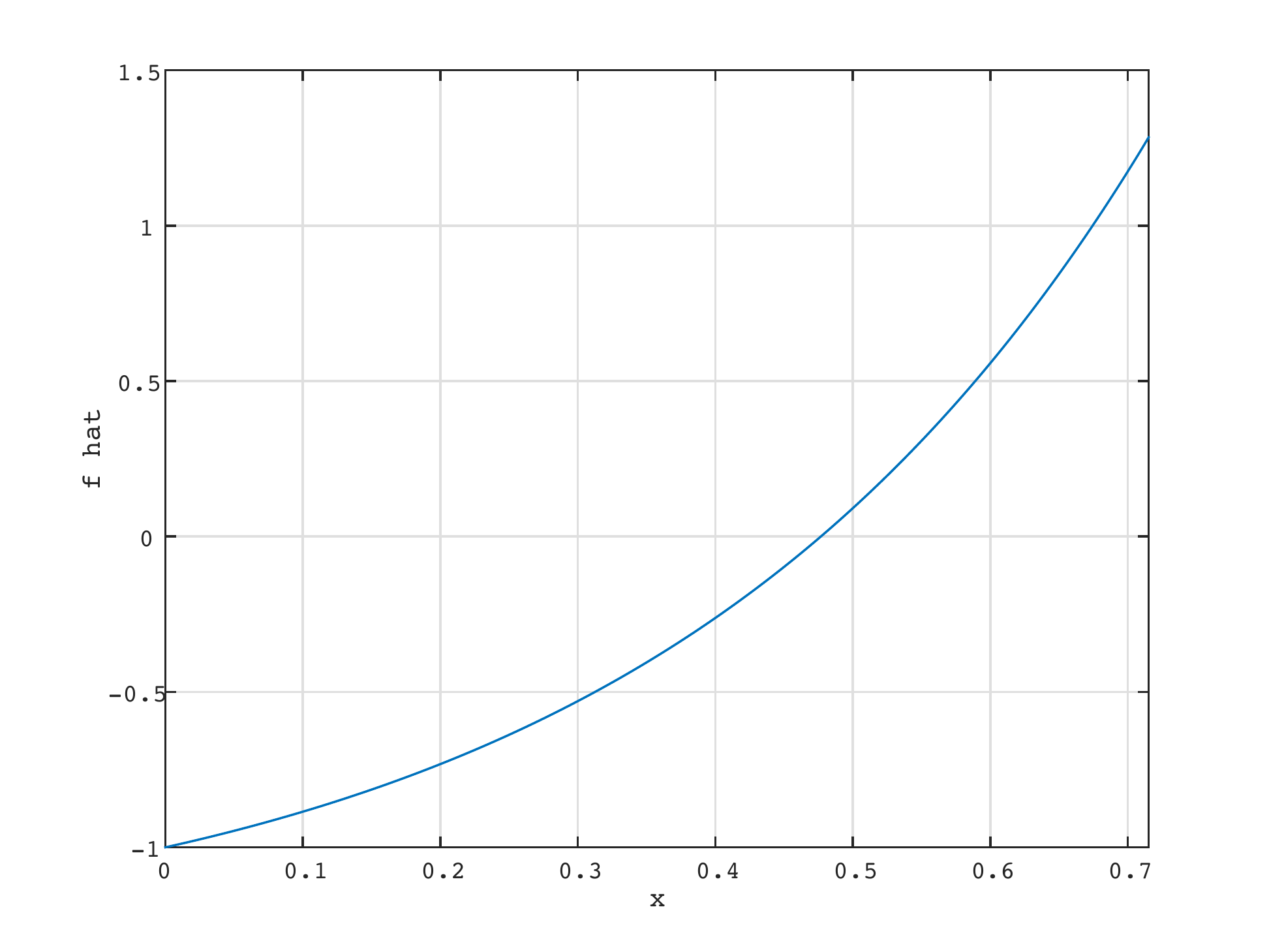} & \includegraphics[scale=0.35]{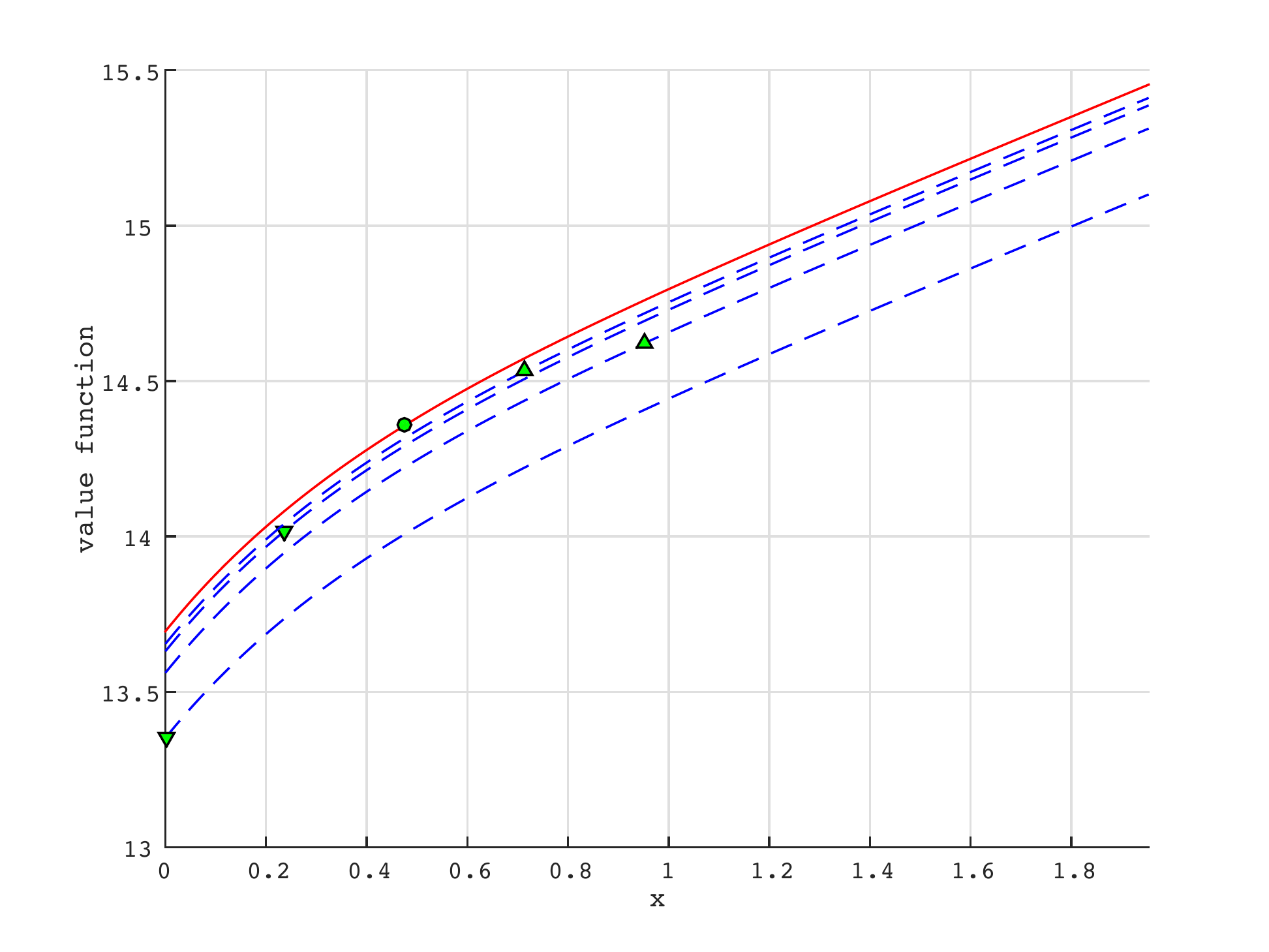}  \\
 $b \mapsto \hat{f}(b)$ & $x \mapsto u_{b^\dagger}(x)$ \end{tabular}
\end{minipage}
\caption{(Left) A plot of $\hat{f}$. (Right) The corresponding value function $u_{b^\dagger}(x)$ (solid) along with suboptimal expected NPVs $u_b$ for $b = 0, b^\dagger/2, 3 b^\dagger/2, 2 b^\dagger$ (dotted).  The values at $b^\dagger$ are indicated by circles whereas those at $b > b^\dagger$ (resp.\ $b < b^\dagger$) for suboptimal expected NPVs are indicated by up-pointing (resp.\ down-pointing) triangles.
}  \label{figure_value_function_capital}
\end{center}
\end{figure}

In Figure \ref{figure_value_function}, we show $u_{b^\dagger}$ for an increasing sequence of $r$ along with those in the classical case $\tilde{u}$; see \eqref{v_bar_a_bailout} and the discussion given in Remark \ref{remark_convergence_r_capital_injection}. 
 Again, the convergence of $u_{b^\dagger}$ and $b^\dagger$ to the classical case $\tilde{u}$ and $\tilde{b}^\dagger$ can be confirmed.

\begin{figure}[htbp]
\begin{center}
\begin{minipage}{1.0\textwidth}
\centering
\begin{tabular}{c}
 \includegraphics[scale=0.35]{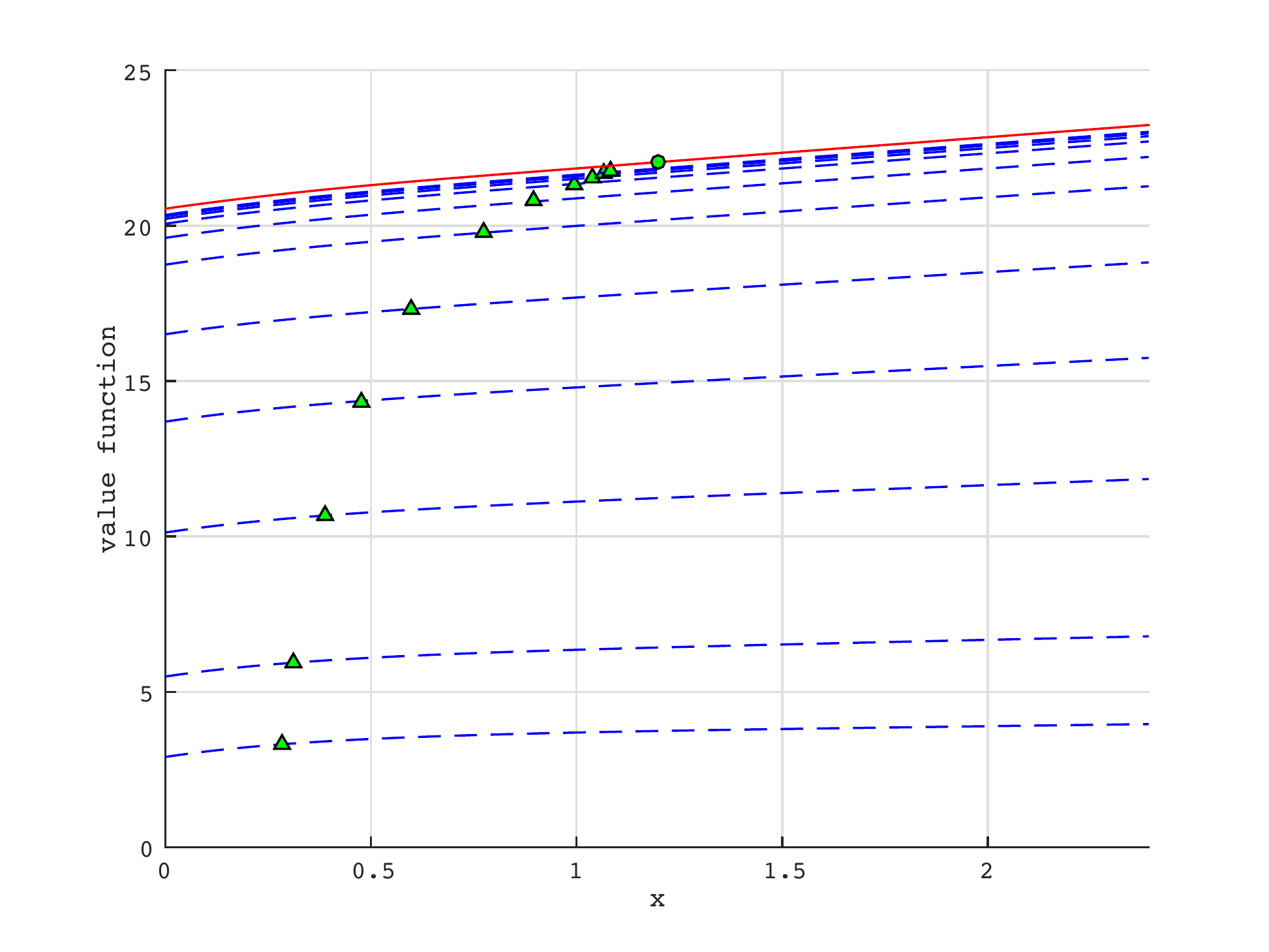}  \end{tabular}
\end{minipage}
\caption{The value functions $u_{b^\dagger}$ (dotted) for $r = 0.01$, $0.02$, $0.05$,  $0.1$, $0.2$, $0.5$, $1$, $2$, $3$, $4$, $5$ along with the value function $\tilde{u}$ (given in \eqref{v_bar_a_bailout}) in the classical case (solid).  The up-pointing triangles show the points at $b^\dagger$ of $u_{b^\dagger}$; the circle shows the point at $\tilde{b}^\dagger$ of $\tilde{u}$.
} \label{figure_value_function}
\end{center}
\end{figure}

\section{Concluding remarks} \label{section_conclusion} 

In this paper, we studied optimal dividend problems under the condition that dividend payments can be made only at the jump times of an independent Poisson process.  We showed the optimality of a periodic barrier strategy, which pushes the surplus down to a certain barrier at each dividend payment opportunity.  We also studied the case with additional classical bailouts. For both problems, the solution can be written concisely in terms of the scale function.

We conclude the paper with discussion on two potential extensions of these problems.

First, it is of interest to consider the case where solvency is only monitored periodically (not continuously).  In particular, for the case it is monitored at the arrival times of an independent Poisson process, the ruin time, or the first arrival time at which surplus is negative, corresponds to what is known as a special case of Parisian ruin; this occurs as soon as the surplus process stays below zero consecutively for an exponential time.  It is reasonably conjectured in this case that the periodic barrier strategy is again optimal.

Second, in view of our second problem with classical bail-outs, it makes more sense to consider the case bail-outs are also periodic. 
This can be modeled by introducing jump times of another independent Poisson process, say $M$ with rate $r'$, at which capital injection can be made.  In this case, ruin must be considered and hence the problem is terminated at the first down-crossing time at zero (as opposed to $\infty$).  It is reasonably conjectured that it is optimal to reflect the process in a Parisian fashion from above at $\overline{b}$ and from below at $\underline{b}$ for suitable barriers $0 \leq \underline{b} \leq \overline{b}$. Here $\underline{b} = 0$ can happen as it may be optimal not to inject capital at all when its unit cost is high; in this case, the value function becomes that in our first problem.

Under a suitable condition, we expect, as the frequency of capital injection opportunities $r'$ increases to infinity, the lower barrier $\underline{b}$ decreases to zero and the results converge to the results in our second problem (or otherwise our first problem if it is better not to bail out at all). 
 In view of the numerical results in Figure \ref{fig_rho}, for the dividend maximization in the current paper, the optimal solution converges quickly to the classical case as $r$ increases. Because similar convergence is expected for capital injection, the value function of our second problem can be seen as a rough approximation given that the frequency of capital injection opportunities is high.

\section*{Acknowledgments}
J. L. P\'erez  is  supported  by  CONACYT,  project  no.\ 241195.
K. Yamazaki is in part supported by MEXT KAKENHI grant no.\  26800092 and 17K05377.

\appendix
\section{Proof of Lemma \ref{verificationlemma}}\label{Appendix_A}
By the definition of $v$ as a supremum, it follows that $v_{\hat{\pi}}(x)\leq v(x)$ for all $x\geq0$. We write $w:=v_{\hat{\pi}}$ and show that $w(x)\geq v_\pi(x)$ for all $\pi\in\mathcal{A}$ for all $x\geq0$. 

Fix $\pi\in \mathcal{A}$ and the corresponding surplus process $U^\pi$. 
Let $(T_n)_{n\in\mathbb{N}}$ be the sequence of stopping times defined by $T_n :=\inf\{t>0:U^\pi (t) > n \text{ or }  U^\pi(t)<1/n\}$. 
Since $U^\pi$ is a semi-martingale and $w$ is sufficiently smooth on $(0, \infty)$, we can use the change of variables/It\^o's formula 
(cf.\ Theorems II.31 and II.32 of \cite{protter})  to the stopped process $(\mathrm{e}^{-q(t\wedge T_n)}w(U^\pi(t\wedge T_n)); t \geq 0)$ to deduce under $\mathbb{P}_x$ that
\begin{equation*}
\label{impulse_verif_1}
\begin{split}
\mathrm{e}^{-q(t\wedge T_n)}w(U^\pi(t\wedge T_n))&-w(x)
=  -\int_{0}^{t\wedge T_n}\mathrm{e}^{-qs} q w(U^\pi(s-)) \mathrm{d}s
+\int_{[0, t\wedge T_n]}\mathrm{e}^{-qs}w'(U^\pi(s-)) \mathrm{d} X(s)  \\
&+ \frac{\sigma^2}{2}\int_0^{t\wedge T_n}\mathrm{e}^{-qs}w''(U^\pi(s-))\mathrm{d}s+\sum_{0 \leq s\leq t\wedge T_n}\mathrm{e}^{-qs}[\Delta w(U^\pi(s-)-\nu^{\pi}(s))\Delta N^{r}(s) ] \\
& + \sum_{0 \leq s\leq t\wedge T_n}\mathrm{e}^{-qs}[\Delta w(U^\pi(s-)+\Delta X(s))-w'(U^\pi(s-))  \Delta X(s)  ].
\end{split}
\end{equation*}
Rewriting the above equation leads to 
\begin{equation*}
\begin{split}
\mathrm{e}^{-q(t\wedge T_n)}&w(U^\pi(t\wedge T_n))  -w(x)
\\
= &  \int_{0}^{t\wedge T_n}\mathrm{e}^{-qs}   (\mathcal{L}-q)w(U^\pi(s-))   \mathrm{d}s
-\int_{[0,t\wedge T_n]}\mathrm{e}^{-qs}\nu^{\pi}(s)\mathrm{d}N^{r}(s)  \\
&+\int_0^{t\wedge T_n}\mathrm{e}^{-qs}r\left[\nu^\pi(s)+w(U^\pi(s-)-\nu^{\pi}(s))-w(U^\pi(s-))\right] \mathrm{d}s  + M(t \wedge T_n)
\end{split}
\end{equation*}
where 
\begin{align}\label{def_M_martingale}
\begin{split}
M(t) &:= \int_0^t \sigma  \mathrm{e}^{-qs} w'(U^{\pi}(s-)) \diff B(s) +\lim_{\varepsilon\downarrow 0}\int_{[0,t]} \int_{(\varepsilon,1)}  \mathrm{e}^{-qs}w'(U^{\pi}(s-))y (\mathcal{N}(\diff s\times \diff y)-\Pi(\diff y) \diff s)\\
&+\int_{[0,t]} \int_{(0,\infty)} \mathrm{e}^{-qs} \big[ w(U^\pi(s-)+y)-w(U^\pi(s-))-w'(U^\pi(s-))y\mathbf{1}_{\{y\in (0, 1)\}} \big] (\mathcal{N}(\diff s\times \diff y)-\Pi(\diff y) \diff s)\\
&+\int_{[0,t]} \mathrm{e}^{-qs}\left[\nu^\pi(s)+w(U^\pi(s-)-\nu^{\pi}(s))-w(U^\pi(s-))\right] \mathrm{d} (N^r(s)-rs), \quad t \geq 0, 
\end{split}
\end{align}
where $( B(s); s \geq 0 )$ is a standard Brownian motion and $\mathcal{N}$ 
 is a Poisson random measure in   the measure space  $([0,\infty)\times (0, \infty),\B [0,\infty)\times \B (0, \infty), \diff s \times \Pi( \diff x))$.
\par Hence we derive that, by \eqref{surplus_constraint},
\begin{equation*}
\begin{split}
w(x) \geq &
-\int_{0}^{t\wedge T_n}\mathrm{e}^{-qs}  \Big[ (\mathcal{L}-q)w(U^\pi(s-))+r\max_{0\leq l \leq U^\pi(s-)}\left\{ l+w(U^\pi(s-)-l)-w(U^\pi(s-))\right\} \Big]  \mathrm{d}s\\
& + \int_{[0, t\wedge T_n]}\mathrm{e}^{-qs}\nu^{\pi}(s)\mathrm{d}N^{r}(s) - M(t\wedge T_n) + \mathrm{e}^{-q(t\wedge T_n)}w(U^\pi(t\wedge T_n)).
\end{split}
\end{equation*}
Using  the assumption \eqref{HJB-inequality}, we have
\begin{equation} \label{w_lower}
\begin{split}
w(x) \geq &\int_{[0,t\wedge T_n]}\mathrm{e}^{-qs}\nu^{\pi}(s)\mathrm{d}N^{r}(s) + \mathrm{e}^{-q(t\wedge T_n)}w(U^\pi(t\wedge T_n)) - M(t\wedge T_n).
\end{split}
\end{equation} 
In addition by the fact that $w\in C^2(0,\infty)$, and that $U^\pi(s-)$ is bounded a.s.\ on $[0, t \wedge T_n]$, the compensation formula (cf.\ Corollary 4.6 of \cite{K}) implies that the process $(M(t \wedge T_n); t\geq0 )$ is a zero-mean $\mathbb{P}_x$-martingale.

\par Now  taking expectations  in \eqref{w_lower} and
letting $t$ and $n$ go to infinity ($T_n \xrightarrow{n \uparrow \infty} \tau_0^\pi$ $\mathbb{P}_x$-a.s.), Fatou's lemma (noting that $w$ is bounded from below by $\rho \wedge 0$) and \eqref{v_0_continuity} give  
\begin{equation*}
w(x) \geq \lim_{t, n \uparrow \infty} \mathbb{E}_x \left( \int_{[0, t\wedge T_n]}\mathrm{e}^{-qs}\nu^{\pi}(s)\mathrm{d}N^{r}(s) + \mathrm{e}^{-q(t\wedge T_n)}w(U^\pi(t\wedge T_n)) \right) \geq v_\pi(x).
\end{equation*}
This completes the proof.

\section{Proof of Lemma \ref{verificationlemma_2}}\label{Appendix_B}
	
	By the definition of $u$ as a supremum, it follows that $u_{\hat{\pi}}(x)\leq u(x)$ for all $x\geq0$. We write $w:=u_{\hat{\pi}}$ and show that $w(x)\geq u_{\bar{\pi}}(x)$ for all $\bar{\pi}\in\bar{\mathcal{A}}$ and $x\geq0$.  By $\beta > 1$ and the constraint that $U^{\bar{\pi}} \geq 0$, we can focus on $\bar{\pi}$ such that \eqref{surplus_constraint} holds with $\pi$ replaced with $\bar{\pi}$. 
	
	Fix $\bar{\pi}\in \bar{\mathcal{A}}$ and $U^{\bar{\pi}}$ the corresponding surplus process. 
Let $(T_n)_{n\in\mathbb{N}}$ be the sequence of stopping times defined by $T_n :=\inf\{t>0: U^{\bar{\pi}}(t)>n\}$. 
	Since $U^{\bar{\pi}}$ is a semi-martingale and $w$ is sufficiently smooth on $(0, \infty)$ and differentiable at zero by assumption, we can use the change of variables/Meyer-It\^o's formula (cf.\ Theorems II.31 and II.32 of \cite{protter})  to the stopped process $(\mathrm{e}^{-q(t\wedge T_n)}w(U^{\bar{\pi}}(t\wedge T_n)); t \geq 0)$ to deduce under $\mathbb{P}_x$ that, with $R^{\bar{\pi},c}$ the continuous part of $R^{\bar{\pi}}$,
	\begin{equation*}
		\label{impulse_verif_1}
		\begin{split}
			\mathrm{e}^{-q(t\wedge T_n)}w(U^{\bar{\pi}}(t\wedge T_n)&)-w(x)
			=  -\int_{0}^{t\wedge T_n}\mathrm{e}^{-qs} q w(U^{\bar{\pi}}(s-)) \mathrm{d}s
			+\int_{[0, t\wedge T_n]}\mathrm{e}^{-qs}w'(U^{\bar{\pi}}(s-)) \mathrm{d} X(s)  \\
			&+ \int_0^{t\wedge T_n} \mathrm{e}^{-qs}w'(U^{\bar{\pi}}(s-)) \mathrm{d} R^{\bar{\pi},c}(s) + \sum_{0 \leq s\leq t\wedge T_n}\mathrm{e}^{-qs}[\Delta w(U^{\bar{\pi}}(s-)+\Delta R^{\bar{\pi}}(s))] \\
			&+ \frac{\sigma^2}{2}\int_0^{t\wedge T_n}\mathrm{e}^{-qs}w''(U^{\bar{\pi}}(s-))\mathrm{d}s+\sum_{0 \leq s\leq t\wedge T_n}\mathrm{e}^{-qs}[\Delta w(U^{\bar{\pi}}(s-)-\nu^{\bar{\pi}}(s))\Delta N^{r}(s) ] \\
			& + \sum_{0 \leq s\leq t\wedge T_n}\mathrm{e}^{-qs}[\Delta w(U^{\bar{\pi}}(s-)+\Delta X(s))-w'(U^{\bar{\pi}}(s-))  \Delta X(s)  ].
		\end{split}
	\end{equation*}
	Rewriting the above equation leads to 
	\begin{equation*}
		\begin{split}
			\mathrm{e}^{-q(t\wedge T_n)}&w(U^{\bar{\pi}}(t\wedge T_n))  -w(x)
			\\
			= &  \int_{0}^{t\wedge T_n}\mathrm{e}^{-qs}   (\mathcal{L}-q)w(U^{\bar{\pi}}(s-))   \mathrm{d}s
			-\int_{[0,t\wedge T_n]}\mathrm{e}^{-qs}\nu^{\bar{\pi}}(s)\mathrm{d}N^{r}(s)\\
			&+ \int_0^{t\wedge T_n}\mathrm{e}^{-qs}w'(U^{\bar{\pi}}(s-)) \mathrm{d} R^{\bar{\pi},c}(s) + \sum_{0 \leq s\leq t\wedge T_n}\mathrm{e}^{-qs}[\Delta w(U^{\bar{\pi}}(s-)+\Delta R^{\bar{\pi}}(s))] \\
			&+\int_0^{t\wedge T_n}\mathrm{e}^{-qs}r\left\{\nu^{\bar{\pi}}(s)+w(U^{\bar{\pi}}(s-)-\nu^{\bar{\pi}}(s))-w(U^{\bar{\pi}}(s-))\right\} \mathrm{d}s  + M(t \wedge T_n)
		\end{split}
	\end{equation*}
	where the process $M$ is the martingale \eqref{def_M_martingale} with $\pi$ replaced with $\bar{\pi}$. 
	\par On the other hand,  using \eqref{HJB-inequality_db2},
	we obtain that
	\begin{align*}
	\int_0^{t\wedge T_n}&\mathrm{e}^{-qs}w'(U^{\bar{\pi}}(s-)) \mathrm{d} R^{\bar{\pi},c}(s)+\sum_{0 \leq s\leq t\wedge T_n}\mathrm{e}^{-qs}[\Delta w(U^{\bar{\pi}}(s-)+\Delta R^{\bar{\pi}}(s))]\\
	&\leq \beta \int_0^{t\wedge T_n} \mathrm{e}^{-qs} \mathrm{d} R^{\bar{\pi},c}(s)+\beta\sum_{0 \leq s\leq t\wedge T_n}  {\rm e}^{-qs}\Delta R^{\bar{\pi}}(s)=\beta\int_{[0, t\wedge T_n]}\mathrm{e}^{-qs} \mathrm{d} R^{\bar{\pi}}(s).
	\end{align*}
	\par This together with \eqref{surplus_constraint} with $\pi$ replaced with $\bar{\pi}$ gives 
	\begin{equation*}
		\begin{split}
			&w(x) \geq 
			-\int_{0}^{t\wedge T_n}\mathrm{e}^{-qs}  \Big[ (\mathcal{L}-q)w(U^{\bar{\pi}}(s-))+r\max_{0\leq l \leq U^{\bar{\pi}}(s-)}\left(l+w(U^{\bar{\pi}}(s-)-l)-w(U^{\bar{\pi}}(s-))\right) \Big]  \mathrm{d}s\\
			& +\int_{[0, t\wedge T_n]}\mathrm{e}^{-qs}\nu^{\bar{\pi}}(s)\mathrm{d}N^{r}(s)-\beta\int_{[0, t\wedge T_n]}\mathrm{e}^{-qs} \mathrm{d} R^{\bar{\pi}}(s) - M(t\wedge T_n) + \mathrm{e}^{-q(t\wedge T_n)}w(U^{\bar{\pi}}(t\wedge T_n)).
		\end{split}
	\end{equation*}
	Using  the assumptions \eqref{HJB-inequality_db} and \eqref{HJB-inequality_db3},  we have
	\begin{equation} \label{w_lower_2}
	\begin{split}
	w(x) \geq &
	- M(t\wedge T_n)+ \int_{[0, t\wedge T_n]}\mathrm{e}^{-qs}\nu^{\bar{\pi}}(s)\mathrm{d}N^{r}(s)-\beta\int_{[0, t\wedge T_n]}\mathrm{e}^{-qs} \mathrm{d} R^{\bar{\pi}}(s)-m\mathrm{e}^{-q(t\wedge T_n)}.
	\end{split}
	\end{equation} 
	\par Now  taking expectations  in \eqref{w_lower_2} (recall that $M(\cdot \wedge T_n)$ is a zero-mean martingale) and
	letting $t$ and $n$ go to infinity ($T_n\nearrow\infty$ $\mathbb{P}_x$-a.s.), the assumption \eqref{admissibility2} and the monotone convergence theorem give  
	\begin{equation*}
		w(x) \geq \mathbb{E}_x \left( \int_{[0,t\wedge T_n]}\mathrm{e}^{-qs}\nu^{\bar{\pi}}(s)\mathrm{d}N^{r}(s)-\beta\int_{[0, t\wedge T_n]}\mathrm{e}^{-qs} \mathrm{d} R^{\bar{\pi}}(s) \right) =u_{\bar{\pi}}(x).
	\end{equation*}
	This completes the proof.
	

	\end{document}